\newtheorem{theorem}{Theorem}[section]
\numberwithin{equation}{section}
\newtheorem{proposition}[equation]{Proposition}
\newtheorem{corollary}[equation]{Corollary}
\newtheorem{remark}[equation]{Remark}
\newtheorem{lemma}[equation]{Lemma}
\titleformat{\section}{\normalfont\scshape\centering}{\thesection.}{0.5em}{}
\titleformat*{\subsection}{\itshape}
\titleformat*{\subsubsection}{\itshape}
\providecommand{\keywords}[1]
{
	{\small\hspace{-5mm}\textit{Keywords:} #1}
}
\providecommand{\MSC}[1]
{
	{\small\hspace{-5mm}\textit{AMS MSC (2020):~~} #1}
}
\providecommand{\jumptmp}[2]{#1\llbracket{#2}#1\rrbracket}
\providecommand{\jump}[1]{\jumptmp{}{#1}}
	\def\MR#1{}
\newcommand{\AAA}{\boldsymbol{\mathcal{A}}}
\begin{document}
	\setlength{\abovedisplayskip}{5.5pt}
	\setlength{\belowdisplayskip}{5.5pt}
	\setlength{\abovedisplayshortskip}{5.5pt}
	\setlength{\belowdisplayshortskip}{5.5pt}

	\title{\vspace{-5mm}Error analysis for a Crouzeix--Raviart approximation of the variable exponent Dirichlet~problem}
	\author[1]{Anna Kh.\ Balci\thanks{Email: \texttt{akhripun@math.uni-bielefeld.de}
	
	This research is Funded by the Deutsche Forschungsgemeinschaft (DFG, German Research Foundation) – Project-ID 317210226 – SFB 1283  and   by Charles University  PRIMUS/24/SCI/020 and Research Centre program No. UNCE/24/SCI/005.}}
	\author[2]{Alex Kaltenbach\thanks{Email: \texttt{kaltenbach@math.tu-berlin.de\vspace*{-4mm}}}}
	\date{\today}
	\affil[1]{\small{Department of Mathematical Analysis, Charles University in Prague,  Sokolovska 83, 186 75 Praha 8, Czech Republic 
 and Faculty of Mathematics, University of Bielefeld, Universitätsstr.~25, 33615 Bielefeld }}
	\affil[2]{\small{Institute of Mathematics, Technical University of Berlin, Stra\ss e des 17.\ Juni 135\\ 10623 Berlin}}
	\maketitle

	\pagestyle{fancy}
	\fancyhf{}
	\fancyheadoffset{0cm}
	\addtolength{\headheight}{-0.25cm}
	\renewcommand{\headrulewidth}{0pt} 
	\renewcommand{\footrulewidth}{0pt}
	\fancyhead[CO]{\textsc{CR for the variable exponent Dirichlet problem}}
	\fancyhead[CE]{\textsc{A. Kh.\ Balci and A. Kaltenbach}}
	\fancyhead[R]{\thepage}
	\fancyfoot[R]{}
	
	\begin{abstract}
		In the present paper, we examine a Crouzeix--Raviart approximation of the $p(\cdot)$-Dirichlet problem. We derive a \textit{medius} error estimate, \textit{i.e.}, a best-approximation result, which holds  for uniformly continuous  exponents and implies \textit{a priori} error estimates, which~apply~for~Hölder continuous exponents and are optimal for Lipschitz continuous exponents. 
        Numerical experiments are carried out to review the theoretical findings.
	\end{abstract}

	\keywords{\hspace{-0.1mm}$p(\cdot)$-Dirichlet \hspace{-0.1mm}problem; \hspace{-0.1mm}Crouzeix--Raviart \hspace{-0.1mm}element; \hspace{-0.1mm}\textit{a \hspace{-0.1mm}priori} \hspace{-0.1mm}error \hspace{-0.1mm}analysis;~\hspace{-0.1mm}\textit{medius}~\hspace{-0.1mm}error~\hspace{-0.1mm}\mbox{analysis}.}
	\MSC{\hspace{-0.1mm}49M29; 65N15; 65N30; 35J60; 46E30.}

	\section{Introduction}\label{sec:intro}
    \thispagestyle{empty}
  	
  	\hspace{5mm}We examine the numerical approximation of a non-linear system of \textit{$p(\cdot)$-Dirichlet type}, \textit{i.e.},\enlargethispage{7mm}
  	\begin{align}\label{eq:pDirichlet}
  	 \begin{aligned}
  	 -\mathrm{div}(\AAA(\cdot,\nabla u))&=f&&\quad\textup{ in }\Omega\,,\\
  	 u&= 0&&\quad\textup{ on }\Gamma_D\,,\\
  	  \AAA(\cdot,\nabla u)\cdot n &= 0&&\quad\textup{ on }\Gamma_N\,,
  	 \end{aligned}
  	\end{align}
 using the Crouzeix--Raviart element (\textit{cf.}\ \cite{CR73}). More precisely, for given $f\hspace{-0.15em}\in\hspace{-0.15em} L^{p'(\cdot)}(\Omega)$,~where~$p'(x)\hspace{-0.15em}\coloneqq\hspace{-0.15em} \smash{\frac{p(x)}{p(x)-1}}$ for all $x\hspace{-0.15em}\in\hspace{-0.15em} \Omega$, and $p\hspace{-0.15em}\in\hspace{-0.15em} C^0(\overline{\Omega})$ with $p^-\hspace{-0.15em}\coloneqq\hspace{-0.15em}\min_{x\in \overline{\Omega}}{p(x)}\hspace{-0.15em}>\hspace{-0.15em}1$, we seek $u\hspace{-0.15em} \in\hspace{-0.15em} {W^{1,p(\cdot)}_D(\Omega)}$~\mbox{solving}~\eqref{eq:pDirichlet}. Here, $\Omega \subseteq \mathbb{R}^d$, ${d \in \mathbb{N}}$, is a bounded simplicial Lipschitz domain whose topological boundary $\partial\Omega$ is disjointly  divided into a Dirichlet part $\Gamma_D$ and a Neumann part  $\Gamma_N$, and ${W^{1,p(\cdot)}_D(\Omega)}\coloneqq \{v\in \smash{W^{1,p(\cdot)}(\Omega)}\mid \textup{tr}\, v=0\textup{ a.e.\ on }\Gamma_D\}$. The  non-linear operator $\AAA\colon \Omega\times\mathbb{R}^d\to\mathbb{R}^d$, for every $(x,a)^\top\in \Omega\times\mathbb{R}^d$, is defined by
  	\begin{align}\label{example}
  	 \AAA(x,a)\coloneqq (\delta+\vert a\vert)^{p(x)-2}a\,.
  	\end{align}
   Since $\AAA\colon\Omega\times \mathbb{R}^d\to \mathbb{R}^d$ possesses a potential with respect~to~its~second~component, \textit{i.e.}, there exists a  function $\varphi\colon \Omega\times\mathbb{R}_{\ge 0}\to \mathbb{R}_{\ge 0}$, which is strictly convex with respect to its~second~component, such~that $(\partial_a\varphi)(x,a)=\AAA(x,a)$ 
   for all $(x,a)^\top\in\Omega\times \mathbb{R}^d$, each solution  $u\in  {W^{1,p(\cdot)}_D(\Omega)}$ of \eqref{eq:pDirichlet} is the unique minimizer of the functional ${I\colon {W^{1,p(\cdot)}_D(\Omega)}\to \mathbb{R}_{\ge 0}}$, for every $v\in{ W^{1,p(\cdot)}_D(\Omega)}$~defined~by\enlargethispage{5mm}
        \begin{align}
            I(v)\coloneqq \int_{\Omega}{\varphi(\cdot,\vert \nabla v\vert)\,\mathrm{d}x}-\int_{\Omega}{f\,v\,\mathrm{d}x}\,,\label{eq:pDirichletMin}
        \end{align}
        and vice-versa, leading to a primal and a dual formulation of \eqref{eq:pDirichlet}, and to convex~duality~relations.
        
The $p(\cdot)$-Dirichlet problem \eqref{eq:pDirichlet} is a prototypical example of a non-linear system with variable growth conditions. Regularity results for this models go back to the seminal papers by Acerbi and Mingione,\cite{AM01,AM011}. It  appears in physical models for \textit{smart fluids}, \textit{e.g.},~\mbox{electro-rheological~fluids} (\textit{cf.}\ \cite{RR96,Ru00}), micro-polar electro-rheological fluids (\textit{cf.}\ \cite{winr,eringenbook}), chemically reacting~fluids~(\textit{cf.}~\cite{LKM78,HMPR10}), and thermo-rheological fluids (\textit{cf.}\ \cite{Z97,AR06}).  In these models, the variable exponent depends on~physical quantities such as, \textit{e.g.}, an electric field, a concentration field or a~temperature~field. In addition, the $p(\cdot)$-Dirichlet problem \eqref{eq:pDirichlet} has applications in the field of image reconstruction (\textit{cf.}\ \cite{AMS08,CLR06,LLP10}). The main difficulty in the treatment of  models of type \eqref{eq:pDirichlet} is the non-autonomous structure (\textit{i.e.}, the dependency on the space variable).   In general,~without~additional assumptions on the space variable, there can be difficulties in the numerical treatment~of~such~problems.~In~this paper, we consider the situation with additional regularity assumption on the variable exponent, \textit{i.e.}, we assume that $p\in C^{0, \alpha}(\overline{\Omega})$ for some $\alpha\in (0,1]$. The obtained convergence rate result is new for  the non-conforming~\mbox{Crouzeix--Raviart}~method. 
\begin{remark}The essential feature of such models is the possibility of so-called energy gap (or Lavrentiev phenomenon), which could lead to the disconvergence of  conforming schemes to the global minimizer of the problem.  In comparison to the Lagrange elements, non-conforming methods, in particular, the Crouzeix--Raviart method, converges, see, e.g., \cite{BalDieSto22}. \end{remark}

   \subsection{Related contributions}\vspace{-1mm}

    \hspace{5mm}The numerical approximation of \eqref{eq:pDirichlet} in the case of a constant exponent $p\in (1,\infty)$ has already been subject of numerous contributions: 
    the best approximation property in terms of the natural distance 
  has first been established in \cite{BarLiu94}. This has been extended in \cite{DieRuz07} to the functionals with Orlicz growth. \textit{A priori} error estimates in terms of the mesh-size were obtained in~\cite{EbmLiu05,DieRuz07}. This was done in \cite{DieRuz07} by extending the approximation properties of standard interpolation operators to Orlicz spaces.  Results for generalized Newtonian~fluids~can~be~found~\cite{ST20}.    The~$p$-Dirichlet problem with Discontinuous Galerkin (DG) methods was studied in~\cite{BE08} and 
\cite{BO09}; for~the~Orlicz~problems~in~\cite{DKRT14}.~In~\cite{BelDieKre12},  it has been shown that the  adaptive finite element method with element-wise affine functions and D\"orfler marking converges with quasi-optimal rates. As the problem is non-linear, it has to be solved by iterative methods. A \hspace{-0.1mm}stable \hspace{-0.1mm}procedure, \hspace{-0.1mm}that \hspace{-0.1mm}is  \hspace{-0.1mm}efficient \hspace{-0.1mm}in \hspace{-0.1mm}the \hspace{-0.1mm}experiments,~\hspace{-0.1mm}was~\hspace{-0.1mm}introduced \hspace{-0.1mm}in \hspace{-0.1mm}\cite{DieForTomWan20}~\hspace{-0.1mm}for~\hspace{-0.1mm}the~\hspace{-0.1mm}case~\hspace{-0.1mm}${p\hspace{-0.1em}<\hspace{-0.1em}2}$ and in~\cite{BalDieSto22} for the case~$p>2$. In~\cite{K22CR}, an  error analysis for a Crouzeix--Raviart approximation of the $p$-Dirichlet problem was carried out, including optimal \textit{a priori}~error estimates,~a~\textit{medius}~error estimate, \textit{i.e.}, a best-approximation~result,~and~\textit{a~posteriori}~error~estimates. 
  
    However, there exist only a few contributions in the case of a variable exponent $p\in C^0(\overline{\Omega})$. The paper \cite{CHP10} is concerned with the (weak) convergence of a conforming, discretely inf-sup stable finite element approximation of the model for electro-rheological fluids. The first contribution addressing \textit{a priori} error estimates for finite element~approximations~of~\eqref{eq:pDirichlet}~can~be~found~in~\cite{BDS15}; see also \cite{BBD15}, for a extension to the model for electro-rheological fluids.
    
    In \cite{DPLM13}, the (weak) convergence of DG type methods is studied; the result~contains~no~convergence~rates.   In \cite{KPS18,KS19}, the (weak) convergence of a conforming, discretely inf-sup stable finite element approximation of the model for chemically reacting 
    fluids~is~proved.  In  \cite{BalOrtSto22}, the weak convergence of an approximation using the Crouzeix--Raviart element was established.
    To the best of the authors' knowledge, however, no \textit{a priori} error analysis for \eqref{eq:pDirichlet} for approximations using non-conforming ansatz~classes --in particular, the Crouzeix--Raviart element, which is usually the first step towards a fully-non-conforming \textit{a priori} error analysis-- has been carried~out~yet.\enlargethispage{7.5mm}

   \subsection{New contributions}\vspace{-1mm}

    \hspace{5mm}Deriving local efficiency estimates in terms of shifted $N$-functions and deploying the so-called node-averaging quasi-interpolation operator (\textit{cf.} \cite{Osw93,Sus96}), we generalize the \textit{medius} error analysis in 
    \cite{Bre15} from $p=2$ and $\delta=0$ in \eqref{example}, \textit{i.e.}, $\AAA=\textup{id}_{\mathbb{R}^d}\colon \mathbb{R}^d\to \mathbb{R}^d$, and of \cite{K22CR} from $p\in (1,\infty)$ and $\delta\ge 0$ in \eqref{example}
    to variable exponents $p\in C^0(\overline{\Omega})$ with $p^->1$.
    This \textit{medius} error analysis implies that the performances of the 
    conforming Lagrange finite element method applied to \eqref{eq:pDirichlet} and the non-conforming Crouzeix--Raviart finite element method applied~to~\eqref{eq:pDirichlet}~are~comparable.~As~a~result, we obtain \textit{a priori} error estimates for the approximation of \eqref{eq:pDirichlet}
    using the
    Crouzeix--Raviart element, which apply for Hölder continuous variable exponents 
    $p\in C^{0,\alpha}(\overline{\Omega})$, where $\alpha\in (0,1]$ and $p^->1$, and $\delta\ge  0$, and are quasi-optimal for Lipschitz continuous variable exponents $p\in C^{0,1}(\overline{\Omega})$~and~$\delta> 0$. Since $\AAA\colon\Omega\times \mathbb{R}^d\to \mathbb{R}^d$ has a potential and, therefore, \eqref{eq:pDirichlet} admits an equivalent formulation as a convex minimization problem  (\textit{cf}.\ \eqref{eq:pDirichletMin}),  we have access to a (discrete) convex duality theory (\textit{cf}.\ \cite{LLC18,Bar21,BK22B}) and \eqref{eq:pDirichlet} as well as the approximation of \eqref{eq:pDirichlet} employing the Crouzeix--Raviart  element admit dual formulations with a dual solution as well as a discrete~dual~solution,~respectively. We derive \textit{a priori} error estimates for the dual solution and the discrete~dual~solution.\vspace{-1mm}

  \subsection{Outline}\vspace{-1mm}

  \hspace{5mm}\textit{This article is organized as follows:} \!In Section \ref{sec:preliminaries}, we introduce the employed notation,~the~\mbox{basic} properties of non-linear operator $\AAA\colon\Omega\times\mathbb{R}^d\to \mathbb{R}^d$ and its  consequences, the relevant finite element spaces, and give brief review of the continuous and the discrete $p(\cdot)$-Dirichlet~problem. 
   In~Section~\ref{sec:medius}, we establish a \textit{medius} error analysis, \textit{i.e.}, best-approximation result, for the Crouzeix--Raviart finite element method applied to \eqref{eq:pDirichlet}.
   In Section \ref{sec:a_priori}, by~means~of~this~\textit{medius}~error~analysis, we derive \textit{a priori} error estimates for the  Crouzeix--Raviart finite element method applied~to~\eqref{eq:pDirichlet}, which are optimal for all Lipschitz continuous exponents $p\in C^{0,1}(\overline{\Omega})$ with $p^->1$ and all $\delta>0$. 
   In Section \ref{sec:experiments}, we review our theoretical findings via numerical experiments.

	\section{Preliminaries}\label{sec:preliminaries}\enlargethispage{7.5mm}
	
	\qquad Throughout the article, let ${\Omega\subseteq \mathbb{R}^d}$,~${d\in\mathbb{N}}$, always be a bounded simplicial Lipschitz domain, whose topological boundary $\partial\Omega$ is disjointly divided into a (relatively) closed Dirichlet part $\Gamma_D$, for which we  assume that $\vert \Gamma_D\vert>0$, and~a~Neumann~part~$\Gamma_N$,~\textit{i.e.}, ${\partial\Omega=\Gamma_D\cup\Gamma_N}$~and~${\emptyset=\Gamma_D\cap\Gamma_N}$. 
 The integral mean of an integrable function $f\colon \hspace*{-0.1em}M\hspace*{-0.1em}\to\hspace*{-0.1em}\mathbb{R}$ over a (Lebesgue)~\mbox{measurable}~set~${M\hspace*{-0.1em}\subseteq\hspace*{-0.1em} \mathbb{R}^d}$, ${d\hspace*{-0.1em}\in\hspace*{-0.1em} \mathbb{N}}$, with $\vert M\vert>0$, is denoted by $\langle f\rangle_M\coloneqq \smash{\frac 1 {|M|}\int_M f \,\textup{d}x}$. For (Lebesgue) measurable functions ${f,g\colon M\to \mathbb{R}}$ and a (Lebesgue) measurable set $ M\hspace*{-0.1em}\subseteq \hspace*{-0.1em}\mathbb{R}^d$, $d\hspace*{-0.1em}\in\hspace*{-0.1em} \mathbb{N}$,~we~write~${(f,g)_M\hspace*{-0.1em}\coloneqq \hspace*{-0.1em}\int_M f g\,\textup{d}x}$, whenever the  right-hand side is well-defined. We employ the notation~$\wedge$ and $\vee$, for the minimum and maximum, respectively.

 \subsection{Variable Lebesgue spaces and variable Sobolev spaces}

 \qquad Let $M\subseteq \mathbb{R}^d$, $d\in \mathbb{N}$, be a (Lebesgue) measurable set 
and $p\colon M\to [1,+\infty]$  (Lebesgue)~measurable; a \hspace{-0.1mm}\textit{variable
\hspace{-0.1mm}exponent}. \hspace{-0.1mm}By \hspace{-0.1mm}$\mathcal{P}(M)$, \hspace{-0.1mm}we \hspace{-0.1mm}denote \hspace{-0.1mm}the \hspace{-0.1mm}\textit{set \hspace{-0.1mm}of \hspace{-0.1mm}variable~exponents}.~\hspace{-0.1mm}Then,~\hspace{-0.1mm}for~\hspace{-0.1mm}${p\in \mathcal{P}(M)} $, we denote by
${p^+\hspace*{-0.1em}\coloneqq \hspace*{-0.1em}\textup{ess\,sup}_{x\in M}{p(x)}}$~and~${p^-\hspace*{-0.1em}\coloneqq \hspace*{-0.1em}\textup{ess\,inf}_{x\in
 M}{p(x)}}$ its constant~\textit{limit~exponents}.~Then,~by
$\mathcal{P}^{\infty}(M)\hspace*{-0.1em}\coloneqq\hspace*{-0.1em}  \{p\hspace*{-0.1em}\in\hspace*{-0.1em}\mathcal{P}(M)\mid
p^+\hspace*{-0.1em}<\hspace*{-0.1em}\infty\}$, we denote the \textit{set of bounded variable exponents}.~For~${p\hspace*{-0.1em}\in\hspace*{-0.1em}\mathcal{P}^\infty(M)}$ and $l\hspace*{-0.1em}\in\hspace*{-0.1em} \mathbb{N}$, \hspace{-0.1mm}we \hspace{-0.1mm}denote \hspace{-0.1mm}by \hspace{-0.1mm}$L^{p(\cdot)}(M;\mathbb{R}^l)$, \hspace{-0.1mm}the \textit{\hspace{-0.1mm}variable Lebesgue \hspace{-0.1mm}space},~\hspace{-0.1mm}\textit{i.e.},~\hspace{-0.1mm}the~\hspace{-0.1mm}vector~\hspace{-0.1mm}space~\hspace{-0.1mm}of~\hspace{-0.1mm}(Lebesgue) measurable functions $v\colon M\to \mathbb{R}^l$~for~which~the~\textit{modular}~${\rho_{p(\cdot),M}(v)\coloneqq \smash{\int_{M}{\vert v\vert^{p(\cdot)}\,\mathrm{d}x}}<\infty}$.
Then,~the \textit{Luxembourg norm} $\| v\|_{p(\cdot),M}\coloneqq \inf\{\lambda> 0\mid \rho_{p(\cdot),M}(\frac{v}{\lambda})\leq 1\}$ turns $L^{p(\cdot)}(M;\mathbb{R}^l)$~into~a~\mbox{Banach}~space. 

Moreover, for $p\in\mathcal{P}^\infty(\Omega)$ and $l\in \mathbb{N}$, we define the spaces\vspace{-1mm}
		\begin{alignat}{2}
		W^{1,p(\cdot)}_D(\Omega;\mathbb{R}^l)&\coloneqq \big\{v\in L^{p(\cdot)}(\Omega;\mathbb{R}^l)&& \mid \nabla v\in L^{p(\cdot)}(\Omega;\mathbb{R}^{l\times d})\,,\; \textup{tr}\,v=0\text{ in }L^{p^-}(\Gamma_D;\mathbb{R}^l)\big\}\,,\notag\\
		W^{p(\cdot)}_N(\textup{div};\Omega)&\coloneqq \big\{y\in L^{p(\cdot)}(\Omega;\mathbb{R}^d)&& \mid \textup{div}\,y\in L^{p(\cdot)}\label{eq:spaces}(\Omega)\,,\\[-1mm]&&&\;\;\langle\textup{tr}_n\,y,v\rangle_{W^{\smash{1-\frac{1}{(p^-)'},(p^-)'}}(\partial\Omega)}=0\text{ for all }v\in W_D^{1,(p^-)'}(\Omega)\big\}\,,\notag
	\end{alignat}
$W^{1,p(\cdot)}(\Omega;\mathbb{R}^l)\coloneqq W^{1,p(\cdot)}_D(\Omega;\mathbb{R}^l)$ if $\Gamma_D=\emptyset$, and $W^{p(\cdot)}(\textup{div};\Omega)\coloneqq W^{p(\cdot)}_N(\textup{div};\Omega)$ if $\Gamma_N=\emptyset$. Here,  $\textup{tr}\colon \smash{W^{1,p(\cdot)}(\Omega;\mathbb{R}^l)}\to\smash{L^{p^-}(\partial\Omega;\mathbb{R}^l)}$ and by $
	\textup{tr}_n\colon\smash{W^{p(\cdot)}(\textup{div};\Omega)}\to W^{\smash{-\frac{1}{p^-},p^-}}(\partial\Omega)$ denote the \textit{trace operator} and the \textit{normal trace operator}, respectively. In particular, we will always omit $\textup{tr}$ and $\textup{tr}_n$.
 We write $L^{p(\cdot)}(\Omega) \coloneqq L^{p(\cdot)}(\Omega;\mathbb{R}^1)$, ${W^{1,p(\cdot)}(\Omega)\coloneqq W^{1,p(\cdot)}(\Omega;\mathbb{R}^1)}$,~and~${W^{1,p(\cdot)}_D(\Omega)\coloneqq W^{1,p(\cdot)}_D(\Omega;\mathbb{R}^1)}$.

 \subsection{(Generalized) $N$-functions}
	
	\qquad A  convex function
 $\psi\colon\hspace{-0.1em}\mathbb{R}_{\geq 0} \to \mathbb{R}_{\geq 0}$ is called an
 \textit{$N$-function},~if~${\psi(0)=0}$,~${\psi(t)>0}$~for~all~${t>0}$,
 $\lim_{t\rightarrow0} \psi(t)/t=0$, and
 $\lim_{t\rightarrow\infty} \psi(t)/t=\infty$. If, in addition, $\psi\in C^1(\mathbb{R}_{\geq 0})\cap C^2(\mathbb{R}_{> 0})$~and~${\psi''(t)\hspace{-0.1em}>\hspace{-0.1em}0}$ for \hspace{-0.1mm}all \hspace{-0.1mm}$t>0$, \hspace{-0.1mm}we \hspace{-0.1mm}call \hspace{-0.1mm}$\psi$ \hspace{-0.1mm}a \hspace{-0.1mm}\textit{regular
  \hspace{-0.1mm}$N$-function}. \hspace{-0.1mm}For \hspace{-0.1mm}a \hspace{-0.1mm}regular \hspace{-0.1mm}$N$-function \hspace{-0.1mm}${\psi \colon \hspace{-0.1em}\mathbb{R}_{\geq 0}\to \mathbb{R}_{\geq 0}}$,~\hspace{-0.1mm}we~\hspace{-0.1mm}have~\hspace{-0.1mm}that $\psi (0)=\psi'(0)=0$,
 $\psi'\colon\hspace{-0.1em}\mathbb{R}_{\geq 0} \to \mathbb{R}_{\geq 0}$ is increasing and $\lim _{t\to \infty} \psi'(t)=\infty$.~For~a~given~\mbox{$N$-function} ${\psi \colon\mathbb{R}_{\geq 0} \to \mathbb{R}_{\geq 0}}$, we define the \textit{(Fenchel) conjugate \mbox{$N$-function}} $\psi^*\colon\mathbb{R}_{\geq 0} \to \mathbb{R}_{\geq 0}$,~for~every~$t\ge 0$,~by
 ${\psi^*(t)\coloneqq \sup_{s \geq 0} (st
  -\psi(s))}$, which satisfies $(\psi^*)' =
 (\psi')^{-1}$ in $\mathbb{R}_{\ge 0}$. An $N$-function $\psi\colon \mathbb{R}_{\geq 0} \to \mathbb{R}_{\geq 0}$ satisfies the \textit{$\Delta_2$-condition}
 (in short, $\psi \hspace*{-0.15em}\in\hspace*{-0.15em} \Delta_2$), if there exists $K\hspace*{-0.15em}>\hspace*{-0.15em} 2$ such that~for~all~${t \hspace*{-0.15em}\ge\hspace*{-0.15em} 0}$,~it~holds~that ${\psi(2\,t) \hspace*{-0.1em}\leq\hspace*{-0.1em} K\, \psi(t)}$. Then, \hspace*{-0.15mm}we \hspace*{-0.15mm}denote \hspace*{-0.15mm}the
 \hspace*{-0.1mm}smallest \hspace*{-0.15mm}such \hspace*{-0.15mm}constant \hspace*{-0.15mm}by \hspace*{-0.15mm}$\Delta_2(\psi)\hspace*{-0.15em}>\hspace*{-0.15em}0$. \hspace*{-0.15mm}We \hspace*{-0.15mm}say \hspace*{-0.15mm}that \hspace*{-0.15mm}an \hspace*{-0.15mm}$N$-function~\hspace*{-0.15mm}${\psi\colon\hspace*{-0.1em}\mathbb{R}_{\ge 0}\hspace*{-0.15em}\to \hspace*{-0.15em}\mathbb{R}_{\ge 0}}$ satisfies the \textit{$\nabla_2$-condition} (in short, $\psi\in \nabla_2$), if its (Fenchel) conjugate $\psi^*\colon\mathbb{R}_{\ge 0}\to \mathbb{R}_{\ge 0}$ is an $N$-function satisfying the $\Delta_2$-condition. 
 If $\psi\colon\mathbb{R}_{\ge 0}\to \mathbb{R}_{\ge 0}$ satisfies~the~$\Delta_2$-~and the $\nabla_2$-condition (in~short, $\psi\hspace*{-0.1em}\in\hspace*{-0.1em} \Delta_2\cap \nabla_2$), then, it holds  
 the~\textit{$\varepsilon$-Young}~\textit{inequality}:~for~\mbox{every}~${\varepsilon\hspace*{-0.1em}>\hspace*{-0.1em} 0}$, there exists a constant $c_\varepsilon>0 $, depending only on
 $\Delta_2(\psi),\Delta_2( \psi ^*)<\infty$, such that~for~every~$ s,t\geq0 $, it holds that
 \begin{align}
  \label{ineq:young}
 s\,t\leq  c_\varepsilon \,\psi^*(s)+\varepsilon \, \psi(t)\,.
 \end{align}
 For a (Lebesgue) measurable set $M\subseteq \mathbb{R}^d$, $d\in \mathbb{N}$,   $\psi \colon M \times \mathbb{R}_{\ge 0} \to \mathbb{R}_{\ge 0}$ is called \textit{generalized $N$-function} if it is Carath\'eodory mapping and $\psi(x,\cdot)\colon  \mathbb{R}_{\ge 0}\to \mathbb{R}_{\ge 0}$ is for a.e.\ $x \in M$~an~\mbox{$N$-function}. For a generalized $N$-function~$\psi \colon M \times \mathbb{R}_{\ge 0}\hspace{-0.1em} \to \hspace{-0.1em}\mathbb{R}_{\ge 0}$ and  a (Lebesgue)~\mbox{measurable}~function~${f\colon \hspace{-0.1em}M\hspace{-0.1em}\to \hspace{-0.1em}\mathbb{R}}$, we write 
 $$\rho_{\psi,M}(f)\coloneqq \int_M \psi(\cdot,\vert f\vert )\,\textup{d}x \,,$$
 whenever~the~\mbox{right-hand}~side~is~\mbox{well-defined}.
 
 \subsection{Basic properties of the non-linear operators}\label{sec:basic} 
 
 \qquad Throughout the entire article, we always assume that  $\AAA\colon\Omega\times \mathbb{R}^d\to \mathbb{R}^d$ has \textit{$(p(\cdot),\delta)$-structure}, where $p\in \mathcal{P}^{\infty}(\Omega)$ with $p^->1$ and $\delta\ge 0$, \textit{i.e.}, for every $ a\in \mathbb{R}^d$ and a.e.\ $x\in \Omega$,~it~holds~that
 \begin{align}\label{def:A}
 \AAA(x,a)\coloneqq (\delta+\vert a\vert )^{p(x)-2}a\,.
 \end{align}
 For given $p\in \mathcal{P}^{\infty}(\Omega)$ with $p^->1$ and $\delta\ge 0$, we introduce the \textit{special generalized $N$-function}
$\varphi\coloneqq \varphi_{p,\delta}\colon\Omega\times\mathbb{R}_{\ge 0}\to \mathbb{R}_{\ge 0}$, for every $t\ge 0$ and a.e.\ $x\in \Omega$, defined by
\begin{align} 
 \label{eq:def_phi} 
 \varphi(x,t)\coloneqq \int _0^t \varphi'(x,s)\, \mathrm ds\,,\quad\text{where}\quad
 \varphi'(x,t) \coloneqq (\delta +t)^{p(x)-2} t\,.
\end{align}
For (Lebesgue) measurable functions $f,g\colon\Omega \to \mathbb{R}_{\ge 0}$, we write
$f\sim g$ (or $f\lesssim g$) if~there~exists~a constant \hspace{-0.1mm}$c\hspace{-0.1em}>\hspace{-0.1em}0$ \hspace{-0.1mm}such \hspace{-0.1mm}that \hspace{-0.1mm}$c^{-1}g\hspace{-0.1em}\leq\hspace{-0.1em} f\hspace{-0.1em}\leq\hspace{-0.1em} c\,g$ \hspace{-0.1mm}(or \hspace{-0.1mm}$ f\hspace{-0.1em}\leq\hspace{-0.1em} c\,g$) \hspace{-0.1mm}a.e.\ \hspace{-0.1mm}in \hspace{-0.1mm}$\Omega$. \hspace{-0.1mm}In \hspace{-0.1mm}particular,~i\hspace{-0.1mm}f~\hspace{-0.1mm}not~\hspace{-0.1mm}otherwise~\hspace{-0.1mm}specified, we always assume that the hidden constant in $\sim$ and $\lesssim$ depends only on $p^-,p^+>1$ and $\delta\ge 0$. 

Then, $\varphi\colon \Omega\times\mathbb{R}_{\ge 0}\to \mathbb{R}_{\ge 0}$ satisfies, uniformly in $\delta\ge  0$ and a.e.\ $x\in \Omega$, the
$\Delta_2$-condition~with $\textup{ess\,sup}_{x\in \Omega}{\Delta_2(\varphi(x,\cdot))}\lesssim 2^{\smash{\max \{2,p^+\}}}$. In addition, 
the (Fenchel) conjugate function (with respect to the second argument) $\varphi^*\colon\Omega\times\mathbb{R}_{\ge 0}\to \mathbb{R}_{\ge 0}$ satisfies, uniformly~in~both~${t \ge 0}$,~${\delta \ge 0}$,~and~a.e.~$x\in \Omega$, $$\varphi^*(x,t) \sim
(\delta^{p(x)-1} + t)^{p'(x)-2} t^2\,,$$ and the $\Delta_2$-condition with
$\textup{ess\,sup}_{x\in \Omega}{\Delta_2(\varphi^*(x,\cdot))} \lesssim 2^{\smash{\max \{2,(p^-)'\}}}$.

For a generalized $N$-function $\psi\colon\Omega\times \mathbb{R}_{\ge 0}\to \mathbb{R}_{\ge 0}$, we introduce \textit{shifted generalized $N$-functions} $\psi_a\colon\Omega\times \mathbb{R}_{\ge 0}\to \mathbb{R}_{\ge 0}$, ${a\ge 0}$, for every $a,t\ge 0$ and a.e.\ $x\in \Omega$, defined by
\begin{align}
 \label{eq:phi_shifted}
 \psi_a(x,t)\coloneqq \int _0^t \psi_a'(x,s)\, \mathrm ds\,,\quad\text{where}\quad
 \psi'_a(x,t)\coloneqq \psi'(x,a+t)\frac {t}{a+t}\,.
\end{align}

\begin{remark} \label{rem:phi_a}
 For the above defined $N$-function $\varphi\colon \Omega\times \mathbb{R}_{\ge 0}\to \mathbb{R}_{\ge 0}$ (\textit{cf.}\ \eqref{eq:def_phi}) uniformly~in~${a,t\ge 0}$ and a.e.\ $x\hspace*{-0.1em}\in\hspace*{-0.1em} \Omega$, it holds that
 \begin{align*}
     \varphi_a(x,t) &\sim (\delta+a+t)^{p(x)-2} t^2\,,\\
     (\varphi_a)^*(x,t)&\sim ((\delta+a)^{p(x)-1} +t)^{\smash{p'(x)-2}} t^2\,.
 \end{align*}
 The \hspace{-0.1mm}families
 \hspace{-0.1mm}$\{\varphi_a\}_{\smash{a \ge 0}},\{(\varphi_a)^*\}_{\smash{a \ge 0}}\colon\hspace{-0.15em}\Omega\hspace{-0.05em}\times\hspace{-0.05em} \mathbb{R}_{\ge 0}\hspace{-0.15em}\to \hspace{-0.15em}\mathbb{R}_{\ge 0}$ \hspace{-0.1mm}satisfy, \hspace{-0.1mm}uniformly \hspace{-0.1mm}in \hspace{-0.1mm}$a\hspace{-0.15em}\ge\hspace{-0.15em} 0$,~\hspace{-0.1mm}the~\hspace{-0.1mm}\mbox{$\Delta_2$-condition}~\hspace{-0.1mm}with ${\textup{ess\,sup}_{x\in \Omega}{\Delta_2(\varphi_a(x,\cdot))} \lesssim 2^{\smash{\max \{2,p^+\}}}}\!$ and
 ${\textup{ess\,sup}_{x\in \Omega}{\Delta_2((\varphi_a)^*(x,\cdot))} \lesssim 2^{\smash{\max \{2,(p^-)'\}}}}\!$,~respectively.
\end{remark}

Closely related to non-linear operator $\AAA\colon\Omega\times\mathbb{R}^d\to \mathbb{R}^d$, defined by \eqref{def:A},~are~the~non-linear operators $F,F^*\colon\Omega\times\mathbb{R}^d\to \mathbb{R}^d$, for every $a\in \mathbb{R}^d$ and a.e.\ $x\in \Omega$ defined by
\begin{align}
 F(x,a)&\coloneqq(\delta+\vert a\vert)^{\frac{p(x)-2}{2}}a\\\,\qquad F^*(x,a)&\coloneqq (\delta^{p(x)-1}+\vert a\vert)^{\frac{p'(x)-2}{2}}a
 \,.\label{eq:def_F}
\end{align}

The relations between
$\AAA,F,F^*\colon\Omega\times\mathbb{R}^d
\to \mathbb{R}^d$ and
$\varphi_a,(\varphi^*)_a,(\varphi_a)^*\colon\Omega\times\mathbb{R}_{\ge
 0}\to \mathbb{R}_{\ge
 0}$,~${a\ge 0}$, are presented in
 the following proposition.

\begin{proposition}
 \label{lem:hammer}
 Uniformly in $t\ge 0$, 
 $a, b \in \mathbb{R}^d$, and a.e.\ $ x,y\in \Omega$, we have that
 \begin{align}\label{eq:hammera}
 \begin{aligned}
  \hspace{-8mm}(\AAA(x,a) - \AAA(x,b))
  \cdot(a-b ) &\sim \smash{\vert F(x,a) - F(x,b)\vert^2}
  \\&
  \sim \varphi_{\vert a\vert }(x,\vert a - b\vert ) 
  \,,\\[-3mm]
  \end{aligned}
  \end{align}
  \begin{align}
  \smash{\vert F^*(x,a) - F^*(x,b)\vert^2}
  \label{eq:hammerf}
  &\sim \smash{\smash{(\varphi^*)}_{\smash{\vert a\vert }}(x,\vert a - b\vert )}\,,\\
   \label{eq:hammerg}
 \smash{\smash{(\varphi^*)}_{\smash{\vert \AAA(x,a)\vert }}(x,t)}
   &\sim \smash{\smash{(\varphi}_{\smash{\vert a\vert }})^*(x,t)}\,,\\
     \label{eq:hammerh}
\smash{\vert F^*(x,\AAA(x,a)) - F^*(x,\AAA(y,b))\vert^2}
   &\sim \smash{\smash{(\varphi}_{\smash{\vert a\vert }})^*(x,\vert \AAA(x,a)-\AAA(y,b)\vert)}\,.
 \end{align}
\end{proposition}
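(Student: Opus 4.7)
The plan is to reduce all four equivalences to the classical constant-exponent ``hammer'' identities by freezing the spatial variable $x$ (and, for \eqref{eq:hammerh}, treating $\AAA(y,b)$ simply as a fixed vector in $\mathbb{R}^d$). For each $x\in\Omega$, $\varphi(x,\cdot)$ is a regular $N$-function of $(p(x),\delta)$-structure whose $\Delta_2$- and $\nabla_2$-constants are uniformly controlled by $p^-,p^+$ (cf.\ Remark~\ref{rem:phi_a}), so the hidden constants in the classical estimates transfer to hidden constants depending only on $p^-,p^+,\delta$, yielding all four pointwise equivalences a.e.\ in $\Omega$.

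First, \eqref{eq:hammera} is the well-known three-way equivalence between the monotonicity defect $(\AAA(x,a)-\AAA(x,b))\cdot(a-b)$, the natural distance $\vert F(x,a)-F(x,b)\vert^2$, and the shifted $N$-function $\varphi_{\vert a\vert}(x,\vert a-b\vert)$. The proof proceeds by integrating the second derivative of $\varphi(x,\cdot)$ along the segment joining $a$ and $b$ and exploiting elementary monotonicity bounds on $(\delta+\vert a\vert+\vert b\vert)^{p(x)-2}$ in terms of $(\delta+\vert a\vert+\vert a-b\vert)^{p(x)-2}$. Identity \eqref{eq:hammerf} is the same three-way equivalence, but applied to the conjugate $N$-function $\varphi^*(x,\cdot)\sim(\delta^{p(x)-1}+\cdot)^{p'(x)-2}(\cdot)^2$; by construction the associated square-root operator is $F^*(x,\cdot)$, and Remark~\ref{rem:phi_a} again ensures that the $\Delta_2$/$\nabla_2$ constants are uniform in $x$.

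For \eqref{eq:hammerg} I would invoke the shift-duality rule $(\psi_a)^*(t)\sim(\psi^*)_{\psi'(a)}(t)$, valid for every regular $N$-function $\psi$, applied pointwise to $\psi=\varphi(x,\cdot)$ with shift $\vert a\vert$; the identity $\varphi'(x,\vert a\vert)=(\delta+\vert a\vert)^{p(x)-2}\vert a\vert=\vert\AAA(x,a)\vert$ then finishes the step. Finally, \eqref{eq:hammerh} follows by setting $c\coloneqq\AAA(x,a)$ and $d\coloneqq\AAA(y,b)$: applying \eqref{eq:hammerf} gives $\vert F^*(x,c)-F^*(x,d)\vert^2\sim(\varphi^*)_{\vert c\vert}(x,\vert c-d\vert)$, and then \eqref{eq:hammerg} with shift $\vert c\vert=\vert\AAA(x,a)\vert$ rewrites the right-hand side as $(\varphi_{\vert a\vert})^*(x,\vert c-d\vert)$.

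The main obstacle is bookkeeping rather than analysis: one must ensure that the uniformity in both $a\ge 0$ and $x\in\Omega$ of the $\Delta_2$-constants asserted in Remark~\ref{rem:phi_a} genuinely permits running the classical constant-exponent proofs with hidden constants depending only on $p^-,p^+,\delta$. Once this is verified, all four identities reduce to standard statements from the constant-exponent Orlicz theory of the $p$-Laplacian.
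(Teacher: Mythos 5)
Your proposal is correct and follows essentially the same route as the paper: the paper cites \cite[Rem.\ A.9]{BDS15} for \eqref{eq:hammera}--\eqref{eq:hammerg}, which are exactly the pointwise, frozen-$x$ versions of the classical constant-exponent estimates with constants uniform in $p(x)\in[p^-,p^+]$ and $\delta\ge 0$ that you sketch, and it obtains \eqref{eq:hammerh} precisely as you do, by applying \eqref{eq:hammerf} to the fixed vectors $\AAA(x,a)$, $\AAA(y,b)$ and then the shift-duality \eqref{eq:hammerg}. Your additional bookkeeping on the uniformity of the $\Delta_2$/$\nabla_2$ constants is exactly the point covered by Remark \ref{rem:phi_a}, so no gap remains.
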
 

\begin{proof}
 For the equivalences \eqref{eq:hammera}--\eqref{eq:hammerg}, we refer to \cite[Rem.\  A.9]{BDS15}. The equivalence 
 \eqref{eq:hammerh} follows from the equivalences \eqref{eq:hammerf} and \eqref{eq:hammerg}.
 \enlargethispage{4mm}
\end{proof}

In addition, we need the following shift change result.

\begin{lemma}\label{lem:shift-change}
 For every $\varepsilon>0$, there exists $c_\varepsilon \geq 1$ (depending only
 on~$\varepsilon>0$, $p^-,p^+>1$, and $\delta\ge 0$) such that for every $t\geq 0$, $a,b\in\mathbb{R}^d$, and a.e.\ $x\in \Omega$, it holds that
 \begin{align}
 \varphi_{\vert a\vert}(x,t)&\leq c_\varepsilon\, \varphi_{\vert b\vert }(x,t)
 +\varepsilon\, \vert F(x,a) - F(x,b)\vert^2\,,\label{lem:shift-change.1}
 \\
 (\varphi_{\vert a\vert})^*(x,t)&\leq c_\varepsilon\, (\varphi_{\vert b\vert })^*(x,t)
 +\varepsilon\, \vert F(x,a) - F(x,b)\vert^2\,.\label{lem:shift-change.3}
 \end{align}
\end{lemma}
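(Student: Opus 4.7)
The plan is to reduce both inequalities to a.e.-pointwise statements in $x\in\Omega$ and then invoke the classical shift-change machinery for generalized $N$-functions in $\Delta_2 \cap \nabla_2$. Fix such $x$ and suppress it in the notation; all implicit constants below depend only on $p^-,p^+>1$ and $\delta\geq 0$. Thanks to the explicit equivalences from Remark~\ref{rem:phi_a}, namely $\varphi_{|a|}(t) \sim (\delta+|a|+t)^{p(x)-2}\,t^2$ and $(\varphi_{|a|})^*(t) \sim ((\delta+|a|)^{p(x)-1}+t)^{p'(x)-2}\,t^2$ together with the uniform $\Delta_2$- and $\nabla_2$-constants, both inequalities reduce to algebraic estimates that are uniform in $p(x)\in[p^-,p^+]$.

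The central step is the following pair of pointwise shift-change inequalities (of Diening--Ettwein/Diening--Kreuzer type):
\begin{align*}
\varphi_{|a|}(t) &\;\leq\; c_\varepsilon\, \varphi_{|b|}(t) \;+\; \varepsilon\, \varphi_{|a|}(|a-b|),\\
(\varphi_{|a|})^*(t) &\;\leq\; c_\varepsilon\, (\varphi_{|b|})^*(t) \;+\; \varepsilon\, (\varphi_{|a|})^*(|\AAA(a)-\AAA(b)|),
\end{align*}
valid for all $t\geq 0$ and $a,b\in\mathbb{R}^d$. I would prove these via a dichotomy on the size of $|a-b|$ relative to the natural scale $\delta+|b|+t$ (respectively $(\delta+|b|)^{p(x)-1}+t$ in the dual case). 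If $|a-b|$ is smaller than a small multiple of this scale, then $||a|-|b|| \leq |a-b|$ implies that the two shifts are comparable, hence $\varphi_{|a|}(t)\lesssim\varphi_{|b|}(t)$, which is absorbed into $c_\varepsilon\varphi_{|b|}(t)$; otherwise, both $t$ and $\delta+|a|+t$ are controlled by a constant (depending on the threshold) times $|a-b|$, and a short case split on the sign of $p(x)-2$ yields $\varphi_{|a|}(t)\lesssim \varphi_{|a|}(|a-b|)$. The parameter $\varepsilon$ is then extracted from the universal constant of the second case via the $\varepsilon$-Young inequality~\eqref{ineq:young}, which is available since $\varphi_a,(\varphi_a)^*\in\Delta_2\cap\nabla_2$ uniformly in $a\ge 0$. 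The dual inequality is obtained by running the same argument with $(\varphi_a)^*$ in place of $\varphi_a$ and $\AAA(a),\AAA(b)$ in place of $a,b$.

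Given these two pointwise estimates, \eqref{lem:shift-change.1} follows at once from \eqref{eq:hammera}, which identifies $\varphi_{|a|}(|a-b|) \sim |F(a)-F(b)|^2$. For \eqref{lem:shift-change.3}, I combine \eqref{eq:hammerh} (with $y=x$) and \eqref{eq:hammerg} to obtain $(\varphi_{|a|})^*(|\AAA(a)-\AAA(b)|) \sim |F^*(\AAA(a))-F^*(\AAA(b))|^2$, and then use the standard convex-duality identity $|F^*(\AAA(a))-F^*(\AAA(b))|^2 \sim |F(a)-F(b)|^2$ (both quantities being comparable to $(\AAA(a)-\AAA(b))\cdot(a-b)$ via \eqref{eq:hammera} and its dual counterpart). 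This upgrades the error term in the dual shift-change to $\varepsilon |F(a)-F(b)|^2$, giving \eqref{lem:shift-change.3}.

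The main obstacle is keeping every constant in the chain uniform in $x$ and in the exponent $p(x)\in[p^-,p^+]$; this is precisely what is ensured by the uniform $\Delta_2$- and $\nabla_2$-bounds of $\{\varphi_a\}_{a\geq 0}$ and $\{(\varphi_a)^*\}_{a\geq 0}$ from Remark~\ref{rem:phi_a}, and it is the reason the argument goes through in the variable-exponent setting $p\in\mathcal{P}^{\infty}(\Omega)$ with $p^->1$ exactly as in the constant-exponent case.
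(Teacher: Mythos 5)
Your overall strategy (reduce to a pointwise estimate uniform in $p(x)\in[p^-,p^+]$, prove a shift-change with error term $\varphi_{\vert a\vert}(\vert a-b\vert)$ resp.\ $(\varphi_{\vert a\vert})^*(\vert \AAA(x,a)-\AAA(x,b)\vert)$, and then convert the error terms via \eqref{eq:hammera}, \eqref{eq:hammerf}--\eqref{eq:hammerh}) is the right one, and the paper itself only cites \cite[Rem.\ A.9]{BDS15} for exactly this argument. However, your central step has a genuine gap. With your dichotomy --- $\vert a-b\vert$ small or large compared to a \emph{fixed} small multiple $\theta$ of $\delta+\vert b\vert+t$ --- the second case only yields $\varphi_{\vert a\vert}(x,t)\leq C(\theta)\,\varphi_{\vert a\vert}(x,\vert a-b\vert)\sim C(\theta)\,\vert F(x,a)-F(x,b)\vert^2$ with a constant depending on $\theta$ (and $p^\pm$), not with the factor $\varepsilon$. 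The claim that ``the parameter $\varepsilon$ is then extracted from the universal constant of the second case via the $\varepsilon$-Young inequality \eqref{ineq:young}'' is not a valid step: \eqref{ineq:young} estimates a \emph{product} $s\,t$ by $c_\varepsilon\psi^*(s)+\varepsilon\psi(t)$; there is no product here, and no application of it can downgrade a fixed constant in front of $\vert F(x,a)-F(x,b)\vert^2$ to an arbitrarily small $\varepsilon$ at the price of enlarging only the $\varphi_{\vert b\vert}(x,t)$-term. Note also that you cannot repair this by taking $\theta$ large (so as to make the second case small): for $p(x)<2$ the first case then fails, e.g.\ $\delta=0$, $a=0$, $\vert b\vert=M$ fixed, $t\to 0$ satisfies $\vert a-b\vert\leq\theta(\vert b\vert+t)$ for $\theta\geq 1$, while $\varphi_{0}(x,t)/\varphi_{M}(x,t)\sim M^{2-p(x)}t^{p(x)-2}\to\infty$.

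The standard (and correct) mechanism makes the threshold depend on $\varepsilon$ and compares $t$ with $\vert a-b\vert$ rather than $\vert a-b\vert$ with $\delta+\vert b\vert+t$: choose $\sigma=\sigma(\varepsilon,p^\pm)\in(0,1]$ and split into $t\leq\sigma\vert a-b\vert$ and $t>\sigma\vert a-b\vert$. In the first case, monotonicity and the scaling property of shifted $N$-functions, $\varphi_{\vert a\vert}(x,\lambda s)\lesssim\lambda^{\min\{2,p^-\}}\varphi_{\vert a\vert}(x,s)$ for $\lambda\in[0,1]$ (the primal analogue of \eqref{estimate}), give $\varphi_{\vert a\vert}(x,t)\leq\varphi_{\vert a\vert}(x,\sigma\vert a-b\vert)\lesssim\sigma^{\min\{2,p^-\}}\varphi_{\vert a\vert}(x,\vert a-b\vert)\leq\varepsilon\,\vert F(x,a)-F(x,b)\vert^2$ after choosing $\sigma$ small; in the second case, $\vert a-b\vert<\sigma^{-1}t$ forces $\delta+\vert a\vert+t\sim_\sigma\delta+\vert b\vert+t$, hence $\varphi_{\vert a\vert}(x,t)\leq c_\varepsilon\,\varphi_{\vert b\vert}(x,t)$ by Remark \ref{rem:phi_a}, uniformly in $p(x)\in[p^-,p^+]$. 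The dual inequality \eqref{lem:shift-change.3} needs the same $\varepsilon$-dependent threshold (with $(\varphi_{\vert a\vert})^*$, the dual scaling exponent $\min\{2,(p^+)'\}$, and $\vert\AAA(x,a)-\AAA(x,b)\vert$ in place of $\vert a-b\vert$); your final conversion of $(\varphi_{\vert a\vert})^*(x,\vert\AAA(x,a)-\AAA(x,b)\vert)$ into $\vert F(x,a)-F(x,b)\vert^2$ via \eqref{eq:hammerg}, \eqref{eq:hammerh}, and the natural/conjugate natural distance is fine. So the architecture of your proof is salvageable, but as written the key $\varepsilon$-extraction step would fail.
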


\begin{proof}
 See \cite[Rem.\  A.9]{BDS15}.
\end{proof}

\begin{remark}[Natural distance]
 \label{rem:natural_dist}
 Due to \eqref{eq:hammera}, uniformly in $u, v \in W^{1,p(\cdot)}(\Omega)$, it holds that
 \begin{align*}
  (\AAA(\cdot,\nabla u) -
  \AAA(\cdot,\nabla v),\nabla u - \nabla v)_\Omega
  &\sim
 \|F(\cdot,\nabla u)-F(\cdot,\nabla v)\|_{2,\Omega}^2 \\&\sim \rho_{\varphi_{\vert \nabla u\vert},\Omega}(\nabla u -
 \nabla v)\,.
 \end{align*}
 We refer to all three equivalent quantities as the \textup{natural distance}.  In particular, note that $\varphi_{\vert \nabla v\vert}\colon \Omega\times\mathbb{R}_{\ge 0}\to \mathbb{R}_{\ge 0}$ for every $v\in  W^{1,p(\cdot)}(\Omega)$ is a generalized $N$-function. 
\end{remark}

\begin{remark}[Conjugate natural distance]
 \label{rem:conjugate_natural_dist}
 For a.e.\ $x\in \Omega$, $\AAA(x,\cdot)\colon\mathbb{R}^d\to \mathbb{R}^d$ 
 is a continuous, strictly monotone and coercive operator, so that from the theory of monotone~operators~(\textit{cf.}~\cite{Zei90B}), it follows that $\AAA(x,\cdot)\colon\mathbb{R}^d\to \mathbb{R}^d$ for a.e.\ $x\in \Omega$ is bijective and its inverse $\AAA^{-1}(x,\cdot):\mathbb{R}^d\to \mathbb{R}^d$ continuous. Due to \eqref{eq:hammera}, uniformly in ${z, y \in L^{p'(\cdot)}(\Omega;\mathbb{R}^d)}$, it holds that
 \begin{align*}
  (\AAA^{-1}(\cdot,z)-\AAA^{-1}(\cdot,y),z-y)_\Omega
  &\sim
 \|F^*(\cdot,z)-F^*(\cdot,y)\|_{2,\Omega}^2 \\&\sim \rho_{(\varphi^*)_{\vert z\vert},\Omega}( z -
 y)\,.
 \end{align*}
 We refer to all
 three equivalent quantities as the \textup{conjugate natural distance}. In particular, note that $(\varphi^*)_{\vert y\vert}\colon \Omega\times\mathbb{R}_{\ge 0}\to \mathbb{R}_{\ge 0}$ for every $y\in L^{p'(\cdot)}(\Omega;\mathbb{R}^d)$ is a generalized $N$-function. 
\end{remark}\newpage
 	
 \subsection{Triangulations and standard finite element spaces}\label{subsec:finite_elements}

    \hspace{5mm}In what follows, let $\{\mathcal{T}_h\}_{h>0}$ be  family of  triangulations of $\Omega\subseteq \mathbb{R}^d$, $d\in\mathbb{N}$, (\textit{cf.}\ \cite{EG21}). Here,
	the parameter $h>0$ denotes the \textit{averaged mesh-size}, \textit{i.e.}, we have that
 \begin{align*}
     h\coloneqq \left(\frac{\vert \Omega\vert}{\textup{card}(\mathcal{N}_h)}\right)^{\frac{1}{d}}\,,
 \end{align*} 
 where $\mathcal{N}_h$ is the \textit{set of vertices}.
  We assume~there~exists a constant  $\omega_0\hspace{-0.1em}>\hspace{-0.1em}0$,  independent of $h>0$, such that $\max_{T\in \mathcal{T}_h}{h_T}{\smash{\rho_T^{-1}}}\hspace{-0.1em}\le\hspace{-0.1em}
    \omega_0$, where $h_T\hspace{-0.1em}\coloneqq \hspace{-0.1em}\textup{diam}(T)$ and ${\rho_T\hspace{-0.1em}\coloneqq \hspace{-0.1em}
    \sup\{r\hspace{-0.1em}>\hspace{-0.1em}0\mid \exists x\hspace{-0.1em}\in\hspace{-0.1em} T: B_r^d(x)\hspace{-0.1em}\subseteq \hspace{-0.1em}T\}}$ for all $T\hspace{-0.1em}\in\hspace{-0.1em} \mathcal{T}_h$. The smallest such constant is called the \textit{chunkiness} of  $\{\mathcal{T}_h\}_{h>0}$.   For every ${T \in \mathcal{T}_h}$, 
    let $\omega_T\coloneqq \bigcup\{T'\in \mathcal{T}_h\mid T'\cap T\neq \emptyset\} $ denote the \textit{element patch} of $T$. Then, 
    we  assume that $\textup{int}(\omega_T)$ is connected for all $T\in \mathcal{T}_h$, so that $\textup{card}(\bigcup\{T'\in\mathcal{T}_h\mid T'\subseteq \omega_T\})+\textup{card}(\bigcup\{T'\in\mathcal{T}_h\mid T\subseteq \omega_{T'}\})\leq c $, where $c>0$ depends only on the chunkiness $\omega_0>0$, and $\vert T\vert \sim
    \vert \omega_T\vert$ for all  $T\in \mathcal{T}_h$. Eventually, we define the \textit{maximum mesh-size} by $h_{\max}\coloneqq \max_{T\in \mathcal{T}_h}{h_T}>0$.

 We define interior and boundary sides of $\mathcal{T}_h$ in the following way: an \textit{interior side} is the closure of the 
    non-empty relative interior of $\partial T \cap \partial T'$,~where~${T, T'\in \mathcal{T}_h}$~are~two~adjacent~elements.
    For an interior side $S\coloneqq 
    \partial T \cap \partial T'\in \mathcal{S}_h$, where $T,T'\in \mathcal{T}_h$, the \textit{side patch} is~defined~by~$\omega_S\coloneqq  T \cup
    T'$. A \textit{boundary side} is the closure of the non-empty relative interior of
    $\partial T \cap \partial \Omega$, where $T\in \mathcal{T}_h$ denotes a boundary~element~of~$\mathcal{T}_h$.  For a boundary side $S\coloneqq  \partial T \cap \partial
    \Omega$, the side patch~is~defined~by~$\omega_S\coloneqq  T $. 
    Eventually, 
    by $\mathcal{S}_h^{i}$, we denote the \textit{set of 
 interior sides}, by $\mathcal{S}_h^{\partial}$, we denote the \textit{set of 
 boundary~sides},
    and by $\mathcal{S}_h$, we denote~the~\textit{set~of~all~sides}.

    For (Lebesgue) measurable~functions~${u,v\colon\mathcal{S}_h\to \mathbb{R}}$ and $\mathcal{M}_h\subseteq \mathcal{S}_h$, we write
    \begin{align*}
        (u,v)_{\mathcal{M}_h}\coloneqq \sum_{S\in \mathcal{M}_h}{(u,v)_S}\,,\quad\text{ where }(u,v)_S\coloneqq\int_S{uv\,\mathrm{d}s}\,,
    \end{align*}
    whenever all integrals are well-defined. Analogously, for  (Lebesgue) measurable vector fields $z,y\colon \mathcal{S}_h\hspace{-0.1em}\to\hspace{-0.1em} \mathbb{R}^d$ \hspace{-0.1mm}and \hspace{-0.1mm}$\mathcal{M}_h\subseteq \mathcal{S}_h$, we write ${(z,y)_{\mathcal{M}_h}\hspace{-0.15em}\coloneqq\hspace{-0.15em} \sum_{S\in \mathcal{M}_h}{(z,y)_S}}$,~where~${(z,y)_S\coloneqq\int_S{z\cdot y\,\mathrm{d}s}}$.\enlargethispage{3mm}
	
	For $k\in \mathbb{N}\cup\{0\}$ and $T\in \mathcal{T}_h$, let $\mathcal{P}_k(T)$ denote the set of polynomials of maximal~degree~$k$~on~$T$. Then, for $k\in \mathbb{N}\cup\{0\}$~and $l\in \mathbb{N}$, the sets of  
 continuous and~\mbox{element-wise}~polynomial functions or vector~fields,~respectively, are defined by
	\begin{align*}
	\begin{aligned}
	\mathcal{S}^k(\mathcal{T}_h)^l&\coloneqq 	\big\{v_h\in C^0(\overline{\Omega};\mathbb{R}^l)\hspace*{-3mm}&&\mid v_h|_T\in\mathcal{P}_k(T)^l\text{ for all }T\in \mathcal{T}_h\big\}\,,\\
	\mathcal{L}^k(\mathcal{T}_h)^l&\coloneqq \big\{v_h\in L^\infty(\Omega;\mathbb{R}^l)\hspace*{-3mm}&&\mid v_h|_T\in\mathcal{P}_k(T)^l\text{ for all }T\in \mathcal{T}_h\big\}\,.
	\end{aligned}
	\end{align*}
	The \textit{element-wise constant mesh-size function} $h_\mathcal{T}\in \mathcal{L}^0(\mathcal{T}_h)$ is defined~by~${h_\mathcal{T}|_T\coloneqq h_T}$~for~all~${T\in \mathcal{T}_h}$.
	The \textit{side-wise constant mesh-size function} $h_\mathcal{S}\in \mathcal{L}^0(\mathcal{S}_h)$ is defined~by~${h_\mathcal{S}|_S\coloneqq h_S}$~for~all~${S\in \mathcal{S}_h}$,  where $h_S\coloneqq \textup{diam}(S)$ for all $S\in \mathcal{S}_h$.
	For every $T\in \mathcal{T}_h$ and $S\in \mathcal{S}_h$, we denote by 
 \begin{align*}
     x_T\coloneqq \frac{1}{d+1}\sum_{\nu\in \mathcal{N}_h\cap T}{\nu }\quad\text{ and }\quad x_S\coloneqq \frac{1}{d}\sum_{\nu \in \mathcal{N}_h\cap S}{\nu}\,,
 \end{align*}
 the  barycenters~of~$T$ and $S$, respectively. The \hspace{-0.2mm}\textit{(local) \hspace{-0.4mm}$L^2$\hspace{-0.2mm}-projection \hspace{-0.2mm}operator}  \hspace{-0.2mm}onto \hspace{-0.2mm}\mbox{element-wise} \hspace{-0.2mm}constant \hspace{-0.2mm}functions \hspace{-0.2mm}or~\hspace{-0.2mm}vector \hspace{-0.2mm}fields, \mbox{respectively}, $\Pi_h\colon L^1(\Omega;\mathbb{R}^l)\to \mathcal{L}^0(\mathcal{T}_h)^l$,
 for every $v\in L^1(\Omega;\mathbb{R}^l)$ is defined by 
 \begin{align*}
     \Pi_hv\coloneqq \sum_{T\in \mathcal{T}_h}{ \langle v\rangle_T \chi_T}\,.
 \end{align*}
 The \textit{element-wise~gradient}  
 $\nabla_{\!h}\colon \mathcal{L}^1(\mathcal{T}_h)^l\to \mathcal{L}^0(\mathcal{T}_h)^{l\times d}$, for every $v_h\in \mathcal{L}^1(\mathcal{T}_h)^l$ is defined by 
 \begin{align*}
    \nabla_{\!h}v_h\coloneqq \sum_{T\in \mathcal{T}_h}{ \nabla(v_h|_T) \chi_T}\,.
 \end{align*} 
 	
  For  every $v_h\in \mathcal{L}^k(\mathcal{T}_h)$ and $S\in\mathcal{S}_h$, the \textit{jump across} $S$ is defined by 
 	\begin{align*}
 		\jump{v_h}_S\coloneqq\begin{cases}
 			v_h|_{T_+}-v_h|_{T_-}&\text{ if }S\in \mathcal{S}_h^{i}\,,\text{ where }T_+, T_-\in \mathcal{T}_h\text{ satisfy }\partial T_+\cap\partial T_-=S\,,\\
 			 v_h|_T&\text{ if }S\in\mathcal{S}_h^{\partial}\,,\text{ where }T\in \mathcal{T}_h\text{ satisfies }S\subseteq \partial T\,.
 		\end{cases}
 	\end{align*}
 
 	For  every $y_h\in (\mathcal{L}^k(\mathcal{T}_h))^d$ and $S\in\mathcal{S}_h$, the \textit{normal jump across} $S$ is defined by\vspace*{-0.5mm}
 			\begin{align*}
 			\jump{y_h\cdot n}_S\coloneqq\begin{cases}
 			y_h|_{T_+}\!\cdot n_{T_+}+y_h|_{T_-}\!\cdot n_{T_-}&\text{ if }S\in \mathcal{S}_h^{i}\,,\text{ where }T_+, T_-\in \mathcal{T}_h\text{ satisfy }\partial T_+\cap\partial T_-=S\,,\\
 			y_h|_T\cdot n&\text{ if }S\in\mathcal{S}_h^{\partial}\,,\text{ where }T\in \mathcal{T}_h\text{ satisfies }S\subseteq \partial T\,,
 			\end{cases}
 	\end{align*}
 where, for every $T\in \mathcal{T}_h$, $\smash{n_T\colon\partial T\to \mathbb{S}^{d-1}}$ denotes the outward unit normal vector field to $ T$.\enlargethispage{2mm}
	
	\subsubsection{Crouzeix--Raviart element} 
	
	\hspace{5mm}The \textit{Crouzeix--Raviart finite element space} (\textit{cf.}\ \cite{CR73}) is defined as the space of element-wise affine functions that are continuous in the barycenters of interior element sides, \textit{i.e.},
	\begin{align*}\mathcal{S}^{1,\textit{cr}}(\mathcal{T}_h)\coloneqq \big\{v_h\in \mathcal{L}^1(\mathcal{T}_h)\mid \jump{v_h}_S(x_S)=0\text{ for all }S\in \mathcal{S}_h^{i}\big\}\,.
	\end{align*}
	The Crouzeix--Raviart finite element space with homogeneous Dirichlet boundary condition is defined as the space of Crouzeix--Raviart functions that vanish in the barycenters of boundary sides that belong to  $\Gamma_D$, \textit{i.e.},
	\begin{align*}
			\mathcal{S}^{1,\textit{cr}}_D(\mathcal{T}_h)\coloneqq \big\{v_h\in\smash{\mathcal{S}^{1,\textit{cr}}(\mathcal{T}_h)}\mid v_h(x_S)=0\text{ for all }S\in \mathcal{S}_h\cap \Gamma_D\big\}\,.
	\end{align*}
	
	\subsubsection{Raviart--Thomas element}
 
	\hspace{5mm}The \textit{lowest order Raviart--Thomas finite element space} (\textit{cf.}\ \cite{RT75}) is defined as the space of element-wise  affine vector fields that have continuous constant normal components on  interior elements sides, \textit{i.e.},
	\begin{align*}
 \mathcal{R}T^0(\mathcal{T}_h)\coloneqq \big\{y_h\in \mathcal{L}^1(\mathcal{T}_h)^d\mid &\,\smash{y_h|_T\cdot  n_T=\textup{const}\text{ on }\partial T\text{ for all }T\in \mathcal{T}_h\,,}\\ 
 &\smash{	\jump{y_h\cdot n}_S=0\text{ on }S\text{ for all }S\in \mathcal{S}_h^{i}\big\}\,.}
	\end{align*}
    The	Raviart--Thomas finite element space with homogeneous slip boundary condition is defined as the space of Raviart--Thomas functions that have vanishing normal components~on~$\Gamma_N$, \textit{i.e.},
	\begin{align*}
		\mathcal{R}T^{0}_N(\mathcal{T}_h)\coloneqq \big\{y_h\in	\mathcal{R}T^0(\mathcal{T}_h)\mid y_h\cdot n=0\text{ on }\Gamma_N\big\}\,.
	\end{align*} 
	
	\subsubsection{Discrete integration-by-parts formula} 
 
 	\hspace{5mm}For every $v_h\in \mathcal{S}^{1,\textit{cr}}(\mathcal{T}_h)$ and ${y_h\in \mathcal{R}T^0(\mathcal{T}_h)}$, we have the \textit{discrete integration-by-parts
	formula}
	\begin{align}
	(\nabla_{\!h}v_h,\Pi_h y_h)_\Omega+(\Pi_h v_h,\,\textup{div}\,y_h)_\Omega=(v_h,y_h\cdot n)_{\partial\Omega}\,,\label{eq:pi}
	\end{align}
	which follows from the fact that  for every $y_h\in \mathcal{R}T^0(\mathcal{T}_h)$, it holds that $y_h|_T\cdot n_T=\textrm{const}$~on~$\partial T$ for all $T\in \mathcal{T}_h$ and	$\jump{y_h\cdot n}_S=0$ on $S$ for all $S\in \mathcal{S}_h^{i}$, and for every~${v_h\in \mathcal{S}^{1,\textit{\textrm{cr}}}(\mathcal{T}_h)}$,~it~holds~that $\int_{S}{\jump{v_h}_S\,\textup{d}s}\hspace{-0.15em}=\hspace{-0.15em}\jump{v_h}_S(x_S)\hspace{-0.15em}=\hspace{-0.15em}0$ for all $S\hspace{-0.15em}\in\hspace{-0.15em} \mathcal{S}_h^{i}$.
 For~every~$v_h\hspace{-0.15em}\in\hspace{-0.15em} \smash{\mathcal{S}^{1,\textit{\textrm{cr}}}_D(\mathcal{T}_h)}$~and~${y_h\hspace{-0.15em}\in \hspace{-0.15em}\smash{\mathcal{R}T^0_N(\mathcal{T}_h)}}$,~\eqref{eq:pi}~reads
	\begin{align}
		(\nabla_{\!h}v_h,\Pi_h y_h)_\Omega=-(\Pi_h v_h,\,\textup{div}\,y_h)_\Omega\,.\label{eq:pi0}
	\end{align} 
    In~addition, appealing to \cite[Sec.\ 2.4]{BW21}, there holds the \textit{discrete Helmholtz decomposition}
    \begin{align}
        \mathcal{L}^0(\mathcal{T}_h)^d=\textup{ker}(\textup{div}|_{\smash{\mathcal{R}T^0_N(\mathcal{T}_h)}})\oplus \nabla_{\!h}(\smash{\mathcal{S}^{1,\textit{\textrm{cr}}}_D(\mathcal{T}_h)})\,,\label{eq:decomposition}
    \end{align}
    which shows that for every  $y_h\in \mathcal{L}^0(\mathcal{T}_h)^d$, the following implication applies:
    \begin{align}
        (y_h,\nabla_{\!h} v_h)_{\Omega}=0\quad\text{ for all }v_h\in \mathcal{S}^{1,\textit{\textrm{cr}}}_D(\mathcal{T}_h)\qquad\Rightarrow \qquad y_h\in \mathcal{R}T^0_N(\mathcal{T}_h)\cap \mathcal{L}^0(\mathcal{T}_h)^d\,.\label{eq:decomposition.1}
    \end{align}
    The implication \eqref{eq:decomposition.1} is of crucial importance in the derivation of discrete strong duality relations and a discrete recontruction formula in Proposition \ref{prop:discrete_convex_duality}.\newpage
	 
 \subsection{$p(\cdot)$-Dirichlet problem}
	
	\qquad In this section, we briefly review  the variational formulation, the primal formulation, and the dual formulation of the $p(\cdot)$-Dirichlet problem \eqref{eq:pDirichlet}.\vspace{-2mm}
	
	\subsubsection{Variational problem}\vspace{-0.5mm}
		
		\qquad Given 
		\hspace{-0.1mm}a \hspace{-0.1mm}right-hand \hspace{-0.1mm}side \hspace{-0.1mm}$f\hspace{-0.1em}\in \hspace{-0.1em} L^{p'(\cdot)}(\Omega)$, \hspace{-0.1mm}where \hspace{-0.1mm}$p\hspace{-0.1em}\in\hspace{-0.1em} \mathcal{P}^{\infty}(\Omega)$ \hspace{-0.1mm}with \hspace{-0.1mm}$p^-\hspace{-0.1em}>\hspace{-0.1em}1$, \hspace{-0.1mm}the \hspace{-0.1mm}\textit{$p(\cdot)$-Dirichlet~\hspace{-0.1mm}\mbox{problem}} seeks for a function ${u\in {W^{1,p(\cdot)}_D(\Omega)}}$ such that for every $v\in {W^{1,p(\cdot)}_D(\Omega)}$, it holds that
		\begin{align}
		 (\AAA(\cdot,\nabla u),\nabla v)_\Omega=(f,v)_\Omega\,.\label{eq:pDirichletW1p}
		\end{align}
		The theory of monotone operators (\textit{cf.}\ \cite{Zei90B}) proves the existence of a  unique solution to \eqref{eq:pDirichletW1p}. In what follows, we~reserve~the~notation~$u\in {W^{1,p(\cdot)}_D(\Omega)}$~for~this~solution.\vspace{-1mm}
		
	\subsubsection{Minimization problem and convex duality relations}
	
	\hspace{5mm}The variational problem \eqref{eq:pDirichletW1p} emerges as an optimality condition of an equivalent convex minimization problem, leading to a primal and a dual formulation, and convex duality relations.\smallskip
	
	\hspace{5mm}\textit{Primal problem.} The problem \eqref{eq:pDirichletW1p} is equivalent to the minimization of the \textit{$p(\cdot)$-Dirichlet energy}    $I\colon {W^{1,p(\cdot)}_D(\Omega)}\to \mathbb{R}$, for every $v\in {W^{1,p(\cdot)}_D(\Omega)}$ defined by
	\begin{align}\label{eq:pDirichletPrimal}
	 I(v)\coloneqq \rho_{\varphi,\Omega}( \nabla v)-(f,v)_\Omega\,.
	\end{align}
	We will always refer to the minimization of the $p(\cdot)$-Dirichlet energy \eqref{eq:pDirichletPrimal}~as~the~\textit{primal~\mbox{problem}}.
	Since the $p(\cdot)$-Dirichlet energy \eqref{eq:pDirichletPrimal} is proper, 
	strictly convex, weakly coercive, 
	and~lower~semi-continuous, the existence of a unique minimizer of \eqref{eq:pDirichletPrimal}, called  \textit{primal solution}, follows using the \hspace*{-0.1mm}direct \hspace*{-0.1mm}method \hspace*{-0.1mm}in \hspace*{-0.1mm}the \hspace*{-0.1mm}calculus \hspace*{-0.1mm}of \hspace*{-0.1mm}variations \hspace*{-0.1mm}(\textit{cf.}\ \hspace*{-0.1mm}\cite{Dac08}).
  \hspace*{-0.1mm}In \hspace*{-0.1mm}particular,
	\hspace*{-0.1mm}since~\hspace*{-0.1mm}the~\hspace*{-0.1mm}\mbox{$p(\cdot)$-Dirichlet~\hspace*{-0.1mm}energy} is Fr\'echet~differentiable~and for every $v,w\in W^{1,p(\cdot)}_D(\Omega)$, it holds that
	\begin{align*}
	 \langle DI(v),w\rangle_{{W^{1,p(\cdot)}_D(\Omega)}}=(\AAA(\cdot,\nabla v),\nabla w)_\Omega\,,
	\end{align*}
	the optimality condition of the primal problem and the strict convexity of the $p(\cdot)$-Dirichlet~energy imply that $u\in W^{1,p(\cdot)}_D(\Omega)$ is the unique minimizer of the $p(\cdot)$-Dirichlet energy.\smallskip
	
	\hspace{5mm}\textit{Dual problem.} \hspace{-0.1mm}Generalizing  \hspace{-0.1mm}in \hspace{-0.1mm}\mbox{\cite[p.\ \hspace{-0.1mm}113 \hspace{-0.1mm}ff.]{ET99}} \hspace{-0.1mm}to \hspace{-0.1mm}the \hspace{-0.1mm}spaces \hspace{-0.1mm}\eqref{eq:spaces}, \hspace{-0.1mm}one~\hspace{-0.1mm}finds~\hspace{-0.1mm}that
		the (Fenchel) dual problem consists in the maximization of the functional $D\colon W^{p'(\cdot)}_{N}(\textup{div};\Omega)\to \mathbb{R}\cup\{-\infty\}$, for every $y\in W^{p'(\cdot)}_N(\textup{div};\Omega)$ defined by
		\begin{align}
			D(y)\coloneqq -\rho_{\varphi^*,\Omega}(y) -I_{\{-f\}}(\textup{div}\,y)\,,\label{eq:pDirichletDual}
		\end{align}
		where $I_{\{-f\}}\colon L^{p'(\cdot)}(\Omega)\to  \mathbb{R}\cup\{+\infty\}$ for every $g\in L^{p'(\cdot)}(\Omega) $ is defined by 
 \begin{align}
  I_{\{-f\}}(g)\coloneqq\begin{cases}
  0&\text{ if }g=-f\text{ a.e. in }\Omega\,,\\+\infty&\text{ else}\,.
  \end{cases}
 \end{align}
		Due to \cite[Rem.\  4.1 (4.21), p.\ 60]{ET99}, the dual problem admits a unique solution $\smash{z\in W^{p'(\cdot)}_N(\textup{div};\Omega)}$, \textit{i.e.}, a maximizer of \eqref{eq:pDirichletDual}, called  \textit{dual solution}, and a \textit{strong duality relation} applies, \textit{i.e.},\enlargethispage{2mm}
 \begin{align*}
  I(u)=D(z)\,.
 \end{align*}
  In addition, there hold the \textit{convex optimality relations} (\textit{cf.} \cite[Rem.\  4.1 (4.22)--(4.25), p.\ 60]{ET99})
		\begin{alignat}{2}
				\textup{div}\,z &=-f&&\quad\text{ a.e.\ in }\Omega\label{eq:pDirichletOptimality1.1}\,,\\ 
 z&=\AAA(\cdot,\nabla u)&&\quad\text{ a.e.\ in } \Omega\,.\label{eq:pDirichletOptimality1.2}
		\end{alignat}
		By the Fenchel--Young identity (\textit{cf.}\ \cite[Prop.\ 5.1, p.\ 21]{ET99}), the relation \eqref{eq:pDirichletOptimality1.2} is equivalent to 
		\begin{align}
			 z\cdot\nabla u=\varphi^*(\cdot,\vert z\vert )+\varphi(\cdot,\vert \nabla u\vert )\quad\textup{ a.e.\ in }\Omega\,.\label{eq:pDirichletOptimality2}
		\end{align}

 \subsection{Discrete $p_h(\cdot)$-Dirichlet problem}

 \hspace{5mm}One important aspect in the numerical approximation of the $p(\cdot)$-Dirichlet~problem~\eqref{eq:pDirichletW1p} consists in the discretization of the $x$-dependent non-linearity. Here, it is convenient to use a simple one-point quadrature rule. More precisely, if $p\in C^0(\overline{\Omega})$ with $p^->1$,~then~we~define~the~element-wise constant variable exponent $p_h\in \mathcal{L}^0(\mathcal{T}_h)$, the generalized $N$-function $\varphi_h\colon\Omega\times\mathbb{R}_{\ge 0}\to \mathbb{R}_{\ge 0}$, and the non-linear~operators ${\AAA_h,F_h,F_h^*\colon\Omega\times\mathbb{R}^d\hspace{-0.05em}\to\hspace{-0.05em} \mathbb{R}^d}$ for every  ${a\in \mathbb{R}^d}$, $T\in  \mathcal{T}_h$, and a.e. $x\in T$~by 
 \begin{align}
  \begin{aligned}
  p_h(x)&\coloneqq p(\xi_T)\,,\qquad
  &&\hspace{-2mm}\varphi_h(x,\vert a\vert )\coloneqq \varphi(\xi_T,\vert a\vert)\,,\\
  \AAA_h(x,a)&\coloneqq \AAA(\xi_T,a)\,,\qquad
  &&F_h(x,a)\coloneqq F(\xi_T,a)\,,\qquad
  F_h^*(x,a)\coloneqq F^*(\xi_T,a)\,,
  \end{aligned}\label{def:A_h}
 \end{align}
 where $\xi_T\in T$ is an arbitrary quadrature point, \textit{e.g.}, the barycenter of the element $T$.

 \begin{remark}\label{rem:uniform}
    Since the hidden constants in all equivalences in Section \ref{sec:basic} depend only on $p^-,p^+>1$ and $\delta\ge 0$ and since $p^-\leq p_h^-\le p^+_h\leq p^+$ a.e. in $\Omega$ for all $h>0$,     
    the same equivalences apply to the discretizations \eqref{def:A_h} with the hidden constants depending~only~on~$p^-,p^+\in (1,\infty)$.
 \end{remark}
 
 \subsubsection{$\mathcal{S}^1_D(\mathcal{T}_h)$-approximation of the $p(\cdot)$-Dirichlet problem}
		
		\qquad Given a right-hand side $f\hspace{-0.1em}\in\hspace{-0.1em} L^{p'(\cdot)}(\Omega)$, where $p\hspace{-0.1em}\in\hspace{-0.1em} C^0(\overline{\Omega})$ with $p^-\hspace{-0.1em}>\hspace{-0.1em}1$,~the~\mbox{$\smash{\mathcal{S}^1_D(\mathcal{T}_h)}$-approximation} of the $p(\cdot)$-Dirichlet problem, where $\mathcal{S}^1_D(\mathcal{T}_h)\vcentcolon=\mathcal{S}^1(\mathcal{T}_h)\cap {W^{1,p(\cdot)}_D(\Omega)}$, seeks for $u_h^{\textit{c}}\in \mathcal{S}^1_D(\mathcal{T}_h)$~such~that for every $v_h\in \mathcal{S}^1_D(\mathcal{T}_h)$,~it~holds~that
		\begin{align}
		 (\AAA_h(\cdot,\nabla u_h^{\textit{c}}),\nabla v_h)_\Omega=(f,v_h)_\Omega\,.\label{eq:pDirichletS1D}
		\end{align}
		The theory of monotone operators (\textit{cf.}\ \cite{Zei90B}) 
        proves the existence of a unique~solution~to~\eqref{eq:pDirichletS1D}. In what follows, we reserve~the~notation~$\smash{u_h^{\textit{c}}\in \mathcal{S}^1_D(\mathcal{T}_h)}$~for~this~solution. 
 
  In \cite{BDS15}, the following best-approximation result was derived:
		
		\begin{theorem}[Best-approximation]\label{P1_best-approx}
  Let $p\in C^{0,\alpha}(\overline{\Omega})$ with $\alpha\in (0,1]$ and $p^->1$~and~let~$\delta\ge 0$. 
  Then, there exists some $s>1$, which can chosen to be close to $1$ if $h_{\max}>0$ is close~to~$0$,~such~that if 
    $u\in W^{1,p(\cdot)s}(\Omega)$, then 
		 \begin{align*}
		 \|F_h(\cdot, \nabla u_h^{\textit{c}})- F_h(\cdot, \nabla u)\|_{2,\Omega}^2\lesssim \inf_{v_h\in \mathcal{S}^1_D(\mathcal{T}_h)}{\|F_h(\cdot, \nabla v_h)-F_h(\cdot,\nabla u)\|_{2,\Omega}^2}+h_{\max}^{2\alpha}\,\big(1+\rho_{p(\cdot)s,\Omega}(\nabla u)\big)\,,
		 \end{align*}
         where the hidden constant in $\lesssim$ also depends on $s>1$ and the chunkiness $\omega_0>0$. 
		\end{theorem}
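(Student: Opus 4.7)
The plan is to exploit the natural-distance equivalence from Proposition~\ref{lem:hammer} (applied to $\AAA_h$ and $\varphi_h$, which is permitted by Remark~\ref{rem:uniform}) and then pass from the discrete Galerkin equation \eqref{eq:pDirichletS1D} to the continuous one \eqref{eq:pDirichletW1p} at the cost of a consistency error measuring the difference $\AAA-\AAA_h$. Concretely, I would start from
\begin{align*}
\|F_h(\cdot,\nabla u_h^{\textit{c}})-F_h(\cdot,\nabla u)\|_{2,\Omega}^2 \sim \bigl(\AAA_h(\cdot,\nabla u_h^{\textit{c}})-\AAA_h(\cdot,\nabla u),\nabla u_h^{\textit{c}}-\nabla u\bigr)_\Omega\,,
\end{align*}
and, for an arbitrary $v_h\in\mathcal{S}^1_D(\mathcal{T}_h)$, split the second factor as $\nabla u_h^{\textit{c}}-\nabla u=\nabla(u_h^{\textit{c}}-v_h)+\nabla(v_h-u)$.

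For the first piece I would use Galerkin orthogonality: inserting $u_h^{\textit{c}}-v_h$ as a test function in both \eqref{eq:pDirichletS1D} and \eqref{eq:pDirichletW1p} yields
\begin{align*}
\bigl(\AAA_h(\cdot,\nabla u_h^{\textit{c}})-\AAA_h(\cdot,\nabla u),\nabla(u_h^{\textit{c}}-v_h)\bigr)_\Omega=\bigl((\AAA-\AAA_h)(\cdot,\nabla u),\nabla(u_h^{\textit{c}}-v_h)\bigr)_\Omega\,,
\end{align*}
which isolates the consistency term $\AAA-\AAA_h$ on the right. For the second piece I would apply the $\varepsilon$-Young inequality \eqref{ineq:young} with the family $(\varphi_h)_{|\nabla u|}$ and its conjugate, invoking \eqref{eq:hammera} and Remark~\ref{rem:natural_dist}, so that it is controlled by $\varepsilon\,\|F_h(\cdot,\nabla u_h^{\textit{c}})-F_h(\cdot,\nabla u)\|_{2,\Omega}^2+c_\varepsilon\,\|F_h(\cdot,\nabla v_h)-F_h(\cdot,\nabla u)\|_{2,\Omega}^2$; the first summand can then be absorbed into the left-hand side for $\varepsilon$ small. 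The same device, combined with the bound on $\nabla(u_h^{\textit{c}}-v_h)$, reduces everything to estimating the consistency term.

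The decisive step is the consistency estimate. Since $p\in C^{0,\alpha}(\overline{\Omega})$ and $p_h$ is a one-point evaluation on each element, one has $|p(x)-p_h(x)|\lesssim h_{\max}^\alpha$ uniformly. Writing $(\delta+|a|)^{p(x)-2}-(\delta+|a|)^{p_h(x)-2}$ via the mean-value theorem produces a factor $|p(x)-p_h(x)|\,|\log(\delta+|a|)|$ multiplied by $(\delta+|a|)^{\xi(x)-2}$ with $\xi$ between $p$ and $p_h$. Estimating the logarithm by $c_{\varepsilon'}(\delta+|a|)^{\varepsilon'}$ and choosing $\varepsilon'$ of order $h_{\max}^\alpha$ (so that $s\coloneqq 1+\varepsilon'/p^-$ is close to $1$ when $h_{\max}$ is small), I obtain
\begin{align*}
|(\AAA-\AAA_h)(\cdot,\nabla u)|\lesssim h_{\max}^\alpha\,\bigl(1+(\delta+|\nabla u|)^{p(\cdot)-1}\bigr)^{s/(s(p(\cdot)-1)/(p(\cdot)-1))}\,,
\end{align*}
which, after pairing with $\nabla(u_h^{\textit{c}}-v_h)$ via Hölder's inequality in $L^{p_h(\cdot)'}$-$L^{p_h(\cdot)}$ and using the $\varepsilon$-Young inequality for the conjugate generalized $N$-function $(\varphi_h)^*$, yields the bound $h_{\max}^{2\alpha}\bigl(1+\rho_{p(\cdot)s,\Omega}(\nabla u)\bigr)$ plus a further small multiple of the left-hand side.

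The main obstacle is precisely the interplay between the logarithmic factor from differentiating in the exponent and the higher integrability threshold $s>1$: one has to choose $\varepsilon'$ small enough that $s$ remains close to $1$ (forcing $h_{\max}$ small) while simultaneously extracting the full power $h_{\max}^\alpha$ and keeping the implicit constant independent of $h$. Once this is handled, combining the estimates, absorbing the $\varepsilon$-terms into the left, and taking the infimum over $v_h\in\mathcal{S}^1_D(\mathcal{T}_h)$ delivers the claimed inequality.
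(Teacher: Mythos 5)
A point of comparison first: the paper does not prove Theorem \ref{P1_best-approx} from scratch --- its ``proof'' is a citation of \cite[Lem.\ 4.7]{BDS15} together with the observation that the particular quadrature point $\xi_T$ used there is immaterial. Your outline essentially reconstructs that argument, and its skeleton is right: the equivalence \eqref{eq:hammera} applied to $\AAA_h$ (legitimate by Remark \ref{rem:uniform}), the splitting $\nabla u_h^{\textit{c}}-\nabla u=\nabla(u_h^{\textit{c}}-v_h)+\nabla(v_h-u)$, Galerkin orthogonality between \eqref{eq:pDirichletS1D} and \eqref{eq:pDirichletW1p} isolating the consistency term $(\AAA-\AAA_h)(\cdot,\nabla u)$, and the shifted $\varepsilon$-Young inequality for the best-approximation part are exactly the right ingredients.

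The genuine gap is in how you pair the consistency term with $\nabla(u_h^{\textit{c}}-v_h)$: Hölder in $L^{p_h'(\cdot)}$--$L^{p_h(\cdot)}$ followed by $\varepsilon$-Young with the \emph{unshifted} conjugate $\varphi_h^*$ does not close the argument. The resulting $\varepsilon$-term is $\varepsilon\,\rho_{\varphi_h,\Omega}(\nabla(u_h^{\textit{c}}-v_h))$, which for $p^-<2$ is not controlled by the shifted quantities $\|F_h(\cdot,\nabla u_h^{\textit{c}})-F_h(\cdot,\nabla u)\|_{2,\Omega}^2+\|F_h(\cdot,\nabla v_h)-F_h(\cdot,\nabla u)\|_{2,\Omega}^2$ (there $\varphi_a(t)\lesssim\varphi(t)$, not the reverse), so it cannot be absorbed into the left-hand side; moreover, for $p^+>2$ one only has $\varphi_h^*(\lambda\,t)\lesssim\lambda^{p_h'}\varphi_h^*(t)$, so the factor $h_{\max}^{\alpha}$ would only deliver $h_{\max}^{\alpha (p^+)'}<h_{\max}^{2\alpha}$, i.e., a degraded rate. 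The correct device is the $\varepsilon$-Young inequality with the shift $\psi=(\varphi_h)_{\vert\nabla u\vert}$: then the $\varepsilon$-term is $\rho_{(\varphi_h)_{\vert\nabla u\vert},\Omega}(\nabla(u_h^{\textit{c}}-v_h))$, which by \eqref{eq:hammera} and the shift change \eqref{lem:shift-change.1} is absorbable, while the $c_\varepsilon$-term is $\rho_{((\varphi_h)_{\vert\nabla u\vert})^*,\Omega}(\vert\AAA_h(\cdot,\nabla u)-\AAA(\cdot,\nabla u)\vert)$, which by \eqref{eq:hammerg}--\eqref{eq:hammerh} is comparable to $\|F_h^*(\cdot,\AAA_h(\cdot,\nabla u))-F_h^*(\cdot,\AAA(\cdot,\nabla u))\|_{2,\Omega}^2$ and hence bounded by $h_{\max}^{2\alpha}\,(1+\rho_{p(\cdot)s,\Omega}(\nabla u))$ via Lemma \ref{lem:Ax-Axh}\eqref{eq:Axh-Ax} (equivalently Proposition \ref{lem:A-Ah}\eqref{eq:Ah-A}); this is also precisely where the logarithm from differentiating in the exponent gets absorbed into the $s$-power, as you anticipate. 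Your mean-value-theorem bound on $\vert\AAA-\AAA_h\vert$ (whose stated exponent is in any case garbled) is the right raw estimate, but it must be fed through the shifted conjugate rather than through Hölder with unshifted exponents.
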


		\begin{proof} In \cite[Lem.\  4.7]{BDS15}, only the case $\xi_T\coloneqq \textrm{arg\,min}_{x\in T}{p(x)}$ for all $T\in \mathcal{T}_h$ has been considered. However, an  analysis of the proof of \cite[Lem.\  4.7]{BDS15} reveals that this particular quadrature rule is not needed there.
		\end{proof}

        Resorting \hspace{-0.1mm}in \hspace{-0.1mm}Theorem \hspace{-0.1mm}\ref{P1_best-approx} \hspace{-0.1mm}to \hspace{-0.1mm}the \hspace{-0.1mm}approximation \hspace{-0.1mm}properties \hspace{-0.1mm}of \hspace{-0.1mm}the \hspace{-0.1mm}\textit{Scott--Zhang~\hspace{-0.1mm}quasi-interpolation \hspace{-0.1mm}operator}  $\Pi_h^{\textit{sz}}\colon {W^{1,p(\cdot)}_D(\Omega)}\hspace*{-0.15em}\to\hspace*{-0.15em}\mathcal{S}^1_D(\mathcal{T}_h)$ (\textit{cf.}\ \cite{SZ90}), one arrives at  the following \textit{a priori} error estimate~for the $\mathcal{S}^1_D(\mathcal{T}_h)$-approximation \eqref{eq:pDirichletS1D} of the $p(\cdot)$-Dirichlet problem \eqref{eq:pDirichletW1p}.
		
		\begin{theorem}[\textit{a priori} error estimate]\label{P1_apriori}
		 Let $p\in C^{0,\alpha}(\overline{\Omega})$ with $\alpha\in (0,1]$ and $p^->1$~and~let~$\delta\ge 0$.  Moreover, let $F(\cdot,\nabla u)\in W^{1,2}(\Omega;\mathbb{R}^d)$.  
        Then, there exists some $s>1$, which can chosen to be close to $1$ if $h_{\max}>0$ is close to $0$, such that 
		 \begin{align*}
		 \|F_h(\cdot,\nabla u_h^{\textit{c}})-F_h(\cdot,\nabla u)\|_{2,\Omega}^2\lesssim h_{\max}^2\,\|\nabla F(\cdot,\nabla u)\|_{2,\Omega}^2+h_{\max}^{2\alpha}\,\big(1+\rho_{p(\cdot)s,\Omega}(\nabla u)\big)\,,
		 \end{align*}
        where the hidden constant in $\lesssim$ also depends on $s>1$ and the chunkiness $\omega_0>0$.\enlargethispage{1mm}
		\end{theorem}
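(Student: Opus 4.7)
The plan is to deduce the claim from the best-approximation estimate of Theorem \ref{P1_best-approx} by inserting the Scott--Zhang quasi-interpolant $\Pi_h^{\textit{sz}}u\in\mathcal{S}^1_D(\mathcal{T}_h)$ into the infimum on its right-hand side and then controlling the resulting interpolation error. First I would choose $v_h=\Pi_h^{\textit{sz}}u$ in Theorem \ref{P1_best-approx}, which reduces the task to estimating
\[
\|F_h(\cdot,\nabla\Pi_h^{\textit{sz}}u)-F_h(\cdot,\nabla u)\|_{2,\Omega}^2,
\]
modulo the remainder $h_{\max}^{2\alpha}\,(1+\rho_{p(\cdot)s,\Omega}(\nabla u))$ already present in the theorem.

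Next I would pass from the discretised non-linearity $F_h$ to $F$. Since $F_h(\cdot,a)|_T=F(\xi_T,a)$ and $|p(x)-p(\xi_T)|\lesssim h_T^{\alpha}$ thanks to $p\in C^{0,\alpha}(\overline{\Omega})$, a pointwise comparison of $F(x,a)$ and $F(\xi_T,a)$ based on differentiating the exponent, together with Hölder's inequality with a slightly enlarged integrability $s>1$ to absorb logarithmic factors, yields
\[
\|F_h(\cdot,\nabla v)-F(\cdot,\nabla v)\|_{2,\Omega}^2\lesssim h_{\max}^{2\alpha}\bigl(1+\rho_{p(\cdot)s,\Omega}(\nabla v)\bigr)
\]
for any admissible $v$; this is precisely the kind of estimate appearing in the proof of Theorem \ref{P1_best-approx} (see \cite{BDS15}). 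Applying it to $v=u$ and $v=\Pi_h^{\textit{sz}}u$, together with the triangle inequality in $L^2$, reduces matters to bounding $\|F(\cdot,\nabla\Pi_h^{\textit{sz}}u)-F(\cdot,\nabla u)\|_{2,\Omega}^2$ and to controlling $\rho_{p(\cdot)s,\Omega}(\nabla\Pi_h^{\textit{sz}}u)$.

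For the first of these I would invoke the standard Orlicz-type approximation property of $\Pi_h^{\textit{sz}}$ in the natural distance, which, combined with the equivalence $|F(x,a)-F(x,b)|^2\sim\varphi_{|a|}(x,|a-b|)$ from Proposition \ref{lem:hammer} and an element-wise Poincaré inequality on the patches $\omega_T$, gives
\[
\|F(\cdot,\nabla\Pi_h^{\textit{sz}}u)-F(\cdot,\nabla u)\|_{2,\Omega}^2\lesssim h_{\max}^2\,\|\nabla F(\cdot,\nabla u)\|_{2,\Omega}^2,
\]
where the hypothesis $F(\cdot,\nabla u)\in W^{1,2}(\Omega;\mathbb{R}^d)$ is used precisely here. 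The modular term $\rho_{p(\cdot)s,\Omega}(\nabla\Pi_h^{\textit{sz}}u)$ is controlled by $1+\rho_{p(\cdot)s,\Omega}(\nabla u)$ via the $W^{1,p(\cdot)s}$-stability of the Scott--Zhang operator on patches. Collecting the three contributions then produces the asserted bound.

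The principal technicality is the $F_h\to F$ transition of the second step. The pointwise inequality needs a careful analysis of how $(\delta+|a|)^{(p(x)-2)/2}a$ depends on $p(x)$; differentiating in the exponent produces a factor of the form $|p(x)-p(\xi_T)|\,|\log(\delta+|a|)|$, whose logarithm must be absorbed by the passage to the slightly enlarged integrability $s>1$, which in turn forces $h_{\max}$ to be sufficiently small so that $s$ stays close to $1$. Once this logarithmic estimate is available, the remainder of the argument is purely structural and a direct consequence of the already quoted approximation and stability properties of $\Pi_h^{\textit{sz}}$.
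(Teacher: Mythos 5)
Your proposal is correct and takes essentially the same route as the paper: the paper's proof is a citation of \cite[Thm.\ 4.8]{BDS15}, whose argument is precisely yours, namely inserting the Scott--Zhang quasi-interpolant into the best-approximation estimate of Theorem \ref{P1_best-approx} and invoking the natural-distance interpolation and modular stability properties of $\Pi_h^{\textit{sz}}$ together with the $F_h$-versus-$F$ comparison of Proposition \ref{lem:A-Ah}. The only cosmetic imprecision is that, for variable exponents, the Scott--Zhang approximation estimate itself already produces the extra $h_{\max}^{2\alpha}\,(1+\rho_{p(\cdot)s,\Omega}(\nabla u))$ correction (compare \eqref{cor:rate_improve.2.0}) rather than the clean bound $h_{\max}^2\,\|\nabla F(\cdot,\nabla u)\|_{2,\Omega}^2$ you state, but this term is of exactly the form already present on the asserted right-hand side, so nothing is lost.
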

		
		\begin{proof}
		 See \cite[Thm.\ 4.8]{BDS15}.
		\end{proof}

        \begin{remark}\label{rem:ps}
            If $F(\cdot,\nabla u)\in W^{1,2}(\Omega;\mathbb{R}^d)$, then  $F(\cdot,\nabla u)\in L^{2^*}(\Omega;\mathbb{R}^d)$, where $2^*\coloneqq \frac{2d}{d-2}$ if $2<d$ and $2^*\in [1,\infty)$ if $2\ge d$. As a result, for $s>1$ close to $1$, \textit{i.e.}, $h_{\max}>0$  close to $0$,~it~holds~that $\nabla u\in L^{p(\cdot)s}(\Omega;\mathbb{R}^d)$. 
            Similarly, if $F^*(\cdot,z)\in W^{1,2}(\Omega;\mathbb{R}^d)$, then for $s>1$ close to $1$, \textit{i.e.}, $h_{\max}>0$  close to $0$, it holds that $z \in L^{p'(\cdot)s}(\Omega;\mathbb{R}^d)$. 
        \end{remark}

 \subsection{$\smash{\mathcal{S}^{1,\textit{cr}}_D(\mathcal{T}_h)}$-approximation of the $p(\cdot)$-Dirichlet problem}

		 \hspace{5mm}Given a right-hand side $f\hspace{-0.05em}\in \hspace{-0.05em}L^{p'(\cdot)}(\Omega)$, $p\hspace{-0.05em}\in\hspace{-0.05em} C^0(\overline{\Omega})$ with $p^-\hspace{-0.05em}>\hspace{-0.05em}1$, and setting ${f_h\hspace{-0.05em}\coloneqq \hspace{-0.05em}\Pi_h f\hspace{-0.05em}\in \hspace{-0.05em}\mathcal{L}^0(\mathcal{T}_h)}$,
  the $\mathcal{S}^{1,\textit{cr}}_D(\mathcal{T}_h)$-approximation of the $p(\cdot)$-Dirichlet problem seeks for $u_h^{\textit{cr}}\in \mathcal{S}^{1,\textit{cr}}_D(\mathcal{T}_h)$ such that for every $v_h\in \mathcal{S}^{1,\textit{cr}}_D(\mathcal{T}_h)$, it holds that
		\begin{align}
		 (\AAA_h(\cdot,\nabla_{\!h}u_h^{\textit{cr}}),\nabla_{\! h} v_h)_\Omega=(f_h,\Pi_h v_h)_\Omega\,.\label{eq:pDirichletS1crD}
		\end{align}
		The theory of monotone operators (\textit{cf.}\ \cite{Zei90B}) 
        proves the existence of a unique~solution~to~\eqref{eq:pDirichletS1crD}. In what follows, we reserve~the~notation~$u_h^{\textit{cr}}\in \smash{\mathcal{S}^{1,\textit{cr}}_D(\mathcal{T}_h)}$~for~this~solution. 
		\subsubsection{Discrete minimization problem and discrete convex duality relations}\enlargethispage{5mm}
		
		\hspace{5mm}The discrete variational problem \eqref{eq:pDirichletS1crD} emerges as an optimality condition of an equivalent convex minimization problem.
		
		\hspace{5mm}\textit{Discrete primal problem.} The problem \eqref{eq:pDirichletS1crD} is equivalent to the minimization of the \textit{discrete $p_h(\cdot)$-Dirichlet energy} ${I_h^{\textit{cr}} \colon \mathcal{S}^{1,\textit{cr}}_D(\mathcal{T}_h) \to \mathbb{R}}$, for every $v_h\in \smash{\mathcal{S}^{1,\textit{cr}}_D(\mathcal{T}_h)}$ defined by
	 \begin{align}\label{eq:pDirichletPrimalCR}
		 I_h^{\textit{cr}}(v_h)\coloneqq \rho_{\varphi_h,\Omega}(\nabla_{\!h} v_h)-(f_h,\Pi_hv_h)_\Omega\,.
		\end{align}
		Hereinafter, we refer to the minimization of the discrete $p_h(\cdot)$-Dirichlet energy \eqref{eq:pDirichletPrimalCR}~as~the~\textit{discrete primal problem}.
		Since the discrete $p_h(\cdot)$-Dirichlet energy \eqref{eq:pDirichletPrimalCR} is proper, 
		strictly convex, weakly coercive, 
		and lower semi-continuous, the existence of a unique minimizer of \eqref{eq:pDirichletPrimalCR}, called \textit{discrete primal solution}, follows using 
  the direct method in the calculus of variations (\textit{cf.}\ \cite{Dac08}). More precisely,
	since the discrete $p_h(\cdot)$-Dirichlet energy \eqref{eq:pDirichletPrimalCR} is Fr\'echet differentiable and for every $v_h,w_h\in \smash{\mathcal{S}^{1,\textit{cr}}_D(\mathcal{T}_h)}$,~it~holds~that
	\begin{align*}
	 \langle DI_h^{\textit{cr}}(v_h),w_h\rangle_{\smash{\mathcal{S}^{1,\textit{cr}}_D(\mathcal{T}_h)}}=(\AAA_h(\cdot,\nabla_{\!h} v_h),\nabla_{\! h} w_h)_\Omega\,,
	\end{align*}
	the \hspace{-0.2mm}optimality \hspace{-0.2mm}condition \hspace{-0.2mm}of \hspace{-0.2mm}the \hspace{-0.2mm}discrete \hspace{-0.2mm}primal \hspace{-0.2mm}problem \hspace{-0.2mm}and \hspace{-0.2mm}convexity \hspace{-0.2mm}of \hspace{-0.2mm}the \hspace{-0.2mm}discrete~\hspace{-0.2mm}\mbox{$p_h(\cdot)$-Dirichlet} energy~\eqref{eq:pDirichletPrimalCR} imply that $\smash{u_h^{\textit{cr}}\in \smash{\mathcal{S}^{1,\textit{cr}}_D(\mathcal{T}_h)}}$ solves the discrete primal problem, \textit{i.e.}, is the unique minimizer of the discrete $p_h(\cdot)$-Dirichlet energy~\eqref{eq:pDirichletPrimalCR}.
	
		\hspace{5mm}\textit{Discrete dual problem.} The discrete dual problem consists in the maximization of the functional $D_h^{\textit{rt}}:\mathcal{R}T^0_N(\mathcal{T}_h)\to \mathbb{R}\cup\{-\infty\}$, for every $y_h\in \mathcal{R}T^0_N(\mathcal{T}_h)$ defined by
	\begin{align}
		D_h^{\textit{rt}}(y_h)\coloneqq -\rho_{\varphi_h^*,\Omega}(\Pi_hy_h)-I_{\{-f_h\}}(\textup{div}\,y_h)\,.\label{eq:pDirichletDualCR}
	\end{align}
 The following proposition establishes the well-posedness of the discrete dual problem, \textit{i.e.}, the existence of a maximizer, called \textit{discrete dual solution},  and
 discrete strong duality.~In~addition, it provides a reconstruction formula for~this~maximizer from the discrete primal solution.

 \begin{proposition}\label{prop:discrete_convex_duality}
 The following statements apply:
 \begin{itemize}[noitemsep,topsep=1pt,labelwidth=\widthof{(ii)},leftmargin=!]
 \item[(i)] There holds a \textup{discrete weak duality relation}, \textit{i.e.}, it holds that
 \begin{align}
  \inf_{v_h\in \mathcal{S}^{1,\textit{cr}}_D(\mathcal{T}_h)}{I_h^{\textit{cr}}(v_h)}\ge \sup_{y_h\in \mathcal{R}T^0_N(\mathcal{T}_h)}{D_h^{\textit{rt}}(y_h)}\,.\label{eq:discrete_weak_duality}
 \end{align}
  \item[(ii)] The discrete flux $z_h^{\textit{rt}}\in \mathcal{L}^1(\mathcal{T}_h)^d$, defined via the \textup{generalized Marini formula}
  \begin{align}
		z_h^{\textit{rt}}= \AAA_h(\cdot,\nabla_{\! h}u_h^{\textit{cr}})-\frac{f_h}{d}\big(\textup{id}_{\mathbb{R}^d}-\Pi_h\textup{id}_{\mathbb{R}^d}\big)\quad\text{ in } \mathcal{R}T^0_N(\mathcal{T}_h)\,,\label{eq:gen_marini}
	\end{align}
 satisfies $z_h^{\textit{rt}}\in \mathcal{R}T^0_N(\mathcal{T}_h)$ and the \textup{discrete convex optimality relations}
 \begin{alignat}{2}
		\textup{div}\,z_h^{\textit{rt}}&=-f_h&&\quad\text{ a.e.\ in }\Omega\,,\label{eq:pDirichletOptimalityCR1.1}\\
 \Pi_hz_h^{\textit{rt}}&=\AAA_h(\cdot,\nabla_{\! h}u_h^{\textit{cr}})&&\quad\text{ a.e.\ in }\Omega\,.\label{eq:pDirichletOptimalityCR1.2}
	\end{alignat}
 \item[(iii)] The discrete flux $z_h^{\textit{rt}}\in \mathcal{R}T^0_N(\mathcal{T}_h)$ is the unique maximizer of \eqref{eq:pDirichletDualCR} and  a \textup{discrete strong duality relation} applies, \textit{i.e.}, it holds that
 \begin{align}\label{eq:discrete_strong_duality}
     I_h^{\textit{cr}}(u_h^{\textit{cr}})=D_h^{\textit{rt}}(z_h^{\textit{rt}})\,.
 \end{align}
 \end{itemize}
 \end{proposition}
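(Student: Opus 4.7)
The plan is to verify the three parts sequentially. The main tools are the discrete integration--by--parts formula \eqref{eq:pi0}, the pointwise Fenchel--Young inequality/identity for the pair $(\varphi_h,\varphi_h^*)$, the discrete variational formulation \eqref{eq:pDirichletS1crD} of $u_h^{\textit{cr}}$, and (for uniqueness in~(iii)) the elementwise reconstruction implicit in the discrete Helmholtz decomposition \eqref{eq:decomposition}.

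For the weak duality~(i), if $y_h\in \mathcal{R}T^0_N(\mathcal{T}_h)$ satisfies $\textup{div}\,y_h\neq -f_h$, then $D_h^{\textit{rt}}(y_h)=-\infty$ and the estimate is trivial. Otherwise, \eqref{eq:pi0} yields $(f_h,\Pi_h v_h)_\Omega=(\nabla_{\!h}v_h,\Pi_h y_h)_\Omega$ for every $v_h\in \mathcal{S}^{1,\textit{cr}}_D(\mathcal{T}_h)$, and the pointwise Fenchel--Young inequality applied to $\nabla_{\!h}v_h\cdot \Pi_h y_h$ gives
\begin{align*}
I_h^{\textit{cr}}(v_h)=\rho_{\varphi_h,\Omega}(\nabla_{\!h}v_h)-(\nabla_{\!h}v_h,\Pi_h y_h)_\Omega\ge -\rho_{\varphi_h^*,\Omega}(\Pi_h y_h)=D_h^{\textit{rt}}(y_h)\,.
\end{align*}
Taking the infimum in $v_h$ and the supremum in $y_h$ yields \eqref{eq:discrete_weak_duality}.

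For the generalized Marini formula~(ii), the identity \eqref{eq:pDirichletOptimalityCR1.2} is immediate from $\Pi_h(\textup{id}_{\mathbb{R}^d}-\Pi_h\textup{id}_{\mathbb{R}^d})=0$, and an elementwise calculation exploiting $\textup{div}(\textup{id}_{\mathbb{R}^d}-\Pi_h\textup{id}_{\mathbb{R}^d})|_T=d$ together with the piecewise constancy of $\AAA_h(\cdot,\nabla_{\!h} u_h^{\textit{cr}})$ gives $\textup{div}\,z_h^{\textit{rt}}|_T=-f_h|_T$ for each $T\in \mathcal{T}_h$, so that \eqref{eq:pDirichletOptimalityCR1.1} holds once $z_h^{\textit{rt}}\in H(\textup{div};\Omega)$ is known. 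The crucial step is therefore the membership $z_h^{\textit{rt}}\in \mathcal{R}T^0_N(\mathcal{T}_h)$. Elementwise, $z_h^{\textit{rt}}|_T$ lies in the local Raviart--Thomas space $\mathcal{P}_0(T)^d+\mathcal{P}_0(T)\,\textup{id}_{\mathbb{R}^d}$, so its normal component on any side $S\subseteq \partial T$ is automatically constant (since $x\cdot n_S$ is constant on~$S$). It thus suffices to prove that $\jump{z_h^{\textit{rt}}\cdot n}_S(x_S)=0$ for every interior side and $(z_h^{\textit{rt}}\cdot n)(x_S)=0$ for every Neumann side. I would combine elementwise integration by parts of $(z_h^{\textit{rt}},\nabla_{\!h} v_h)_\Omega$ with the identity $(z_h^{\textit{rt}},\nabla_{\!h} v_h)_\Omega=(\Pi_h z_h^{\textit{rt}},\nabla_{\!h} v_h)_\Omega=(f_h,\Pi_h v_h)_\Omega$ (following from \eqref{eq:pDirichletOptimalityCR1.2} and \eqref{eq:pDirichletS1crD}) and the trivial identity $(f_h,\Pi_h v_h-v_h)_\Omega=0$; this reduces the whole boundary contribution to
\begin{align*}
\sum_{S\in \mathcal{S}_h^{i}}|S|\,\jump{z_h^{\textit{rt}}\cdot n}_S(x_S)\,v_h(x_S)+\sum_{S\in \mathcal{S}_h\cap \Gamma_N}|S|\,(z_h^{\textit{rt}}\cdot n)(x_S)\,v_h(x_S)=0
\end{align*}
for every $v_h\in \mathcal{S}^{1,\textit{cr}}_D(\mathcal{T}_h)$. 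Testing against the Crouzeix--Raviart nodal basis function attached to a single interior or Neumann side then localizes the sum and forces the corresponding normal jump/trace to vanish.

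For the strong duality and uniqueness~(iii), with (ii) established the Fenchel--Young identity becomes a pointwise equality for the pair $(\nabla_{\!h} u_h^{\textit{cr}},\Pi_h z_h^{\textit{rt}})=(\nabla_{\!h} u_h^{\textit{cr}},\AAA_h(\cdot,\nabla_{\!h} u_h^{\textit{cr}}))$, so
\begin{align*}
\rho_{\varphi_h,\Omega}(\nabla_{\!h} u_h^{\textit{cr}})+\rho_{\varphi_h^*,\Omega}(\Pi_h z_h^{\textit{rt}})=(\nabla_{\!h} u_h^{\textit{cr}},\Pi_h z_h^{\textit{rt}})_\Omega=(f_h,\Pi_h u_h^{\textit{cr}})_\Omega\,,
\end{align*}
where the second equality uses \eqref{eq:pi0} with $v_h=u_h^{\textit{cr}}$ and $y_h=z_h^{\textit{rt}}$ together with $\textup{div}\,z_h^{\textit{rt}}=-f_h$. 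Rearranging yields \eqref{eq:discrete_strong_duality}, which combined with~(i) identifies $z_h^{\textit{rt}}$ as a maximizer. For uniqueness, any other maximizer $y_h\in\mathcal{R}T^0_N(\mathcal{T}_h)$ must satisfy $\textup{div}\,y_h=-f_h$ and, by strict convexity of $\varphi_h^*(x,\cdot)$ together with pointwise equality in Fenchel--Young, also $\Pi_h y_h=\AAA_h(\cdot,\nabla_{\!h} u_h^{\textit{cr}})=\Pi_h z_h^{\textit{rt}}$; the elementwise algebraic argument underpinning \eqref{eq:gen_marini} then forces $y_h=z_h^{\textit{rt}}$. I expect the membership $z_h^{\textit{rt}}\in \mathcal{R}T^0_N(\mathcal{T}_h)$ in~(ii) to be the main obstacle; parts~(i) and~(iii) reduce to essentially routine combinations of Fenchel--Young, \eqref{eq:pi0}, and the variational formulation.
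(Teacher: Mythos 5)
Your proposal is correct, but in the two places where there is real content it takes a different route from the paper. For the membership $z_h^{\textit{rt}}\in \mathcal{R}T^0_N(\mathcal{T}_h)$ in (ii) — which you rightly identify as the crux — the paper does not verify the normal-trace conditions directly: it picks any $y_h\in\mathcal{R}T^0_N(\mathcal{T}_h)$ with $\textup{div}\,y_h=-f_h$ (surjectivity of $\textup{div}$ onto $\mathcal{L}^0(\mathcal{T}_h)$, using $\vert\Gamma_D\vert>0$), observes that $z_h^{\textit{rt}}-y_h$ is element-wise constant and orthogonal to $\nabla_{\!h}(\mathcal{S}^{1,\textit{cr}}_D(\mathcal{T}_h))$, and then invokes the discrete Helmholtz decomposition \eqref{eq:decomposition}/\eqref{eq:decomposition.1} to conclude $z_h^{\textit{rt}}-y_h\in\mathcal{R}T^0_N(\mathcal{T}_h)$. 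Your argument instead checks locally that $z_h^{\textit{rt}}|_T$ lies in the local lowest-order Raviart--Thomas space with side-wise constant normal components, reduces the global identity $(z_h^{\textit{rt}},\nabla_{\!h}v_h)_\Omega=(f_h,v_h)_\Omega$ by element-wise integration by parts to a sum of normal jumps weighted by $\vert S\vert\,v_h(x_S)$, and localizes with the side-based Crouzeix--Raviart basis; this is the classical Marini-type verification, more elementary and independent of \eqref{eq:decomposition}, whereas the paper's route reuses machinery it needs anyway and avoids computing traces. In (i) your direct pointwise Fenchel--Young estimate (after discarding infeasible $y_h$) replaces the paper's longer inf--sup/biconjugation chain with the same substance, and in (iii) your uniqueness argument via the equality case of Fenchel--Young plus the fact that an $\mathcal{R}T^0$ field is determined element-wise by its mean and its divergence is, if anything, a more explicit justification than the paper's brief appeal to strict convexity of \eqref{eq:pDirichletDualCR}. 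All the steps you sketch (constancy of $x\cdot n$ on planar sides, $\textup{div}(\textup{id}_{\mathbb{R}^d}-\Pi_h\textup{id}_{\mathbb{R}^d})|_T=d$, the midpoint rule and CR continuity at $x_S$, $(f_h,v_h-\Pi_h v_h)_\Omega=0$) check out, so the plan closes.
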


By the Fenchel--Young identity (\textit{cf.}\ \cite[Prop.\ 5.1, p. 21]{ET99}), the relation \eqref{eq:pDirichletOptimalityCR1.2}~is~equivalent~to
	\begin{align}\label{eq:pDirichletOptimalityCR2}
	  	\Pi_hz_h^{\textit{rt}}\cdot\nabla_{\! h} u_h^{\textit{cr}}=\varphi_h^*(\cdot,\vert\Pi_hz_h^{\textit{rt}}\vert)+\varphi_h(\cdot,\vert\nabla_{\! h}u_h^{\textit{cr}}\vert)\quad\text{ a.e.\ in }\Omega\,.
	\end{align}
 
 \begin{proof}
 \textit{ad (i).} Using element-wise for each $T\in \mathcal{T}_h$ that $\varphi(\xi_T,\cdot)=\varphi^{**}(\xi_T,\cdot)$, the definition of the convex conjugate, and the discrete integration-by-parts formula \eqref{eq:pi0}, we find that
 \begin{align*}
  \inf_{v_h\in \mathcal{S}^{1,\textit{cr}}_D(\mathcal{T}_h)}{I_h^{\textit{cr}}(v_h)}&=\inf_{v_h\in \mathcal{S}^{1,\textit{cr}}_D(\mathcal{T}_h)}{\rho_{\varphi_h^{**},\Omega}(\nabla_{\! h} v_h)-(f_h,\Pi_h v_h)_\Omega}
 \\
 &=
  \inf_{v_h\in \mathcal{S}^{1,\textit{cr}}_D(\mathcal{T}_h)}{ \sup_{\overline{y}_h\in \mathcal{L}^0(\mathcal{T}_h)^d}{(\overline{y}_h,\nabla_{\! h} v_h)_\Omega-\rho_{\varphi_h^*,\Omega}(\overline{y}_h)-(f_h,\Pi_h v_h)_\Omega}}
 \\&\ge 
  \inf_{v_h\in \mathcal{S}^{1,\textit{cr}}_D(\mathcal{T}_h)}{ \sup_{y_h\in \mathcal{R}T^0_N(\mathcal{T}_h)}{-\rho_{\varphi_h^*,\Omega}(\Pi_h y_h)+(\Pi_h y_h,\nabla_{\! h} v_h)_\Omega-(f_h,\Pi_h v_h)_\Omega}}
  \\&=
  \inf_{v_h\in \mathcal{S}^{1,\textit{cr}}_D(\mathcal{T}_h)}{ \sup_{y_h\in \mathcal{R}T^0_N(\mathcal{T}_h)}{-\rho_{\varphi_h^*,\Omega}(\Pi_h y_h)-(\textup{div}\,y_h+f_h,\Pi_h v_h)_\Omega}}
  \\&\ge 
  \inf_{\overline{v}_h\in \mathcal{L}^0(\mathcal{T}_h)}{ \sup_{y_h\in \mathcal{R}T^0_N(\mathcal{T}_h)}{-\rho_{\varphi_h^*,\Omega}(\Pi_h y_h)-(\textup{div}\,y_h+f_h,\overline{v}_h)_\Omega}}
 \\&\ge 
  \sup_{y_h\in \mathcal{R}T^0_N(\mathcal{T}_h)}{\inf_{\overline{v}_h\in \mathcal{L}^0(\mathcal{T}_h)}{ -\rho_{\varphi_h^*,\Omega}(\Pi_h y_h)-(\textup{div}\,y_h+f_h,\overline{v}_h)_\Omega}}
  \\&=
  \sup_{y_h\in \mathcal{R}T^0_N(\mathcal{T}_h)}{- \rho_{\varphi_h^*,\Omega}(\Pi_h y_h)-\sup_{\overline{v}_h\in \mathcal{L}^0(\mathcal{T}_h)}{(\textup{div}\,y_h+f_h,\overline{v}_h)_\Omega}}
 \\&=
  \sup_{y_h\in \mathcal{R}T^0_N(\mathcal{T}_h)}{-\rho_{\varphi_h^*,\Omega}(\Pi_h y_h)-I_{\{-f_h\}}(\textup{div}\,y_h)}
 \\&=
  \sup_{y_h\in \mathcal{R}T^0_N(\mathcal{T}_h)}{D_h^{\textit{rt}}(y_h)}\,.
 \end{align*}

 \textit{ad (ii).} By definition, the discrete flux $z_h^{\textit{rt}}\in \mathcal{L}^1(\mathcal{T}_h)^d$, defined by \eqref{eq:gen_marini}, satisfies the discrete convex optimality condition \eqref{eq:pDirichletOptimalityCR1.2} and $\textup{div}\,(z_h^{\textit{rt}}|_T)=-f_h|_T$ in $T$ for all $T\in \mathcal{T}_h$. Due to $\vert \Gamma_D\vert >0$, the divergence operator $\textup{div}\colon 
 \mathcal{R}T^0_N(\mathcal{T}_h)\to \mathcal{L}^0(\mathcal{T}_h)$ is surjective. Hence, there exists
  $y_h\in \mathcal{R}T^0_N(\mathcal{T}_h)$ such that $\textup{div}\, y_h=-f_h$ in $\mathcal{L}^0(\mathcal{T}_h)$. Then, we have that $\textup{div}\,((z_h^{\textit{rt}}-y_h)|_T)=0$ in $T$ for all $T\in \mathcal{T}_h$, \textit{i.e.}, $z_h^{\textit{rt}}-y_h\in \mathcal{L}^0(\mathcal{T}_h)^d$. In addition, for every $v_h\in \mathcal{S}^{1,\textit{cr}}_D(\mathcal{T}_h)$, it holds that
 \begin{align*}
  \begin{aligned}
  (\Pi_h y_h,\nabla_{\! h} v_h)_\Omega&=-(\textup{div}\, y_h,\Pi_h v_h)_\Omega\\&=(f_h,\Pi_h v_h)_\Omega\\&=(\AAA_h(\cdot,\nabla_{\! h} u_h^{\textit{cr}}),\nabla_{\! h} v_h)_\Omega
 \\& =(\Pi_h z_h^{\textit{rt}},\nabla_{\! h} v_h)_\Omega\,.
 \end{aligned}
 \end{align*}
 In other words, for every $v_h\in \mathcal{S}^{1,\textit{cr}}_D(\mathcal{T}_h)$, it holds that
 \begin{align*}
  (y_h-z_h^{\textit{rt}},\nabla_{\! h} v_h)_\Omega=(\Pi_h y_h-\Pi_h z_h^{\textit{rt}},\nabla_{\! h} v_h)_\Omega=0\,,
 \end{align*}
 \textit{i.e.}, $y_h-z_h^{\textit{rt}}\in \nabla_{\! h}(\mathcal{S}^{1,\textit{cr}}_D(\mathcal{T}_h))^{\perp}$. By the decomposition \eqref{eq:decomposition}, we have that $\nabla_{\! h}(\mathcal{S}^{1,\textit{cr}}_D(\mathcal{T}_h))^{\perp}=\textup{ker}(\textup{div}|_{\mathcal{R}T^0_N(\mathcal{T}_h)})\subseteq \mathcal{R}T^0_N(\mathcal{T}_h)$. 
 As a result, we have that $y_h-z_h^{\textit{rt}}\in \mathcal{R}T^0_N(\mathcal{T}_h)$.~Due~to~${y_h\in \mathcal{R}T^0_N(\mathcal{T}_h)}$, we conclude that $z_h^{\textit{rt}}\in \mathcal{R}T^0_N(\mathcal{T}_h)$. In particular, now from
 $\textup{div}\,(z_h^{\textit{rt}}|_T)=-f_h|_T$ in $T$ for all $T\in \mathcal{T}_h$, it follows the discrete optimality condition
  \eqref{eq:pDirichletOptimalityCR1.1}.

 \textit{ad (iii).} Using \eqref{eq:pDirichletOptimalityCR2}, \eqref{eq:pDirichletOptimality1.1}, and the discrete integration-by-parts formula \eqref{eq:pi0},~we~find~that\enlargethispage{7mm}
 \begin{align*}
  I_h^{\textit{cr}}(u_h^{\textit{cr}})&=\rho_{\varphi_h,\Omega}(\nabla_{\! h} u_h^{\textit{cr}})-(f_h,\Pi_h u_h^{\textit{cr}})_\Omega
 \\& =-\rho_{\varphi_h^*,\Omega}(\Pi_h z_h^{\textit{rt}})+(\Pi_h z_h^{\textit{rt}},\nabla_{\! h} u_h^{\textit{cr}})_\Omega+(\textup{div}\,z_h^{\textit{rt}},\Pi_h u_h^{\textit{cr}})_\Omega
  \\&=-\rho_{\varphi_h^*,\Omega}(\Pi_h z_h^{\textit{rt}})-I_{\{-f_h\}}(\textup{div}\,z_h^{\textit{rt}})
  \\&=D_h^{\textit{rt}}(z_h^{\textit{rt}})\,,
 \end{align*}
 \textit{i.e.}, the discrete strong duality relation applies, which in conjunction with the discrete~weak~duality relation \eqref{eq:discrete_weak_duality} implies the maximality of $z_h^{\textit{rt}}\in \mathcal{R}T^0_N(\mathcal{T}_h)$ for \eqref{eq:pDirichletDualCR}. Since \eqref{eq:pDirichletDualCR} is strictly convex, $z_h^{\textit{rt}}\in \mathcal{R}T^0_N(\mathcal{T}_h)$ is unique.
 \end{proof}
	
 \newpage

 \subsection{Natural regularity assumption in the case $p\in C^{0,1}(\overline{\Omega})$}\enlargethispage{5mm}
	
	\qquad In this section, for $p\in C^{0,1}(\overline{\Omega})$, we briefly examine 
 the natural regularity assumption 
    \begin{align}\label{natural_regularity}
        F(\cdot,\nabla u)\in W^{1,2}(\Omega;\mathbb{R}^d)\,,
    \end{align}   
    on the solution $u\in W^{1,p(\cdot)}_D(\Omega)$ of \eqref{eq:pDirichletW1p}, which is satisfied under mild assumptions on the domain $\Omega\subseteq\mathbb{R}^d$, $d\in \mathbb{N}$, the exponent $p\hspace*{-0.1em}\in\hspace*{-0.1em} C^{0,1}(\overline{\Omega})$ with $p^-\hspace*{-0.1em}>\hspace*{-0.1em}1$ and the right-hand~side~${f\hspace*{-0.1em}\in\hspace*{-0.1em} \smash{L^{p'(\cdot)}(\Omega)}}$~(\textit{cf.}~\cite{ELM04}; see also
    \cite[Rem.\  4.5]{BDS15}).\enlargethispage{2mm}
	
	\begin{lemma}\label{lem:reg_primal_source}
            Let $p\in C^{0,1}(\overline{\Omega})$ with $p^->1$ and $\delta>0$. Then, for every  $v\in W^{1,p(\cdot)}(\Omega)$ with $F(\cdot,\nabla v)\in W^{1,2}(\Omega;\mathbb{R}^d)$, for $\mu(v)\coloneqq\vert \ln(\delta +\vert \nabla v\vert)\vert^2(\delta +\vert \nabla v\vert)^{p(\cdot)-2}\vert\nabla p\otimes \nabla v\vert^2\in L^1(\Omega)$,~it~holds~that
                \begin{align*}
                   (\delta+\vert \nabla v\vert )^{p(\cdot)-2}\vert \nabla^2 v\vert^2+\mu(v)\sim  \vert \nabla F(\cdot,\nabla v)\vert^2+\mu(v)\quad\text{ a.e.\ in }\Omega\,.
                \end{align*}
		\end{lemma}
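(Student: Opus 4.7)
The strategy is to compute $\nabla_{x}[F(\cdot,\nabla v)]$ pointwise by the chain rule and to isolate the contribution coming from differentiating the exponent $p$ (the ``log term'') from the frozen-exponent part. Writing $F(x,\xi)=(\delta+\vert\xi\vert)^{(p(x)-2)/2}\xi$, for a.e.\ $x\in\Omega$ and every $k\in\{1,\dots,d\}$, we split
\[
\partial_{x_k}[F(\cdot,\nabla v)]=P_k+Q_k,
\]
where
\[
P_k \coloneqq \tfrac{1}{2}(\partial_{x_k}p)\,\ln(\delta+\vert\nabla v\vert)\,(\delta+\vert\nabla v\vert)^{(p(x)-2)/2}\,\nabla v
\]
gathers the contribution from differentiating only the explicit $x$-dependence of $F$, while
\[
Q_k \coloneqq \sum_{j=1}^{d}(\partial_{\xi_j}F)(x,\nabla v)\,\partial_{x_k}\partial_{x_j}v
\]
gathers the contribution from differentiating $\nabla v$ with the exponent held fixed.

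In a second step I identify each sum separately. A direct calculation, combined with $\vert\nabla p\otimes \nabla v\vert^2=\vert\nabla p\vert^2\vert\nabla v\vert^2$, yields
\[
\sum_{k=1}^{d}\vert P_k\vert^2\;\sim\;\vert\ln(\delta+\vert\nabla v\vert)\vert^2\,(\delta+\vert\nabla v\vert)^{p(x)-2}\,\vert\nabla p\vert^2\,\vert\nabla v\vert^2=\mu(v).
\]
For the $Q_k$-part, the classical constant-exponent pointwise identity for the Jacobian of $\xi\mapsto(\delta+\vert\xi\vert)^{(p-2)/2}\xi$ (see, \textit{e.g.}, \cite[App.\ A]{BDS15} or \cite{DieRuz07}), applied with $p$ frozen at the value $p(x)$, gives
\[
\sum_{k=1}^{d}\vert Q_k\vert^2\;\sim\;(\delta+\vert\nabla v\vert)^{p(x)-2}\vert\nabla^2 v\vert^2,
\]
with hidden constants depending only on $p^-,p^+,\delta$, and in particular \emph{uniform} in $x$; this uniformity is exactly the content of Remark~\ref{rem:uniform}.

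In a third step I combine both pieces via the $\varepsilon$-Young inequality applied to $\vert P_k+Q_k\vert^2$: for every $\varepsilon\in(0,1)$,
\[
(1-\varepsilon)\vert Q_k\vert^2-c_\varepsilon\vert P_k\vert^2\;\leq\;\vert P_k+Q_k\vert^2\;\leq\;(1+\varepsilon)\vert Q_k\vert^2+c_\varepsilon\vert P_k\vert^2.
\]
Summing over $k\in\{1,\dots,d\}$ and substituting the two identifications from the previous step produces
\[
(\delta+\vert\nabla v\vert)^{p(x)-2}\vert\nabla^2 v\vert^2-C\mu(v)\;\lesssim\;\vert\nabla F(\cdot,\nabla v)\vert^2\;\lesssim\;(\delta+\vert\nabla v\vert)^{p(x)-2}\vert\nabla^2 v\vert^2+C\mu(v),
\]
and adding $(1+C)\mu(v)$ to every side delivers the claimed pointwise equivalence a.e.\ in $\Omega$.

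The main delicacy is ensuring that the constants in the constant-exponent identity for $\sum_k\vert Q_k\vert^2$ depend only on $p^-,p^+,\delta$ and not on the particular value $p(x)$; Remark~\ref{rem:uniform} is designed precisely for this. The hypothesis $\delta>0$ is used to keep the coefficients $(\delta+\vert\nabla v\vert)^{(p-4)/2}$ appearing in $\partial_{\xi_j}F$ finite where $\vert\nabla v\vert$ is small, and $\mu(v)\in L^1(\Omega)$ is exactly what makes the ``slack'' $\mu(v)$-term on each side compatible with the $L^1$-level regularity statement \eqref{natural_regularity}.
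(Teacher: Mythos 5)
Your proposal is correct and follows essentially the same route as the paper's proof: differentiate $F(\cdot,\nabla v)$ a.e.\ by the product and chain rules, observe that the $\nabla p$-contribution is exactly (a multiple of) $\mu(v)$, and identify the frozen-exponent part with $(\delta+\vert \nabla v\vert)^{p(\cdot)-2}\vert \nabla^2 v\vert^2$ — the paper invokes \cite[Lem.\ 2.8]{K22CR} for this last equivalence, while you obtain it from the uniform spectral bounds of $D_\xi F(x,\cdot)$, which is the same classical constant-exponent fact, made uniform in $x$ exactly as in Remark \ref{rem:uniform}. The concluding $\varepsilon$-splitting and the absorption of the slack into $\mu(v)$ reproduce the paper's (implicit) final step, so there is no gap.
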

		
		\begin{proof}
            Since the claimed equivalence reads $0\sim 0$ on $\{\vert\nabla v\vert =0\}$, we restrict to the case $\vert\nabla v\vert>0$. Here, by Rademacher's theorem, the product rule, and the chain rule, we find that
            \begin{align*}
                \nabla F(\cdot,\nabla v)&= (\delta+\vert \nabla v\vert)^{\frac{p-2}{2}}\big(\tfrac{\nabla p\otimes \nabla v}{2}\ln(\delta +\vert \nabla v\vert)+\tfrac{p-2}{2}\tfrac{\nabla\vert\nabla v\vert\otimes \nabla v}{\delta+\vert \nabla v\vert }+\nabla^2 v
                \big)\quad\text{ a.e.\ in }\{\vert\nabla v\vert>0\}\,.
            \end{align*}
            Since $\nabla p\in L^\infty(\Omega;\mathbb{R}^d)$ as well as $ (\delta+\vert \nabla v\vert)^{p-2}\vert\nabla^2 v\vert^2\sim (\delta+\vert \nabla v\vert)^{p-4}\vert\nabla\vert\nabla v\vert\otimes \nabla v\vert^2$ a.e.\ in $\Omega$ (\textit{cf.}\ \cite[Lem.\  2.8]{K22CR}),
            we conclude the claimed equivalence on $\{\vert\nabla v\vert>0\}$.
		\end{proof}

  The following lemma  
  translates the natural regularity assumption \eqref{natural_regularity} to the flux  $z\coloneqq \AAA(\cdot,\nabla u)\in W^{p'(\cdot)}_N(\textup{div};\Omega)$ and vice versa.  
		
		\begin{lemma}\label{lem:reg_equiv} Let $p\in C^{0,1}(\overline{\Omega})$ with $p^->1$ and $\delta\ge 0$. Then, for every $v\in W^{1,p(\cdot)}(\Omega)$ and $y\coloneqq \AAA(\cdot,\nabla v)\in L^{p'(\cdot)}(\Omega;\mathbb{R}^d)$, it holds  that $F(\cdot,\nabla v)\in W^{1,2}(\Omega;\mathbb{R}^d)$ if and only if $F^*(\cdot,y)\in W^{1,2}(\Omega;\mathbb{R}^d)$. In particular, it holds that $\vert F(\cdot,\nabla v)\vert^2 \sim \vert  F^*(\cdot,y)\vert^2$ a.e.\ in $\Omega$ and
        $\vert \nabla F(\cdot,\nabla v)\vert^2+(1+\vert \nabla v\vert^{p(\cdot)s}) \sim \vert \nabla F^*(\cdot,y)\vert^2+(1+\vert y\vert^{p'(\cdot)s})$ a.e.\ in $\Omega$ for some $s>1$ with can chosen to be~close~to~$1$.
		\end{lemma}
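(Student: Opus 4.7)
My plan is to reduce the claim to a direct chain-rule computation based on the explicit pointwise algebraic relation $F^*(\cdot,\AAA(\cdot,\nabla v))\sim F(\cdot,\nabla v)$, which I would establish first. Inserting the definitions \eqref{eq:def_F} and using $|y|=(\delta+|\nabla v|)^{p(\cdot)-2}|\nabla v|$ for $y=\AAA(\cdot,\nabla v)$, a case distinction between $|\nabla v|\le\delta$ and $|\nabla v|\ge\delta$ yields $\delta^{p(\cdot)-1}+|y|\sim(\delta+|\nabla v|)^{p(\cdot)-1}$ uniformly. Combined with the algebraic identity $(p-1)(p'-2)=2-p$, this gives
\begin{align*}
\bigl|F^*(\cdot,\AAA(\cdot,\nabla v))\bigr|\sim(\delta+|\nabla v|)^{(p(\cdot)-1)(p'(\cdot)-2)/2}(\delta+|\nabla v|)^{p(\cdot)-2}|\nabla v|=|F(\cdot,\nabla v)|\,,
\end{align*}
which is the first claim.

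For the gradient equivalence, I would differentiate the composition $G(x,a):=F^*(x,\AAA(x,a))$ via the chain rule, producing an \emph{elliptic part} of the form $(\partial_a G)(\cdot,\nabla v)\,\nabla^2 v$ and a \emph{variable-exponent part} $(\partial_x G)(\cdot,\nabla v)$ driven by $\nabla p$; analogously for $\nabla F(\cdot,\nabla v)$. A direct inspection, close in spirit to Lemma~\ref{lem:reg_primal_source} and relying again on $(p-1)(p'-2)=2-p$, shows that both elliptic parts $(\partial_a G)(\cdot,\nabla v)\,\nabla^2 v$ and $(\partial_a F)(\cdot,\nabla v)\,\nabla^2 v$ are pointwise comparable to $(\delta+|\nabla v|)^{(p(\cdot)-2)/2}|\nabla^2 v|$, while the variable-exponent parts carry logarithmic weights of the form $\vert\ln(\delta+|\nabla v|)\vert$ respectively $\vert\ln(\delta^{p(\cdot)-1}+|y|)\vert$, which by $|y|\sim(\delta+|\nabla v|)^{p-1}$ are mutually comparable.

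To absorb the logarithmic contributions into a power term, I would use the elementary bound $|\ln t|^2(\delta+t)^{p-2}t^2\lesssim_\varepsilon 1+t^{p(1+\varepsilon)}$, valid for any $\varepsilon>0$, which bounds the variable-exponent contribution by $1+|\nabla v|^{p(\cdot)s}$ with $s:=1+\varepsilon>1$; the identity $(p-1)p'=p$ then converts this into $1+|y|^{p'(\cdot)s}$ up to equivalence, producing the symmetric form stated in the lemma. The main technical obstacle is the careful bookkeeping in the chain rule — several powers of $(\delta+|\nabla v|)$ only cancel modulo constants that must be checked to depend solely on $p^-,p^+>1$ and $\delta\ge 0$ — together with the verification that $s>1$ can be chosen uniformly in $p\in C^{0,1}(\overline{\Omega})$, so that Remark~\ref{rem:ps} supplies the required integrability of the correction terms on both sides and yields the equivalence of $W^{1,2}$-regularity of $F(\cdot,\nabla v)$ and $F^*(\cdot,y)$.
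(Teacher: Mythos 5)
Your first step (the pointwise identity $\vert F^*(\cdot,\AAA(\cdot,\nabla v))\vert\sim\vert F(\cdot,\nabla v)\vert$) is fine and corresponds to what the paper dismisses as evident. The gap is in the second step: you differentiate the compositions $F(\cdot,\nabla v)$ and $F^*(\cdot,\AAA(\cdot,\nabla v))$ by the chain rule, which presupposes that $\nabla^2 v$ (equivalently $\nabla y$) exists at least a.e. This is neither assumed nor available: the hypothesis is only that \emph{one} of $F(\cdot,\nabla v)$, $F^*(\cdot,y)$ lies in $W^{1,2}(\Omega;\mathbb{R}^d)$, and the lemma is precisely the transfer of this Sobolev regularity to the other quantity, for all $\delta\ge 0$. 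For $\delta=0$ the map $a\mapsto F(x,a)$ is not bi-Lipschitz, so $F(\cdot,\nabla v)\in W^{1,2}$ does not give weak differentiability of $\nabla v$, and your ``elliptic part'' $(\delta+\vert\nabla v\vert)^{(p-2)/2}\vert\nabla^2 v\vert$ is not even defined; even for $\delta>0$ you would first have to justify that the composition is weakly differentiable before expanding its gradient, which is exactly what has to be proven. Note that the chain-rule computations you invoke are what the paper does in Lemmas \ref{lem:reg_primal_source} and \ref{lem:reg_dual_source}, and there they are stated only for $\delta>0$ and under the standing assumption that the relevant $W^{1,2}$ regularity already holds; they do not yield the ``if and only if''.

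The paper's proof avoids this by a difference-quotient argument: with $\tau_h f=\vert h\vert^{-1}(f(\cdot+h)-f)$ one estimates, a.e.\ on $\{x\in\Omega\mid \textup{dist}(x,\partial\Omega)>\vert h\vert\}$,
\begin{align*}
\vert \tau_h[F(\cdot,\nabla v)]\vert^2\lesssim \vert \tau_h[F^*(\cdot,y)]\vert^2+\vert\tau_h p\vert\,(1+\vert y\vert^{p'(\cdot)s})
\end{align*}
(and the reverse inequality), by splitting the increment into an $a$-shift, controlled through the equivalences \eqref{eq:hammera}--\eqref{eq:hammerh}, and an $x$-shift, controlled through the pointwise estimates of Lemma \ref{lem:Ax-Axh} (in particular \eqref{eq:Axh-Ax}) together with the Lipschitz continuity of $p$; passing to the limit $\vert h\vert\to 0$ then gives both the $W^{1,2}$ transfer and the stated a.e.\ comparison of the gradients with the correction terms $1+\vert\nabla v\vert^{p(\cdot)s}$ and $1+\vert y\vert^{p'(\cdot)s}$. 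If you want to salvage your plan, you must replace the chain-rule expansion by such a difference-quotient (or mollification) argument; your logarithmic absorption $\vert\ln t\vert^2(\delta+t)^{p-2}t^2\lesssim_\varepsilon 1+t^{p(1+\varepsilon)}$ and the conversion via $(p-1)p'=p$ can then be reused, but they do not by themselves close the differentiability gap.
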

		
		\begin{proof}
                The first equivalence is evident.  For the second equivalence, we denote by
          $\tau_h f \coloneqq \vert h\vert^{-1}(f (\cdot + h) - f)$ the difference quotient and exploit that, by Lemma \ref{lem:Ax-Axh}, for all $h\in \mathbb{R}^d$~small~enough
          \begin{align*}
             \vert \tau_h[F(\cdot,\nabla v)]\vert^2
             &\lesssim \vert F(\cdot\hspace{-0.15em}+\hspace{-0.15em}h,(\nabla v)(\cdot\hspace{-0.15em}+\hspace{-0.15em}h))\hspace{-0.15em}-\hspace{-0.15em}F(\cdot\hspace{-0.15em}+\hspace{-0.15em}h,\nabla v)\vert^2
             +\vert F(\cdot\hspace{-0.15em}+\hspace{-0.15em}h,\nabla v)\hspace{-0.15em}-\hspace{-0.15em}F(\cdot,\nabla v)\vert^2
              \\&\lesssim \vert F^*(\cdot\hspace{-0.15em}+\hspace{-0.15em}h,\AAA(\cdot\hspace{-0.15em}+\hspace{-0.15em}h,(\nabla v)(\cdot+h))\hspace{-0.15em}-\hspace{-0.15em}F^*(\cdot\hspace{-0.15em}+\hspace{-0.15em}h,\AAA(\cdot\hspace{-0.15em}+\hspace{-0.15em}h,\nabla v))\vert^2
           \hspace{-0.15em}+\hspace{-0.15em}\vert \tau_h p\vert (1\hspace{-0.15em}+\hspace{-0.15em}\vert \nabla v\vert^{p(\cdot)s})
             \\&\lesssim \vert \tau_h[F^*(\cdot,\AAA(\cdot,\nabla v))]\vert^2\hspace{-0.15em}+\hspace{-0.15em} \vert \tau_h p\vert (1\hspace{-0.05em}+\hspace{-0.05em}\vert\nabla v\vert^{p(\cdot)s})
            \\&\quad+\vert F^*(\cdot\hspace{-0.15em}+\hspace{-0.15em}h,\AAA(\cdot,\nabla v))\hspace{-0.15em}-\hspace{-0.15em}F^*(\cdot\hspace{-0.15em}+\hspace{-0.15em}h,\AAA(\cdot\hspace{-0.15em}+\hspace{-0.15em}h,\nabla v))\vert^2
             \\&\quad+\vert F^*(\cdot\hspace{-0.15em}+\hspace{-0.15em}h,\AAA(\cdot,\nabla v))\hspace{-0.15em}-\hspace{-0.15em}F^*(\cdot,\AAA(\cdot,\nabla v))\vert^2
            \\&\lesssim \vert \tau_h[F^*(\cdot,y)]\vert^2\hspace{-0.15em}+\hspace{-0.15em}\vert \tau_h p\vert (1+\vert y\vert^{p'(\cdot)s})\quad\textup{ a.e.\ in }\{x\in \Omega\mid \textup{dist}(x,\partial\Omega)>\vert h\vert\}\,.
          \end{align*} 
        Similarly, \hspace{-0.1mm}we \hspace{-0.1mm}find \hspace{-0.1mm}that \hspace{-0.1mm}$\vert \tau_h[F^*(\cdot,y)]\vert\hspace{-0.15em}\lesssim\hspace{-0.15em}\vert \tau_h[F(\cdot,\nabla v)]\vert+(1+\vert \nabla v\vert^{p(\cdot)s})$ \hspace{-0.1mm}a.e.\ \hspace{-0.1mm}in \hspace{-0.1mm}${\{x\hspace{-0.15em}\in\hspace{-0.15em} \Omega\mid \textup{dist}(x,\partial\Omega)\hspace{-0.15em}>\hspace{-0.15em}\vert h\vert\}}$. Passing to the limit $\vert h\vert \to 0$ proves the claim.
		\end{proof}

        Lemma \ref{lem:reg_equiv}, in turn, motivates to prove the following dual counterparts of Lemma \ref{lem:reg_primal_source}.
		
		\begin{lemma}\label{lem:reg_dual_source}
        Let $p\in C^{0,1}(\overline{\Omega})$ with $p^->1$ and $\delta> 0$. Then, for every  $y\in L^{p'(\cdot)}(\Omega;\mathbb{R}^d)$~with $F^*(\cdot,y)\hspace{-0.15em}\in \hspace{-0.15em} W^{1,2}(\Omega;\mathbb{R}^d)$, for  $\mu^*(y)\hspace{-0.15em}\coloneqq\hspace{-0.15em}\vert \ln(\delta^{p(\cdot)-1} +\vert y\vert)\vert^2(\delta^{p(\cdot)-1} +\vert y\vert)^{p'(\cdot)-2}\vert \nabla p'\otimes y\vert^2\hspace{-0.15em}\in\hspace{-0.15em} L^1(\Omega)$,~it~holds that
                \begin{align*}
                 (\delta^{p(\cdot)-1}+\vert y\vert )^{p'(\cdot)-2}\vert \nabla y\vert^2
                +\mu^*(y)\sim  \vert \nabla F^*(\cdot, y)\vert^2+\mu^*(y)\quad\text{ a.e.\ in }\Omega\,.
                \end{align*}
		\end{lemma}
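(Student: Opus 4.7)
The plan is to mirror the proof of Lemma \ref{lem:reg_primal_source} \emph{mutatis mutandis}: replace $F$ by $F^*$, $\nabla v$ by $y$, $p$ by $p'$, and the constant shift $\delta$ by the $x$-dependent shift $\delta^{p(x)-1}$. Since $p\in C^{0,1}(\overline\Omega)$ with $p^->1$ yields $p'=p/(p-1)\in C^{0,1}(\overline\Omega)$ with $|\nabla p|\sim|\nabla p'|$, Rademacher's theorem still supplies a.e.\ differentiability, and the product and chain rules apply to $F^*(\cdot, y)$ on $\{|y|>0\}$; both sides of the claimed equivalence vanish on $\{|y|=0\}$, since $\nabla y=0$ a.e.\ on that set and $\mu^*(y)=0$ there.

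On $\{|y|>0\}$, I would first derive the explicit expansion
\begin{align*}
\nabla F^*(\cdot,y) = (\delta^{p-1}+|y|)^{\frac{p'-2}{2}}\Big(&\tfrac{\nabla p'\otimes y}{2}\ln(\delta^{p-1}+|y|)\\
&+\tfrac{p'-2}{2}\tfrac{(\delta^{p-1}\ln(\delta)\nabla p+\nabla|y|)\otimes y}{\delta^{p-1}+|y|}+\nabla y\Big)\,,
\end{align*}
observing that the $x$-dependence of the shift produces the \emph{extra} summand $\delta^{p-1}\ln(\delta)\nabla p$ inside $\nabla(\delta^{p(\cdot)-1}+|y|)$, which has no analogue in the primal proof. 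After squaring and weighting by $(\delta^{p-1}+|y|)^{p'-2}$, the first bracket term reproduces $\mu^*(y)$ exactly; the $\nabla|y|\otimes y$ piece combined with the $\nabla y$ term yields the principal quantity $(\delta^{p-1}+|y|)^{p'-2}|\nabla y|^2$ via the pointwise equivalence
\[
(\delta^{p-1}+|y|)^{p'-2}|\nabla y|^2\sim(\delta^{p-1}+|y|)^{p'-4}|\nabla|y|\otimes y|^2\,,
\]
proved exactly as in \cite[Lem.\ 2.8]{K22CR} (the $\lesssim$ direction being immediate from $|\nabla|y||\le|\nabla y|$).

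The hard part will be absorbing the extra contribution $(\delta^{p-1}|\ln\delta|)^2(\delta^{p-1}+|y|)^{p'-4}|\nabla p|^2|y|^2$ into $\mu^*(y)$ and the principal term. My plan is a pointwise case split according to whether $|y|\le\delta^{p(\cdot)-1}$ or $|y|>\delta^{p(\cdot)-1}$. In the former regime the shift dominates, so $\delta^{p-1}+|y|\sim\delta^{p-1}$, and the identity $(p-1)(p'-2)=2-p$ together with $|\nabla p|\sim|\nabla p'|$ makes the extra term directly comparable to $\mu^*(y)$ with a constant depending only on $p^\pm$. In the latter regime $\delta^{p-1}+|y|\sim|y|$, and since $\delta>0$ is fixed, $\delta^{p(\cdot)-1}|\ln\delta|$ is bounded by a constant $C_\delta$ depending only on $\delta$ and $p^\pm$; the extra contribution is then bounded by $C_\delta\,(\delta^{p-1}+|y|)^{p'-2}|\nabla p'|^2|y|^2$, which is absorbed into $\mu^*(y)$ on the subregion $|\ln(\delta^{p-1}+|y|)|\gtrsim 1$ and into $(\delta^{p-1}+|y|)^{p'-2}|\nabla y|^2$ on its complement by a short further comparison. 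With these absorptions, both directions of the claimed $\sim$ follow from the explicit expansion above.
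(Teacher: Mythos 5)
Your overall route is the same as the paper's: reduce to $\{\vert y\vert>0\}$ (the equivalence reading $0\sim 0$ on $\{\vert y\vert=0\}$), compute $\nabla F^*(\cdot,y)$ by Rademacher's theorem together with the product and chain rules --- your expansion, including the extra summand $\delta^{p-1}\ln(\delta)\,\nabla p$ produced by the $x$-dependent shift, is exactly the one displayed in the paper --- and then use $\nabla p'\in L^\infty(\Omega;\mathbb{R}^d)$ and the radial equivalence $(\delta^{p-1}+\vert y\vert)^{p'-2}\vert\nabla y\vert^2\sim(\delta^{p-1}+\vert y\vert)^{p'-4}\vert\nabla\vert y\vert\otimes y\vert^2$; for the latter the pertinent reference is the dual statement \cite[Lem.\ 2.11]{K22CR}, not Lem.\ 2.8. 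Where you go beyond the paper (which is silent on this point) is the explicit absorption of the extra contribution $E:=(\delta^{p-1}\ln\delta)^2(\delta^{p-1}+\vert y\vert)^{p'-4}\vert\nabla p\vert^2\vert y\vert^2$, and this is where your argument does not close.

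The absorption fails pointwise in the regime $\delta^{p(\cdot)-1}+\vert y\vert\approx 1$, which occurs whenever $\delta\neq1$: there $\vert\ln(\delta^{p-1}+\vert y\vert)\vert$ degenerates while $\delta^{p-1}\vert\ln\delta\vert\,\vert\nabla p\vert\,\vert y\vert$ does not, so $E\lesssim\mu^*(y)$ is false, and the principal term cannot compensate because $\nabla y$ may vanish where $\vert y\vert\sim1$ and $\nabla p\neq0$: take $y$ locally constant with $\delta^{p(\cdot)-1}+\vert y\vert$ near, but not equal to, $1$ (admissible, since then $F^*(\cdot,y)$ is Lipschitz); on a set of positive measure $E$ is of order one while $(\delta^{p-1}+\vert y\vert)^{p'-2}\vert\nabla y\vert^2+\mu^*(y)$ is arbitrarily small, so no constant depending only on $p^\pm$, $\delta$, and $\Vert\nabla p\Vert_{\infty,\Omega}$ works. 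Concretely, your Case 1 claim ("directly comparable to $\mu^*$") already breaks when $\delta<1$ and $2\delta^{p-1}\ge1$, since then $\vert\ln(\delta^{p-1}+\vert y\vert)\vert\not\gtrsim\vert\ln\delta\vert$, and the "short further comparison" on the complement subregion of Case 2 is precisely the inequality that does not exist. A clean repair is to enlarge $\mu^*$ --- e.g.\ replace $\vert\ln(\delta^{p(\cdot)-1}+\vert y\vert)\vert$ by $\vert\ln\delta\vert+\vert\ln(\delta^{p(\cdot)-1}+\vert y\vert)\vert$, or add $(\delta^{p(\cdot)-1}\vert\ln\delta\vert)^2(\delta^{p(\cdot)-1}+\vert y\vert)^{p'(\cdot)-4}\vert\nabla p\otimes y\vert^2$ to it --- which is harmless for the subsequent use of the lemma, where $\mu^*$ is anyway dominated by a multiple of $1+\vert y\vert^{p'(\cdot)s}$; with $\mu^*$ as defined, however, your pointwise case split cannot establish the stated equivalence.
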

		
		\begin{proof}
        Since the claimed equivalence reads $0\sim 0$ on $\{\vert y\vert =0\}$, we restrict to the case $\vert y\vert>0$. Here, by Rademacher's theorem, the product rule, and the chain rule, we find that
            \begin{align*}
                \nabla F^*(\cdot,y)&= (\delta^{p-1}\!\!+\!\vert y\vert)^{\frac{p'-2}{2}}\big(\tfrac{\nabla p'\otimes y}{2}\ln(\delta^{p-1}\! \!+\!\vert y\vert)\!+\!\tfrac{p'-2}{2}\tfrac{(\nabla p \ln(\delta)\delta^{p-1}+\nabla\vert y\vert)\otimes y}{\delta^{p-1}+\vert y\vert }\!+\!
                \nabla y\big)\text{ a.e.\ in }\{\vert y\vert \!>\!0\}\,.
            \end{align*}
        Since $\nabla p'\in L^\infty(\Omega;\mathbb{R}^d)$ as well as $ (\delta^{p-1}+\vert y\vert)^{p'-2}\vert\nabla y\vert^2\sim (\delta^{p-1}+\vert y\vert)^{p'-4}\vert  \nabla\vert y\vert\otimes y\vert^2$ a.e.\ in $\Omega$ (\textit{cf.}\ \cite[Lem.\  2.11]{K22CR}), 
            we conclude the claimed equivalence on $\{\vert y\vert >0\}$.
		\end{proof}\newpage
	\section{\textit{Medius} error analysis}\label{sec:medius}

 	\qquad In this section, we prove a best-approximation result for the $\mathcal{S}^{1,cr}_D(\mathcal{T}_h)$-approximation \eqref{eq:pDirichletS1crD} of \eqref{eq:pDirichletW1p}.\enlargethispage{5mm}
  
 \begin{theorem}\label{thm:best-approx} 
 Let $p\in C^0(\overline{\Omega})$ with $p^->1$ and $\delta\ge 0$ and let $f\in L^{p'(\cdot)}(\Omega)\cap \bigcap_{h\in (0,h_0]}{L^{p_h'(\cdot)}(\Omega)}$ for some $h_0\hspace{-0.15em}>\hspace{-0.15em}0$. \hspace{-0.2em}Then, there exists some $s\hspace{-0.15em}>\hspace{-0.15em}1$, which can chosen to be close~to~$1$~if~${h\hspace{-0.15em}>\hspace{-0.15em}0}$~is~close~to~$0$, such that if $u\in W^{1,p(\cdot)s}_D(\Omega)$, then for every $h\in (0,h_0]$, it holds that
		\begin{align*}
    \begin{aligned}
		\|F_h(\cdot,\nabla_{\!h} u_h^{\textit{cr}})-F_h(\cdot,\nabla u)\|_{2,\Omega}^2&\lesssim \inf_{v_h\in\mathcal{S}^1_D(\mathcal{T}_h)}{\big[\|F_h(\cdot,\nabla v_h)-F_h(\cdot,\nabla u)\|_{2,\Omega}^2+\mathrm{osc}_h^2(f,v_h)}\\&\;\;+\|\omega_p(h_{\mathcal{T}})^2\,(1+\vert \nabla u\vert^{p(\cdot)s}+(\vert \nabla v_h\vert+\vert \nabla v_h-\nabla_{\!h} u_h^{\textit{cr}}\vert)^{p_h(\cdot)s})\|_{1,\Omega}\big]\,,
 \end{aligned}  
		\end{align*}
		where the hidden constant in $\lesssim$ also depends on $s\hspace{-0.17em}>\hspace{-0.17em}1$ and the chunkiness $\omega_0\hspace{-0.17em}>\hspace{-0.17em}0$, and~for~${v_h\hspace{-0.17em}\in\hspace{-0.17em} \smash{\mathcal{S}^1_D(\mathcal{T}_h)}}$ and $\mathcal{M}_h\subseteq \mathcal{T}_h $, we define $\mathrm{osc}_h^2(f,v_h,\mathcal{M}_h)\coloneqq \sum_{T\in \mathcal{M}_h}{\mathrm{osc}_h^2(f,v_h,T)}$,  
		where we define $\mathrm{osc}_h^2(f,v_h,T)$ $\coloneqq \rho_{((\varphi_h)_{\vert \nabla v_h\vert})^*,T}(h_T (f-f_h))$
 for all $T\in \mathcal{T}_h$ 
 and $\mathrm{osc}_h^2(f,v_h)\coloneqq\mathrm{osc}_h^2(f,v_h,\mathcal{T}_h)$.
	\end{theorem}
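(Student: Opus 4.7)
The plan is to generalize the medius-analysis strategy of \cite{Bre15,K22CR} to the variable-exponent setting using the shifted generalized $N$-function calculus set up in Section~\ref{sec:basic}. As a first step, I would use the equivalence \eqref{eq:hammera} (applied with $p$ replaced by $p_h$, which is legitimate by Remark~\ref{rem:uniform}) to pass to
\begin{align*}
\|F_h(\cdot,\nabla_{\!h}u_h^{\textit{cr}})-F_h(\cdot,\nabla u)\|_{2,\Omega}^2 \sim (\AAA_h(\cdot,\nabla_{\!h}u_h^{\textit{cr}})-\AAA_h(\cdot,\nabla u),\nabla_{\!h}u_h^{\textit{cr}}-\nabla u)_\Omega,
\end{align*}
and then, after inserting an arbitrary $v_h\in\mathcal{S}^1_D(\mathcal{T}_h)$, split along $\nabla_{\!h}u_h^{\textit{cr}}-\nabla u=(\nabla_{\!h}u_h^{\textit{cr}}-\nabla v_h)+(\nabla v_h-\nabla u)$. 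The contribution of the second summand is handled by the $\varepsilon$-Young inequality \eqref{ineq:young} in the conjugate pair $((\varphi_h)_{|\nabla u|},((\varphi_h)_{|\nabla u|})^*)$ combined with the shift-change Lemma~\ref{lem:shift-change}, producing $\|F_h(\cdot,\nabla v_h)-F_h(\cdot,\nabla u)\|_{2,\Omega}^2$ plus an $\varepsilon$-fraction of the left-hand side to be absorbed at the end.

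The bulk of the work concerns the first summand. Setting $w_h^{\textit{cr}}\coloneqq u_h^{\textit{cr}}-v_h\in\mathcal{S}^{1,\textit{cr}}_D(\mathcal{T}_h)$ and letting $E_h^{\textit{cr}}\colon\mathcal{S}^{1,\textit{cr}}_D(\mathcal{T}_h)\to\mathcal{S}^1_D(\mathcal{T}_h)$ denote the node-averaging quasi-interpolation operator, I would test the discrete CR problem \eqref{eq:pDirichletS1crD} with $w_h^{\textit{cr}}$ and the continuous problem \eqref{eq:pDirichletW1p} with the conforming enrichment $E_h^{\textit{cr}}(w_h^{\textit{cr}})\in W^{1,p(\cdot)}_D(\Omega)$. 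Using $(f,\Pi_h w_h^{\textit{cr}})_\Omega=(f_h,\Pi_h w_h^{\textit{cr}})_\Omega$ and writing $\eta_h^{\textit{cr}}\coloneqq w_h^{\textit{cr}}-E_h^{\textit{cr}}(w_h^{\textit{cr}})$, these two identities combine to
\begin{align*}
&(\AAA_h(\cdot,\nabla_{\!h}u_h^{\textit{cr}})-\AAA(\cdot,\nabla u),\nabla_{\!h}w_h^{\textit{cr}})_\Omega\\
&\qquad=(f,\Pi_h w_h^{\textit{cr}}-E_h^{\textit{cr}}(w_h^{\textit{cr}}))_\Omega-(\AAA(\cdot,\nabla u),\nabla_{\!h}\eta_h^{\textit{cr}})_\Omega.
\end{align*}
On the left I would replace $\AAA(\cdot,\nabla u)$ by $\AAA_h(\cdot,\nabla u)$ at the cost of an \emph{exponent-quadrature} term that, by a pointwise H\"older-type continuity of $\AAA$ in $x$, is dominated by $\|\omega_p(h_{\mathcal{T}})^2(1+|\nabla u|^{p(\cdot)s})\|_{1,\Omega}$ for some $s>1$ close to $1$. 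The right-hand side term involving $f$ is rewritten as $(f-f_h,\Pi_h w_h^{\textit{cr}}-E_h^{\textit{cr}}(w_h^{\textit{cr}}))_\Omega$ (exploiting that $f_h$ sees only element-wise averages) and then estimated against $\mathrm{osc}_h^2(f,v_h)$ via $\varepsilon$-Young in the conjugate pair $((\varphi_h)_{|\nabla v_h|},((\varphi_h)_{|\nabla v_h|})^*)$ together with a local $L^{p_h(\cdot)}$-stability bound for $\eta_h^{\textit{cr}}$ in terms of the jumps of $w_h^{\textit{cr}}$.

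The technical heart of the argument, and where I expect the main obstacle, is the control of the remaining term $(\AAA(\cdot,\nabla u),\nabla_{\!h}\eta_h^{\textit{cr}})_\Omega$. By inserting $\AAA_h(\cdot,\nabla v_h)$, which again contributes a best-approximation plus $\omega_p(h_{\mathcal{T}})^2$-term, the residual reduces to $(\AAA_h(\cdot,\nabla v_h),\nabla_{\!h}\eta_h^{\textit{cr}})_\Omega$, and this must be absorbed into the right-hand side of the claimed estimate. The required ingredient is a stability bound for the enrichment in the shifted generalized $N$-function $(\varphi_h)_{|\nabla v_h|}$, schematically
\begin{align*}
\sum_{T\in\mathcal{T}_h}\rho_{(\varphi_h)_{|\nabla v_h|},T}(\nabla_{\!h}\eta_h^{\textit{cr}})\lesssim \sum_{S\in\mathcal{S}_h^{\hspace{0.05em}i}}\rho_{(\varphi_h)_{|\nabla v_h|},\omega_S}\bigl(h_S^{-1}\jump{w_h^{\textit{cr}}}_S\bigr),
\end{align*}
together with a local efficiency estimate bounding the right-hand side by $\|F_h(\cdot,\nabla_{\!h}u_h^{\textit{cr}})-F_h(\cdot,\nabla v_h)\|_{2,\Omega}^2+\mathrm{osc}_h^2(f,v_h)$. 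These are the variable-exponent analogues of the shifted Poincar\'e and efficiency estimates of \cite{Bre15,K22CR}, and proving them requires combining the equivalences of Proposition~\ref{lem:hammer}, the shift-change Lemma~\ref{lem:shift-change}, and the element-wise constancy of $p_h$ (whose patch-wise jumps are absorbed into $\omega_p(h_{\mathcal{T}})$). Once these local estimates are in place, collecting every contribution and absorbing the $\varepsilon$-multiples of $\|F_h(\cdot,\nabla_{\!h}u_h^{\textit{cr}})-F_h(\cdot,\nabla u)\|_{2,\Omega}^2$ yields the claimed medius bound.
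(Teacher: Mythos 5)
Your overall skeleton is in fact the same as the paper's: the identity you obtain by testing \eqref{eq:pDirichletS1crD} with $w_h^{\textit{cr}}$ and \eqref{eq:pDirichletW1p} with the node-averaged enrichment is an algebraic rearrangement of the splitting \eqref{thm:best-approx.1}, and your treatment of the best-approximation and quadrature ($\omega_p$) contributions by $\varepsilon$-Young plus shift change matches the paper. The genuine gap sits at the central consistency term $(\AAA_h(\cdot,\nabla v_h),\nabla_{\!h}\eta_h^{\textit{cr}})_\Omega$. You propose to absorb it using only a shifted-modular stability bound for $\nabla_{\!h}\eta_h^{\textit{cr}}$ in terms of the jumps of $w_h^{\textit{cr}}$; but any $\varepsilon$-Young pairing of $\AAA_h(\cdot,\nabla v_h)$ against $\nabla_{\!h}\eta_h^{\textit{cr}}$ in the pair $((\varphi_h)_{\vert\nabla v_h\vert},((\varphi_h)_{\vert\nabla v_h\vert})^*)$ leaves the conjugate term $c_\varepsilon\,\rho_{((\varphi_h)_{\vert\nabla v_h\vert})^*,\Omega}(\vert\AAA_h(\cdot,\nabla v_h)\vert)$, which by \eqref{eq:hammerg} is comparable to $\rho_{\varphi_h,\Omega}(\nabla v_h)$ --- an $O(1)$ quantity, not an error quantity --- so it can neither be absorbed nor dominated by the right-hand side of the theorem. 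The smallness of this term is not pointwise but comes from cancellation: in the paper one integrates by parts element-wise, exploits that $\{\AAA_h(\cdot,\nabla v_h)\cdot n\}_S$ is constant on each side and that $\int_S{\jump{e_h-\Pi_h^{\textit{av}}e_h}_S\,\mathrm{d}s}=0$, so that only the normal-flux jumps $\jump{\AAA_h(\cdot,\nabla v_h)\cdot n}_S$ survive (\textit{cf.}\ \eqref{thm:best-approx.2}); these are then bounded by the bubble-function efficiency estimate \eqref{lem:efficiency.2}, which crucially uses the PDE \eqref{eq:pDirichletW1p} and yields the best-approximation error plus $\omega_p$-terms plus oscillation. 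Your ``local efficiency estimate'' concerns the jumps of the function values $w_h^{\textit{cr}}$ (that is a nonconformity/inverse estimate following from the zero-mean property of CR jumps and \eqref{eq:hammera}); it supplies only the $\varepsilon$-absorbable half of the Young inequality and cannot replace the flux-jump efficiency of Lemma \ref{lem:efficiency}. Without this mechanism the estimate does not close.

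Two further points. First, your rewriting of $(f,\Pi_h w_h^{\textit{cr}}-\Pi_h^{\textit{av}}w_h^{\textit{cr}})_\Omega$ as $(f-f_h,\Pi_h w_h^{\textit{cr}}-\Pi_h^{\textit{av}}w_h^{\textit{cr}})_\Omega$ silently drops $(f_h,w_h^{\textit{cr}}-\Pi_h^{\textit{av}}w_h^{\textit{cr}})_\Omega$, which does not vanish and is not an oscillation term; in the paper this piece is precisely what the volume efficiency estimate \eqref{lem:efficiency.1} (again bubble functions and the PDE) is needed for, and you have no analogue of it. Second, the stability of $\Pi_h^{\textit{av}}$ in the shifted modular is only element-to-patch with the shift frozen on $T$ (\textit{cf.}\ \eqref{eq:a6}); returning from patch-shifts to element-shifts requires the patch-shift-to-element-shift inequality (Lemma \ref{lem:patch_to_element}), which again produces flux-jump terms that must be fed into \eqref{lem:efficiency.2}. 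These are not cosmetic omissions: the integration by parts, the zero-mean of the CR jumps, and the bubble-function efficiency estimates are exactly where the \textit{medius} character of the result enters.
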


     The proof of Theorem \ref{thm:best-approx}
     involves three tools.\vspace{-1mm} 
		 
		 \subsection{Node-averaging quasi-interpolation operator}\label{subsec:node-averaging}

        \hspace{5mm}The \hspace{-0.1mm}first \hspace{-0.1mm}tool \hspace{-0.1mm}is \hspace{-0.1mm}the \hspace{-0.1mm}\textit{node-averaging \hspace{-0.1mm}quasi-interpolation \hspace{-0.1mm}operator} $\Pi_h^{\textit{av}}\colon \mathcal{L}^1(\mathcal{T}_h)\to \mathcal{S}^1_D(\mathcal{T}_h)$,~that, 
		  denoting~for~every~${\nu\in \mathcal{N}_h}$, 
        by ${\mathcal{T}_h(\nu)\coloneqq \{T\in \mathcal{T}_h\mid \nu \in T\}}$, the \textit{set of elements sharing~$\nu$}, for every ${v_h\in \mathcal{L}^1(\mathcal{T}_h)}$,~is~defined~by
	\begin{align*}
		\Pi_h^{\textit{av}}v_h\coloneqq \sum_{\nu\in \smash{\mathcal{N}_h}}{\langle v_h\rangle_\nu \varphi_\nu}\,,\qquad \langle v_h\rangle_\nu\coloneqq \begin{cases}
			\frac{1}{\textup{card}(\mathcal{T}_h(\nu ))}\sum_{T\in \mathcal{T}_h(\nu)}{(v_h|_T)(\nu)}&\;\text{ if }\nu\in \Omega\cup \Gamma_N\,,\\
			0&\;\text{ if }\nu\in \Gamma_D\,,
		\end{cases}
	\end{align*}
	where we denote by $(\varphi_\nu )_{\smash{\nu\in \mathcal{N}_h}}$, the nodal basis of $\mathcal{S}^1(\mathcal{T}_h)$.
	If $p\in C^0(\overline{\Omega})$ with $p^->1$ and $\delta\ge 0$, then there exists some $s>1$, which can chosen to be close to $1$ if $h_T>0$ is close to $1$,  such that 
 for every $a\ge 0$, $v_h\in \smash{\smash{\mathcal{S}^{1,\textit{cr}}_D(\mathcal{T}_h)}}$, $T\in \mathcal{T}_h$, and $m\in \{0,1\}$, it holds that (\textit{cf.}\ \cite[Cor.\ A.2]{BK22B})
	\begin{align}\label{eq:a6} 
\begin{aligned}&\int_T{\varphi_a(\xi_T,h_T^m\vert\nabla^m_{\!h}(v_h-\Pi_h^{\textit{av}}v_h)\vert)\,\mathrm{d}x}
	\lesssim \int_{\omega_T}{\varphi_a(\xi_T,h_T\vert\nabla_{\!h}v_h\vert)\,\mathrm{d}x}\\
  &\quad\lesssim \int_{\omega_T}{(\varphi_h)_a(\cdot,h_T\vert\nabla_{\!h}v_h\vert)\,\mathrm{d}x}+h_T^{\min\{2,p^-\}}\,\omega_{p,\omega_T}(h_T)\,\|1+a^{p_h(\cdot)s}+\vert\nabla_{\!h}v_h\vert^{p_h(\cdot)s}\|_{1,\omega_T} \,,
  \end{aligned}
	\end{align}
    where we use that $\varphi_a(\xi_T,h_T\,t)\lesssim \varphi_a(\xi_{T'},h_T\,t) +\smash{h_T^{\min\{2,p^-\}}}\,\vert p(\xi_T)-p(\xi_{T'})\vert\,(1+a^{p(\xi_{T'})s}+t^{p(\xi_{T'})s})$ for all $T'\in \mathcal{T}_h$ with $T'\subseteq \omega_T$ for 
    the second inequality, which follows analogously~to~\eqref{eq:phixh-phix}.
	
	\subsection{Local efficiency estimates}

    \qquad The second tool are local efficiency estimates that are  based on standard bubble function techniques.\enlargethispage{2mm} 
		
		\begin{lemma}\label{lem:efficiency}
		Let $p\in C^0(\overline{\Omega})$ with $p^->1$ and $\delta\ge 0$ and let $f\in L^{p'(\cdot)}(\Omega)\cap \bigcap_{h\in (0,h_0]}{L^{p_h'(\cdot)}(\Omega)}$ for some $h_0>0$. Then, there exists some $s>1$, which can chosen to be close to $1$ if $h_T>0$ is close to $0$, such that if $u\in W^{1,p(\cdot)s}_D(\Omega)$, then for~every~${h\in (0,h_0]}$,~${v_h\in \mathcal{S}^1_D(\mathcal{T}_h)}$, $T\in \mathcal{T}_h$,~and~$S\in \mathcal{S}_h^{i}$, it holds that
		 \hspace{-3mm}\begin{align}
		 \rho_{((\varphi_h)_{\vert \nabla v_h\vert})^*,T}(h_Tf_h)&\lesssim \|F_h(\cdot,\nabla v_h)-F_h(\cdot,\nabla u)\|_{2,T}^2,\label{lem:efficiency.1}\\&\quad+\|\omega_{p,T}(h_T)^2\,(1+\vert \nabla u\vert^{p(\cdot)s})\|_{1,T}+\mathrm{osc}_h^2(f,v_h,T)\, \notag\\
		 h_S\|\jump{F_h(\cdot,\nabla v_h)}_S\|_{2,S}^2&\lesssim \|F_h(\cdot,\nabla v_h)-F_h(\cdot,\nabla u)\|_{2,\omega_S}^2\label{lem:efficiency.2}\\&\quad+\|\omega_{p,\omega_S}(h_S)^2\,(1+\vert \nabla u\vert^{p(\cdot)s}+\vert \nabla v_h\vert^{p_h(\cdot)s})\|_{1,\omega_S}+\mathrm{osc}_h^2(f,v_h,\omega_S) \notag\,,\hspace{-1mm}
		 \end{align}
        where the hidden constants in \eqref{lem:efficiency.1} and \eqref{lem:efficiency.2} also depend on $s>1$ and the chunkiness $\omega_0>0$, and for every $\mathcal{M}_h\subseteq\mathcal{T}_h$, we define $\omega_{p,\cup\mathcal{M}_h}(t)|_T\coloneqq \omega_{p,T}(t)$~for~all~$t\ge 0$ and $T\in \mathcal{T}_h$~and~${\omega_p\coloneqq \omega_{p,\mathcal{T}_h}}$.
		\end{lemma}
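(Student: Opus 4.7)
The plan is to follow the classical bubble function technique of Verfürth, adapted to the shifted $N$-function framework and carefully tracking the perturbation between $\AAA$ and its piecewise-constant-in-$x$ discretization $\AAA_h$, which will produce the $\omega_p$-terms. I would prove both estimates in parallel, since both rest on the same three ingredients: a suitable bubble, a Fenchel--Young choice of test function and the shift-change inequalities of Lemma \ref{lem:shift-change}.

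For the volume residual, let $b_T\in \mathcal{P}_{d+1}(T)\cap H^1_0(T)$ denote the standard interior bubble with $\|b_T\|_\infty\leq 1$ and $\langle b_T\rangle_T\sim 1$, and write $\psi_T(\cdot)\coloneqq (\varphi_h|_T)_{\vert\nabla v_h|_T\vert}(\cdot)$; this is a genuine (classical) $N$-function on $T$ because both $\xi_T$ and $\nabla v_h|_T$ are constant on $T$. Taking the test function $\eta_T\coloneqq b_T\,h_T\,(\psi_T')^{-1}(h_T|f_h|_T)\,\mathrm{sign}(f_h|_T)$ supported in $T$, the Fenchel--Young equality together with the $\Delta_2\cap\nabla_2$ property of $\psi_T$ (Remark \ref{rem:phi_a}) yields $(f_h,\eta_T)_T\sim \rho_{((\varphi_h)_{\vert\nabla v_h\vert})^*,T}(h_T f_h)$, which is exactly the quantity to be bounded. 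Now test the continuous equation \eqref{eq:pDirichletW1p} with $\eta_T$: since $\AAA_h(\cdot,\nabla v_h)|_T$ is a constant vector, an integration by parts annihilates its contribution and gives
\begin{align*}
(f_h,\eta_T)_T=(\AAA(\cdot,\nabla u)-\AAA_h(\cdot,\nabla v_h),\nabla \eta_T)_T+(f_h-f,\eta_T)_T.
\end{align*}
For the first term on the right, I would split $\AAA-\AAA_h=[\AAA(\cdot,\nabla u)-\AAA_h(\cdot,\nabla u)]+[\AAA_h(\cdot,\nabla u)-\AAA_h(\cdot,\nabla v_h)]$: the first bracket is a pure $x$-perturbation, bounded by $\omega_{p,T}(h_T)(1+\vert\nabla u\vert^{p(\cdot)s})$ via a logarithmic-type estimate on $\AAA$; the second bracket is controlled by $\vert F_h(\cdot,\nabla u)-F_h(\cdot,\nabla v_h)\vert$ through Proposition \ref{lem:hammer} and an $\varepsilon$-Young inequality \eqref{ineq:young} applied with the shifted pair $(\psi_T,\psi_T^*)$, using the polynomial inverse estimate $\|\nabla \eta_T\|_{\infty,T}\lesssim h_T^{-1}\|\eta_T\|_{\infty,T}$. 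The oscillation term $(f_h-f,\eta_T)_T$ is estimated identically, producing $\mathrm{osc}_h^2(f,v_h,T)$. Absorbing all $\varepsilon$-contributions into the left-hand side yields \eqref{lem:efficiency.1}.

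For the jump residual I would use the same template on an interior side $S=\partial T_+\cap \partial T_-$. Let $b_S\in W^{1,\infty}_0(\omega_S)$ be the canonical side bubble with $\|b_S\|_\infty\leq 1$ and $\int_S b_S\,\mathrm{d}s\sim |S|$, and observe that both $F_h(\cdot,\nabla v_h)|_{T_\pm}$ and hence $J_S\coloneqq \jump{F_h(\cdot,\nabla v_h)}_S\in\mathbb{R}^d$ are constant on $S$. Via Proposition \ref{lem:hammer} and Lemma \ref{lem:shift-change} the jump $\jump{\AAA_h(\cdot,\nabla v_h)\cdot n}_S$ is, up to a shift change, equivalent after pairing with a suitable constant direction to $h_S|J_S|^2$; this motivates the choice $\eta_S\coloneqq b_S\,h_S\,\zeta_S$ with a constant vector $\zeta_S\in\mathbb{R}^d$ tailored by Fenchel--Young so that $(\jump{\AAA_h\cdot n}_S,\zeta_S)_S\sim h_S\|J_S\|_{2,S}^2$. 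Testing \eqref{eq:pDirichletW1p} with $\eta_S$ and performing element-wise integration by parts on $T_+\cup T_-$ produces the target side contribution together with volume terms of exactly the same structure as in the first step, but now summed over $\omega_S$; the identical decomposition of $\AAA-\AAA_h$ yields $\omega_{p,\omega_S}(h_S)(1+\vert\nabla u\vert^{p(\cdot)s}+\vert\nabla v_h\vert^{p_h(\cdot)s})$ alongside $\|F_h(\cdot,\nabla v_h)-F_h(\cdot,\nabla u)\|_{2,\omega_S}^2$ and $\mathrm{osc}_h^2(f,v_h,\omega_S)$. Boundary sides are treated by the same argument on the single-element patch $\omega_S=T$.

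The main obstacle I expect is the bookkeeping for the variable exponent. One must carefully choose the shift to be $|\nabla v_h|$ (not $|\nabla u|$) in $\psi_T$ to match the target modular on the left, then rely on Lemma \ref{lem:shift-change} to swap shifts in the natural-distance contribution only after the $x$-perturbation $\AAA(\cdot,\nabla u)-\AAA_h(\cdot,\nabla u)$ has already been handled; otherwise one picks up $\vert\nabla v_h\vert^{p_h(\cdot)s}$ terms in the volume estimate \eqref{lem:efficiency.1} that would couple back into the left-hand side and prevent absorption. A secondary, purely technical difficulty is to verify that the $L^{p_h(\cdot)s}$-type weighted inequalities needed to extract the $\omega_p$ terms and apply the polynomial equivalence $\|b_T^{1/2}\cdot\|_{2,T}\sim \|\cdot\|_{2,T}$ on the bubble-weighted polynomial norms hold uniformly in $h>0$; this is precisely the point at which $s>1$ must be chosen close to $1$ for $h$ small, exactly as in Theorem \ref{P1_best-approx} and \eqref{eq:a6}.
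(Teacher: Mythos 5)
Your treatment of the volume estimate \eqref{lem:efficiency.1} is essentially the paper's proof: interior bubble, test function built from $\partial_a((\varphi_{\vert\nabla v_h\vert})^*)(\xi_T,h_T\vert f_h\vert)$ via the Fenchel--Young identity, splitting $\AAA(\cdot,\nabla u)-\AAA_h(\cdot,\nabla v_h)$ into the $x$-perturbation $\AAA(\cdot,\nabla u)-\AAA_h(\cdot,\nabla u)$ (handled by the discrete-to-continuous estimates, giving the $\omega_{p,T}$-term) and the natural-distance part, followed by $\varepsilon$-Young with shift changes and absorption. That part is sound.

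The jump estimate \eqref{lem:efficiency.2}, however, has a genuine gap at its central step. Since the test functions in \eqref{eq:pDirichletW1p} are scalar, element-wise integration by parts against $\lambda b_S$ only gives you control of the \emph{scalar} normal residual $\jump{\AAA_h(\cdot,\nabla v_h)\cdot n}_S$; there is no freedom to ``pair with a suitable constant direction'' (and your $\zeta_S\in\mathbb{R}^d$ is vector-valued where only a scalar multiplier is admissible). Passing from this normal residual to the \emph{full} vector jump $h_S\|\jump{F_h(\cdot,\nabla v_h)}_S\|_{2,S}^2$ is exactly the point that needs an argument, and Proposition \ref{lem:hammer} plus Lemma \ref{lem:shift-change} alone do not supply it: for $p=2$, $\AAA_h(\cdot,a)\approx a$, a gradient jump that is purely tangential to $S$ has vanishing normal flux jump but nonzero $\jump{F_h}_S$, so the asserted equivalence is false in general. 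What rescues it — and what the paper uses — is the conformity of $v_h\in\mathcal{S}^1_D(\mathcal{T}_h)$: the tangential derivative of $v_h$ is continuous across $S$, hence $n_T=\pm\jump{\nabla v_h}_S/\vert\jump{\nabla v_h}_S\vert$ (cf.\ \cite[p.\ 12]{DK08}), which yields $\vert\jump{\AAA(\xi_T,\nabla v_h)}_S\cdot n_T\vert\sim\partial_a(\varphi_{\vert\nabla v_h(T)\vert})(\xi_T,\vert\jump{\nabla v_h}_S\vert)$, and then $(\varphi_{\vert\nabla v_h(T)\vert})^*\circ\partial_a(\varphi_{\vert\nabla v_h(T)\vert})\sim\vert F(\xi_T,\cdot)\vert^2$ turns the conjugate of the normal residual into $\vert\jump{F(\xi_T,\nabla v_h)}_S\vert^2$, up to the $\omega_{p,\omega_S}$-perturbations from switching quadrature points between the two elements of $\omega_S$. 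Without identifying this geometric fact (and the extra shift changes between $\vert\nabla v_h(T)\vert$ and $\vert\nabla v_h(T')\vert$ it forces), your sketch of \eqref{lem:efficiency.2} does not close.
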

		
		\begin{proof}
		 We generalize the procedure in the proof of \cite[Lem.\  3.2]{K22CR}.
		 
		 \textit{ad \eqref{lem:efficiency.1}.} Let $T \hspace{-0.1em}\in \hspace{-0.1em} \mathcal{T}_h$ be fixed, but arbitrary. Then, there exists a bubble~function~${b_T \hspace{-0.1em}\in  \hspace{-0.1em}W^{1,\infty}_0(T)}$ such that $0 \leq b_T   \lesssim   1 $ in $T$, $\vert \nabla b_T\vert  \lesssim \smash{h^{-1}_T}$ in $T$, and $\smash{\langle b_T\rangle_T} = 1$, where~the~hidden~\mbox{constant}~in~$\lesssim$ depends only on the chunkiness $\omega_0>0$.
		 Using \eqref{eq:pDirichletW1p} and integration-by-parts, taking~into~account that $\AAA_h(\cdot,\nabla v_h)\in \mathcal{L}^0(\mathcal{T}_h)^d$ and ${b_T\in W^{1,\infty}_0(T)}$, for every $\lambda\in \mathbb{R}$, we find that
		 \begin{align}
		 (\AAA(\cdot,\nabla u)-\AAA_h(\cdot,\nabla v_h), \nabla(\lambda b_T))_T=(f,\lambda b_T)_T\,.\label{lem:efficiency.5}
		 \end{align}
		 For the special choice $\lambda_T\coloneqq\textup{sgn}(f_h)\partial_a((\varphi_{\vert\nabla v_h\vert})^*)(\xi_T,h_T\vert f_h\vert )\in\mathbb{R}$, 
		 by the Fenchel--Young~identity (\textit{cf.}\ \cite[Prop.\ 5.1,~p.~21]{ET99}), we obtain
		 \begin{align}
		  (h_T f_h)\lambda_T=(\varphi_{\vert\nabla v_h\vert})^*(\xi_T,h_T\vert f_h\vert)+\varphi_{\vert\nabla v_h\vert}(\xi_T,\vert\lambda_T\vert)\,.\label{lem:efficiency.6}
		 \end{align}
		 Then, choosing $\lambda=h_T\lambda_T\in \mathbb{R}$ (\textit{cf.}\ \eqref{lem:efficiency.6}) in \eqref{lem:efficiency.5}, we observe that
		 \begin{align}\label{lem:efficiency.7}
		 \begin{aligned}
		  \rho_{((\varphi_h)_{\vert\nabla v_h\vert})^*,T}(h_Tf_h)+\rho_{(\varphi_h)_{\vert\nabla v_h\vert},T}(\lambda_T)&=(f,h_T\lambda_T b_T)_T\\&\quad+( f_h-f,h_T\lambda_T b_T)_T
		  \\&= (\AAA_h(\cdot,\nabla u)-\AAA_h(\cdot,\nabla v_h), \nabla(h_T\lambda_T b_T))_T\\&\quad+
 (\AAA(\cdot,\nabla u)-\AAA_h(\cdot,\nabla u), \nabla(h_T\lambda_T b_T))_T\\&\quad +(f_h-f,h_T\lambda_T b_T)_T
  \\&\eqqcolon I_h^1+I_h^2+I_h^3\,.
		  \end{aligned}
		 \end{align}
		 Applying the $\varepsilon$-Young inequality \eqref{ineq:young} with $\psi=\varphi_{\vert\nabla v_h\vert}(\xi_T,\cdot)$ together~with~\eqref{eq:hammera}, also using that $\vert b_T\vert+h_T\vert \nabla b_T\vert\lesssim 1$ in $T$, we find that
		 \begin{align}\label{lem:efficiency.8}
		 \begin{aligned}
		  I_h^1&\leq c_\varepsilon\,\|F_h(\cdot,\nabla v_h)-F_h(\cdot,\nabla u)\|_{2,T}^2+\varepsilon\,\rho_{(\varphi_h)_{\vert\nabla v_h\vert},T}(\lambda_T)\,,\\
		  I_h^3&\leq c_\varepsilon\,\mathrm{osc}_h^2(f,v_h,T)+\varepsilon\,\rho_{(\varphi_h)_{\vert\nabla v_h\vert},T}(\lambda_T)\,.
		 \end{aligned}
		 \end{align}
  Applying the $\varepsilon$-Young inequality \eqref{ineq:young} with $\psi\hspace*{-0.15em}=\hspace*{-0.15em}\varphi_{\vert\nabla u\vert}(\xi_T,\cdot)$ together~with~\eqref{eq:hammerh},~${\vert b_T\vert\hspace*{-0.15em}+\hspace*{-0.15em}h_T\vert \nabla b_T\vert\hspace*{-0.15em}\lesssim\hspace*{-0.15em} 1}$ in $T$, the shift change \eqref{lem:shift-change.1}, and Lemma~\ref{lem:A-Ah}\eqref{eq:Ah-A}, we obtain
		 \begin{align}\label{lem:efficiency.8.1}
   \begin{aligned}
 \hspace{-2mm}I_h^2&
 \leq c_\varepsilon\,\|F_h^*(\cdot,\AAA_h(\cdot,\nabla u))-F_h^*(\cdot,\AAA(\cdot,\nabla u))\|_{2,T}^2
 +\varepsilon\,\rho_{(\varphi_h)_{\vert\nabla u\vert},T}(\lambda_T)
 \\&\lesssim c_\varepsilon\,\|\omega_{p,T}(h_T)^2\, (1+\vert \nabla u\vert)^{p(\cdot)s}\|_{1,T}
 +\varepsilon\,\big[\rho_{(\varphi_h)_{\vert\nabla v_h\vert},T}(\lambda_T)+\|F_h(\cdot,\nabla v_h)-F_h(\cdot,\nabla u)\|_{2,T}^2\big]\,.\hspace{-2mm}
 \end{aligned}
		 \end{align}
  Taking into account  \eqref{lem:efficiency.8} and \eqref{lem:efficiency.8.1} in \eqref{lem:efficiency.7}, for sufficiently small $\varepsilon>0$,~we~conclude~that
		 \begin{align}\label{lem:efficiency.9}
		 \begin{aligned}
		  \rho_{((\varphi_h)_{\vert\nabla v_h\vert})^*,T}(h_T f_h)&\lesssim \|F_h(\cdot,\nabla v_h)-F_h(\cdot,\nabla u)\|_{2,T}^2\\&\quad+\|\omega_{p,T}(h_T)^2 (1+\vert \nabla u\vert^{p(\cdot)s})\|_{1,T}+\mathrm{osc}_h^2(f,v_h,T)\,.
		  \end{aligned}
		 \end{align}
		 
		 \textit{ad \eqref{lem:efficiency.2}.} Let $S \hspace{-0.17em}\in \hspace{-0.17em} \mathcal{S}_h^{i}$ be fixed, but arbitrary. \!Then, there exists a bubble~function~${b_S \hspace{-0.17em}\in\hspace{-0.17em} W^{1,\infty}_0(\omega_S)}$ such that $0  \leq b_S \lesssim  1 $ in $\omega_S$, $\vert \nabla b_S\vert  \lesssim   \smash{h^{-1}_S}$ in $\omega_S$, and $\smash{\langle b_S\rangle_S} = 1$, where~the~hidden~\mbox{constant}~in~$\lesssim$ depends only on the chunkiness $\omega_0>0$.
		 Using \eqref{eq:pDirichletW1p} and integration-by-parts, taking into account that $\AAA_h(\cdot,\nabla v_h)\in \mathcal{L}^0(\mathcal{T}_h)^d$ and $b_S\in W^{1,\infty}_0(\omega_S)$ with $\smash{\langle b_S\rangle_S}=1$,~for~every~${\lambda\in \mathbb{R}}$,~we~find~that\enlargethispage{3mm}
		 \begin{align}\label{lem:efficiency.10}
		 (\AAA(\cdot,\nabla u)-\AAA_h(\cdot,\nabla v_h), \nabla(\lambda b_S))_{\omega_S}=(f,\lambda b_S)_{\omega_S}- \vert S\vert \jump{\AAA_h(\cdot,\nabla v_h)\cdot n}_S\lambda\,.
		 \end{align}
		 Let $T\in \mathcal{T}_h$ be such that $T\subseteq \omega_S$. Then, using the notation $\nabla v_h(T)\coloneqq\nabla v_h|_T$\footnote{In what follows, we employ this notation to indicate that the value of $\nabla v_h(T)\in \mathbb{R}^d$ depends only on the value of $\nabla v_h\in \mathcal{L}^0(\mathcal{T}_h)$ on $T\in \mathcal{T}_h$.}, for the choice
  \begin{align*}
  \lambda_{S,T}\coloneqq \textup{sgn}(\jump{\AAA_h(\cdot,\nabla v_h)\cdot n}_S)\partial_a((\varphi_{\vert\nabla v_h(T)\vert})^*)(\xi_T,\vert \jump{\AAA_h(\cdot,\nabla v_h)\cdot n}_S\vert)\in \mathbb{R}\,,
  \end{align*}
		 by the Fenchel--Young identity (\textit{cf.}\ \cite[Prop.\ 5.1,~p.~21]{ET99}),~it~holds~that
		 \begin{align}\label{lem:efficiency.11}
		 \jump{\AAA_h(\cdot,\nabla v_h)\cdot n}_S\lambda_{S,T}=(\varphi_{\vert\nabla v_h(T)\vert})^*(\xi_T,\vert\jump{\AAA_h(\cdot,\nabla v_h)\cdot n}_S\vert)+\varphi_{\vert\nabla v_h(T)\vert}(\xi_T,\vert\lambda_{S,T}\vert)\,.
		 \end{align}
		 Next, let $T'\in \mathcal{T}_h\setminus\{T\}$ be such that $T'\subseteq \omega_S$. Then, due to the convexity of $(\varphi_{\vert\nabla v_h(T)\vert})^*(\xi_T,\cdot)$~and $\Delta_2((\varphi_{\vert\nabla v_h(T)\vert})^*(\xi_T,\cdot)) \lesssim 2^{\smash{\max\{2,(p^-)'\}}}$, also using the shift change \eqref{lem:shift-change.3} and \eqref{eq:hammerh},~we~have~that
		 \begin{align}\label{lem:efficiency.12.0}
  \begin{aligned}
		 (\varphi_{\vert\nabla v_h(T)\vert})^*&(\xi_T,\vert\jump{\AAA_h(\cdot,\nabla v_h)\cdot n}_S\vert) 
  \\&\gtrsim (\varphi_{\vert\nabla v_h(T)\vert})^*(\xi_T,\vert \jump{\AAA(\xi_T,\nabla v_h)}_S\cdot n_{T}\vert)\\&\quad-
  (\varphi_{\vert\nabla v_h(T)\vert})^*(\xi_T,\vert (\AAA(\xi_T,\nabla v_h(T'))-\AAA(\xi_{T'},\nabla v_h(T')))\cdot n_{T}\vert)
  \\&\gtrsim (\varphi_{\vert\nabla v_h(T)\vert})^*(\xi_T,\vert \jump{\AAA(\xi_T,\nabla v_h)}_S\cdot n_{T}\vert)\\&\quad-
  c_\varepsilon\,(\varphi_{\vert\nabla v_h(T')\vert})^*(\xi_T,\vert \AAA(\xi_T,\nabla v_h(T'))-\AAA(\xi_{T'},\nabla v_h(T'))\vert)
  \\&\quad-
  \varepsilon\,\vert F(\xi_T,\nabla v_h(T'))-F(\xi_T,\nabla v_h(T))\vert^2
  \\&\gtrsim (\varphi_{\vert\nabla v_h(T)\vert})^*(\xi_T,\vert \jump{\AAA(\xi_T,\nabla v_h)}_S\cdot n_{T}\vert)
 \\&\quad-c_\varepsilon\,\vert F^*(\xi_T,\AAA(\xi_T,\nabla v_h(T')))-F^*(\xi_T,\AAA(\xi_{T'},\nabla v_h(T')))\vert^2 
 \\&\quad-
  \varepsilon\,\vert\jump{F(\xi_T,\nabla v_h)}_S\vert^2
  \eqqcolon I_h^1-c_\varepsilon\, I_h^2-\varepsilon\, I_h^3\,.
  \end{aligned}
		 \end{align}
 Resorting to Lemma \ref{lem:Ax-Axh}\eqref{eq:Axh-Ax}, we deduce that
 \begin{align}\label{lem:efficiency.12.1}
 \smash{ I_h^2\lesssim \omega_{p,\omega_S}(h_S)^2\, (1+\vert \nabla_{\! h} v_h(T')\vert^{p(\xi_{T'})s})\,.}
 \end{align}
 Using that $n_T=\pm\frac{\jump{\nabla v_h}_S}{\vert \jump{\nabla v_h}_S\vert}$ since $v_h\in \mathcal{S}^1_D(\mathcal{T}_h)$ (\textit{cf.}\ \cite[p.\ 12]{DK08}), and \eqref{eq:hammera}, we find that
 \begin{align*}
 \smash{\vert \jump{\AAA(\xi_T,\nabla v_h)}_S\cdot n_{T}\vert  
 \sim \tfrac{\varphi_{\vert \nabla v_h(T)\vert }(\xi_T,\vert \jump{\nabla v_h}_S\vert)}{\vert \jump{\nabla v_h}_S\vert}\sim\partial_a(\varphi_{\vert \nabla v_h(T)\vert })(\xi_T,\vert \jump{\nabla v_h}_S\vert) \,,}
 \end{align*}
 and, thus, using $(\varphi_{\vert\nabla v_h(T)\vert})^*(\xi_T,\cdot)\circ \partial_a(\varphi_{\vert\nabla v_h(T)\vert})(\xi_T,\cdot)\hspace*{-0.1em}\sim\hspace*{-0.1em} \vert F(\xi_T,\cdot)\vert^2$ (\textit{cf.}\ \cite[(2.6)]{DK08}),~we~get~${I_h^1\hspace*{-0.1em}\sim\hspace*{-0.1em} I_h^3}$.
 Therefore, for sufficiently small $\varepsilon>0$, resorting to Lemma \ref{lem:Ax-Axh}\eqref{eq:Fxh-Fx}, we deduce that
 \begin{align}\label{lem:efficiency.12.2}
\begin{aligned}
 I_h^1-\varepsilon\, I_h^3&\gtrsim\vert\jump{F_h(\cdot,\nabla v_h)}_S\vert^2- \vert F(\xi_{T'},\nabla v_h(T'))-F(\xi_T,\nabla v_h(T'))\vert^2  \\&\gtrsim\vert\jump{F_h(\cdot,\nabla v_h)}_S\vert^2-\omega_{p,\omega_S}(h_S)^2\, (1+\vert \nabla v_h(T')\vert^{p(\xi_{T'})s})
  \,.\end{aligned}
 \end{align}
 Combining \eqref{lem:efficiency.12.1} and \eqref{lem:efficiency.12.2} in \eqref{lem:efficiency.12.0}, we arrive at
 \begin{align}\label{lem:efficiency.12}
 \hspace{-2mm}\vert\jump{F_h(\cdot,\hspace{-0.1em}\nabla v_h)}_S\vert^2\hspace{-0.1em}\lesssim \hspace{-0.1em}(\varphi_{\vert\nabla v_h(T)\vert})^*(\xi_T,\hspace{-0.1em}\vert\jump{\AAA_h(\cdot,\hspace{-0.1em}\nabla v_h)\cdot n}_S\vert)\hspace{-0.1em}+\hspace{-0.1em}\omega_{p,\omega_S}(h_S)^2\, (1\hspace{-0.1em}+\hspace{-0.1em}\vert \nabla v_h(T')\vert^{p(\xi_{T'})s})\,.\hspace{-1mm}
 \end{align}
		 For $\smash{\lambda=\frac{\vert \omega_S\vert}{\vert S\vert}\lambda_{S,T}\in \mathbb{R}}$ (\textit{cf.}~\eqref{lem:efficiency.11}) in \eqref{lem:efficiency.10}, also using \eqref{lem:efficiency.12}, we observe that
		 \begin{align}\label{lem:efficiency.12.2.1}
		 \begin{aligned}
		  h_S\|\jump{F_h(\cdot,\nabla v_h)}_S\|_{2,S}^2
		  &+\rho_{(\varphi_h)_{\vert\nabla v_h(T)\vert},\omega_S}(\lambda_{S,T})-\| \omega_{p,\omega_S}(h_S)^2(1+\vert \nabla v_h\vert^{p_h(\cdot)s})\|_{1,\omega_S}
		  \\&\lesssim \vert \omega_S\vert \jump{\AAA_h(\cdot,\nabla v_h)\cdot n}_S\lambda_{S,T}
 \\&= \tfrac{\vert \omega_S\vert}{\vert S\vert}(\AAA_h(\cdot,\nabla v_h)-\AAA_h(\cdot,\nabla u), \nabla(\lambda_{S,T} b_S))_{\omega_S}\\&\quad+\tfrac{\vert \omega_S\vert}{\vert S\vert}(\AAA_h(\cdot,\nabla u)-\AAA(\cdot,\nabla u), \nabla(\lambda_{S,T} b_S))_{\omega_S}
		  \\&\quad+\tfrac{\vert \omega_S\vert}{\vert S\vert}(f_h,\lambda_{S,T} b_S)_{\omega_S}+\tfrac{\vert \omega_S\vert}{\vert S\vert}(f-f_h,\lambda_{S,T} b_S)_{\omega_S}
  \\&\eqqcolon I_h^1+I_h^2+I_h^3+I_h^4\,.
		 \end{aligned}
		 \end{align}
		 Applying the $\varepsilon$-Young inequality \eqref{ineq:young} with $\psi=\varphi_{\vert\nabla v_h(T')\vert }(\xi_{T'},\cdot)$ together with \eqref{eq:hammera} for all $T'\in \mathcal{T}_h$ with $T'\subseteq \omega_S$, and $\vert b_S\vert+h_S\vert \nabla b_S\vert\lesssim 1$ in $\omega_S$, we obtain\enlargethispage{3mm}
		 \begin{align}\label{lem:efficiency.13}
		 \begin{aligned}
		 I_h^1 &\leq \smash{c_\varepsilon\,\|F_h(\cdot,\nabla v_h)-F_h(\cdot,\nabla u)\|_{2,\omega_S}^2+\varepsilon\,\rho_{(\varphi_h)_{\vert\nabla v_h\vert},\omega_S}(\lambda_{S,T})}\,,\\[0.5mm]
		  I_h^3&\leq 
		 \smash{ c_\varepsilon\,\rho_{((\varphi_h)_{\vert\nabla v_h\vert})^*,\omega_S}(h_{\mathcal{T}}f)+\varepsilon\,\rho_{(\varphi_h)_{\vert\nabla v_h\vert},\omega_S}(\lambda_{S,T})}\,,\\[0.5mm]
     I_h^4&\leq 
		  \smash{c_\varepsilon\,\textup{osc}(f,v_h,\omega_S)+\varepsilon\,\rho_{(\varphi_h)_{\vert\nabla v_h\vert},\omega_S}(\lambda_{S,T})}\,.
		  \end{aligned}
		 \end{align}
  Applying the $\varepsilon$-Young inequality \eqref{ineq:young} with $\psi=\varphi_{\vert\nabla u\vert }(\xi_{T'},\cdot)$ together with \eqref{eq:hammerh} for all $T'\in \mathcal{T}_h$ with $T'\subseteq \omega_S$, $\vert b_S\vert+h_S\vert \nabla b_S\vert\lesssim 1$ in $\omega_S$, the shift change \eqref{lem:shift-change.1},  and Lemma~\ref{lem:A-Ah}\eqref{eq:Ah-A},~we~obtain
 \begin{align}\label{lem:efficiency.13.2}
  I_h^2& 
  \leq c_\varepsilon\,\|F_h^*(\cdot,\AAA_h(\cdot, \nabla u))-F_h^*(\cdot,\AAA(\cdot,\nabla u))\|_{2,\omega_S}^2
+\varepsilon\,\rho_{(\varphi_h)_{\vert\nabla u\vert},\omega_S}(\lambda_{S,T})
  \\&\lesssim c_\varepsilon\,\|\omega_p(h_{\mathcal{T}})^2\,(1+\vert \nabla u\vert^{p(\cdot)s})\|_{1,\omega_S}
  +\varepsilon\,\big[\rho_{(\varphi_h)_{\vert\nabla v_h\vert},\omega_S}(\lambda_{S,T})+\|F_h(\cdot,\nabla v_h)-F_h(\cdot,\nabla u)\|_{2,\omega_S}^2\big]\,.\notag
		 \end{align}
    The shift change \eqref{lem:shift-change.1} on $T'\in \mathcal{T}_h\setminus\{T\}$ with $T'\subseteq \omega_S$ further yields that
        \begin{align}\label{lem:efficiency.14}
            \begin{aligned}
                \rho_{(\varphi_h)_{\vert\nabla v_h\vert},\omega_S}(\lambda_T^S)&\lesssim \rho_{(\varphi_h)_{\vert\nabla v_h(T)\vert},\omega_S}(\lambda_T^S)+h_S\|\jump{F(\xi_{T'},\nabla v_h)}_S\|_{2,S}^2
                \\&\lesssim \rho_{(\varphi_h)_{\vert\nabla v_h(T)\vert},\omega_S}(\lambda_T^S)+ h_S\|\jump{F_h(\cdot,\nabla v_h)}_S\|_{2,S}^2\\&\quad+\|\omega_{p,\omega_S}(h_S)^2\,(1+\vert \nabla v_h\vert^{p_h(\cdot)s})\|_{1,\omega_S}\,. \end{aligned}
        \end{align}
        Combining \eqref{lem:efficiency.12.2.1}--\eqref{lem:efficiency.14}, 
		 for sufficiently small $\varepsilon>0$,  we conclude that \eqref{lem:efficiency.2} applies.\enlargethispage{5mm}
		\end{proof}

		\subsection{Patch-shift-to-element-shift inequality}

  \qquad The third tool is an estimate that enables to pass from element-patch-shifts to element-shifts. This is essential in the application of  quasi-interpolation operators that are only element-to-patch stable, \textit{e.g.}, the node-averaging quasi-interpolation operator $\Pi_h^{\textit{av}}\colon\mathcal{S}^{1,\textit{cr}}_D(\mathcal{T}_h)\to \mathcal{S}^1_D(\mathcal{T}_h)$~(\textit{cf.}~\eqref{eq:a6}).
		
		\begin{lemma}\label{lem:patch_to_element}
		 Let $p\in C^0(\overline{\Omega})$ with $p^->1$ and let $\delta\ge 0$.
		 Then, there exists some $s>1$, which can chosen to be close to $1$ if $h_T>0$ is close to $0$, such that for every $y_h\in L^{p_h(\cdot)}(\Omega;\mathbb{R}^d)$,~${v_h\in \mathcal{L}^1(\mathcal{T}_h)}$, and $T\in \mathcal{T}_h$, it holds that
		 \begin{align*}
  \rho_{(\varphi_h)_{\vert \nabla_{\! h} v_h(T)\vert},\omega_T}(y_h)&\lesssim \rho_{(\varphi_h)_{\vert \nabla_{\! h} v_h\vert},\omega_T}(y_h)+\|\smash{h_{\mathcal{S}}^{1/2}}\jump{F_h(\cdot,\nabla_{\! h} v_h)}\|_{2,\mathcal{S}_h^{i}(T)}^2\\&\quad+\|\omega_{p,\omega_T}(h_T)^2\,(1+\vert \nabla_{\!h}v_h\vert^{p_h(\cdot)s})\|_{1,T}\,.
		 \end{align*}
		 where $\mathcal{S}_h^{i}(T)\coloneqq \mathcal{S}_h(T)\cap \mathcal{S}_h^{i}$ and $\mathcal{S}_h(T)\coloneqq \{S\in \mathcal{S}_h\mid S\cap T\neq \emptyset\}$.
		\end{lemma}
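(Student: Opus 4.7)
My plan is to decompose $\omega_T$ into its elements, replace the global shift $\vert\nabla_{\!h} v_h(T)\vert$ by the local shift $\vert\nabla_{\!h}v_h(T')\vert$ on each $T'\subseteq\omega_T$ via Lemma \ref{lem:shift-change}, and then telescope the resulting shift mismatch along a face-adjacent path inside $\omega_T$ in order to convert it into a sum of face jumps of $F_h(\cdot,\nabla_{\!h}v_h)$.

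First, on each $T'\subseteq\omega_T$, both $\nabla_{\!h}v_h(T)$ and $\nabla_{\!h}v_h(T')$ are constant vectors and $p_h|_{T'}=p(\xi_{T'})$ is a constant exponent, so applying the shift-change inequality \eqref{lem:shift-change.1} pointwise at $\xi_{T'}$ yields
\begin{align*}
(\varphi_h)_{\vert\nabla_{\!h}v_h(T)\vert}(x,\vert y_h(x)\vert)\lesssim (\varphi_h)_{\vert\nabla_{\!h}v_h(T')\vert}(x,\vert y_h(x)\vert)+\vert F(\xi_{T'},\nabla_{\!h}v_h(T))-F(\xi_{T'},\nabla_{\!h}v_h(T'))\vert^2
\end{align*}
for a.e.\ $x\in T'$, the second summand being constant on $T'$. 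Summing over the (uniformly bounded by the chunkiness) family $\{T'\subseteq\omega_T\}$, the first summand assembles into $\rho_{(\varphi_h)_{\vert\nabla_{\!h}v_h\vert},\omega_T}(y_h)$, so it only remains to control the constant mismatches arising on each $T'$.

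Since $\textup{int}(\omega_T)$ is connected and every $T'\subseteq\omega_T$ shares a vertex with $T$, I select a face-adjacent chain $T=T_0,T_1,\dots,T_k=T'$ inside $\omega_T$ of uniformly bounded length $k$, whose interior sides $S_i=\partial T_i\cap\partial T_{i+1}$ all lie in $\mathcal{S}_h^{i}(T)$. A telescoping and the triangle inequality reduce the constant mismatch to $\sum_{i=0}^{k-1}\vert F(\xi_{T'},\nabla_{\!h}v_h(T_i))-F(\xi_{T'},\nabla_{\!h}v_h(T_{i+1}))\vert^2$; shifting the evaluation point from $\xi_{T'}$ back to the native $\xi_{T_i}$ and $\xi_{T_{i+1}}$ by Lemma \ref{lem:Ax-Axh}\eqref{eq:Fxh-Fx} (exactly as in the derivation of \eqref{lem:efficiency.12.2}) leaves a principal contribution $\vert\jump{F_h(\cdot,\nabla_{\!h}v_h)}_{S_i}\vert^2$ (constant on $S_i$) at the cost of $\omega_{p,\omega_T}(h_T)^2(1+\vert\nabla_{\!h}v_h\vert^{p_h(\cdot)s})$. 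Integrating over $T'$ and using the shape-regularity relation $\vert T'\vert\sim h_{S_i}\vert S_i\vert$ converts each jump contribution into $h_{S_i}\|\jump{F_h(\cdot,\nabla_{\!h}v_h)}_{S_i}\|_{2,S_i}^2$, and summation over $T'\subseteq\omega_T$ assembles the three advertised terms, as each side in $\mathcal{S}_h^{i}(T)$ is visited $O(1)$ times.

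The main obstacle I anticipate is this second step: after the shift change pins the evaluation point at $\xi_{T'}$, I must still retrieve the genuine jump $\jump{F_h(\cdot,\nabla_{\!h}v_h)}_{S_i}$, whose two values of $F$ are anchored at the distinct native points $\xi_{T_i}$ and $\xi_{T_{i+1}}$. The induced correction terms $\vert F(\xi_{T'},\cdot)-F(\xi_{T_j},\cdot)\vert^2$ must be dominated uniformly in $s>1$ close to $1$ by the modulus-of-continuity term---this is precisely where Lemma \ref{lem:Ax-Axh}\eqref{eq:Fxh-Fx} applies, since $\vert\xi_{T'}-\xi_{T_j}\vert\lesssim h_T$ for all $T_j\subseteq\omega_T$ by shape regularity.
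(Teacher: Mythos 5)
Your argument is essentially the paper's own proof: element-wise application of the shift change \eqref{lem:shift-change.1} on $\omega_T$, conversion of the resulting constant mismatch into jumps of $F_h(\cdot,\nabla_{\!h}v_h)$ across a uniformly bounded chain of sides in $\mathcal{S}_h^{i}(T)$ plus exponent-variation terms via Lemma \ref{lem:Ax-Axh}\eqref{eq:Fxh-Fx} (the paper uses the same pointwise bound in the form \eqref{eq:Fh-F.1}), followed by the rescaling $\vert T'\vert\sim h_S\vert S\vert\sim\vert T\vert$ and summation with bounded overlap. One small bookkeeping difference: since you telescope with all evaluations pinned at $\xi_{T'}$ and only afterwards move each link to its native quadrature points, your correction terms involve $\nabla_{\!h}v_h$ on all chain elements and therefore assemble into $\|\omega_{p,\omega_T}(h_T)^2\,(1+\vert\nabla_{\!h}v_h\vert^{p_h(\cdot)s})\|_{1,\omega_T}$ rather than the stated $\|\cdot\|_{1,T}$ (the paper gets the $T$-integral by first shifting only the $T$-anchored value from $\xi_{T'}$ to $\xi_T$ and then telescoping native values of $F_h$, which produce pure jumps); this weaker patch version is equally sufficient for the application in Theorem \ref{thm:best-approx}.
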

		
		\begin{proof}
        Applying for every $T'\in \mathcal{T}_h$ with $T'\subseteq \omega_T$, the shift change \eqref{lem:shift-change.1}, we arrive at
		 \begin{align}\label{lem:patch_to_element.1}
		  \smash{\rho_{(\varphi_h)_{\vert \nabla_{\! h} v_h(T)\vert},\omega_T}(y_h)\lesssim \rho_{(\varphi_h)_{\vert \nabla_{\! h} v_h\vert},\omega_T}(y_h)+\|F_h(\cdot,\nabla_{\! h} v_h(T))-F_h(\cdot,\nabla_{\! h} v_h)\|_{2,\omega_T}^2\,.}
		 \end{align}
		 Since each $T'\in \mathcal{T}_h$ with $T'\subseteq \omega_T$ can be reached by passing through a uniformly bounded~number (depending on $\omega_0\hspace*{-0.1em} > \hspace*{-0.1em}0$) of sides ${S\hspace*{-0.1em}\in\hspace*{-0.1em} \mathcal{S}_h^{i}(T)}$, for every $T'\hspace*{-0.1em} \in \hspace*{-0.1em}\mathcal{T}_h$ with $T' \hspace*{-0.1em}\subseteq\hspace*{-0.1em} \omega_T$,~using~\eqref{eq:Fh-F.1},~it~holds~that
     \begin{align}\label{lem:patch_to_element.2}
  \begin{aligned}
  &\vert F(\xi_{T'},\nabla_{\! h} v_h(T))-F(\xi_{T'},\nabla_{\! h} v_h(T'))\vert^2\\&\quad\lesssim \vert F(\xi_T,\nabla_{\! h} v_h(T))-F(\xi_{T'},\nabla_{\! h} v_h(T'))\vert^2+\vert F(\xi_{T'},\nabla_{\! h} v_h(T))-F(\xi_T,\nabla_{\! h} v_h(T))\vert^2 
 \\&\quad\lesssim \smash{\vert T\vert^{-1}\| \smash{h_{\mathcal{S}}^{1/2}}\jump{F_h(\cdot,\nabla_{\! h} v_h)}_S\|^2_{2,\mathcal{S}_h^{i}(T)}}
  +\vert T\vert^{-1}\|\omega_{p,\omega_T}(h_T)^2\,(1+\vert \nabla_{\!h}v_h\vert^{p_h(\cdot)s})\|_{1,T}\,.
  \end{aligned}
  \end{align}
  Eventually, multiplying \eqref{lem:patch_to_element.2} by $\vert T'\vert$ for all $T'\in \mathcal{T}_h$ with $T'\subseteq \omega_T$,
  due to $\vert T'\vert \sim \vert T\vert$, 
  we~arrive~at the claimed estimate.
	\end{proof}
	 
 \begin{proof}[Proof (of Theorem \ref{thm:best-approx})]
				Abbreviating $e_h\coloneqq v_h-u_h^{\textit{cr}}\in \smash{\mathcal{S}^{1,\textit{cr}}_D(\mathcal{T}_h)}$ and resorting~to~\eqref{eq:pDirichletW1p}~and~\eqref{eq:pDirichletS1crD} as well as that $f-f_h\perp \Pi_he_h$ in $L^2(\Omega)$, we arrive at
				\begin{align}
				\begin{aligned}
					( \AAA_h(\cdot,\nabla v_h)-\AAA_h(\cdot,\nabla_{\!h} u_h^{\textit{cr}}),\nabla_{\!h} e_h )_\Omega&=
					(\AAA_h(\cdot,\nabla v_h),\nabla_{\!h}( e_h- \Pi_h^{\textit{av}} e_h) )_\Omega	\\&\quad+( f,\Pi_h^{\textit{av}} e_h-e_h)_\Omega
					\\&\quad+( \AAA_h(\cdot,\nabla v_h)-\AAA_h(\cdot,\nabla u) ,\nabla \Pi_h^{\textit{av}} e_h)_\Omega
					\\&\quad+(f-f_h,e_h-\Pi_he_h)_\Omega
			 \\&
    \eqqcolon I_h^1+I_h^2+I_h^3+I_h^4\,.
			 \end{aligned}\label{thm:best-approx.1}
				\end{align}
				
				\textit{ad $I_h^1$.} An element-wise integration-by-parts, 
             $\jump{\AAA_h(\cdot,\nabla v_h)\hspace{-0.1em}\cdot \hspace{-0.1em} n( e_h- \Pi_h^{\textit{av}} e_h)}_S\hspace{-0.1em}=\hspace{-0.1em}\jump{\AAA_h(\cdot,\nabla v_h)\hspace{-0.1em}\cdot\hspace{-0.1em} n }_S $ $\{e_h-\Pi_h^{\textit{av}} e_h\}_S +\{\AAA_h(\cdot,\nabla v_h)\cdot n\}_S\jump{e_h\hspace{-0.1em}-\hspace{-0.1em}\Pi_h^{\textit{av}} e_h}_S$ on $S$, $\int_S{\jump{e_h\hspace*{-0.1em}-\hspace*{-0.1em}\Pi_h^{\textit{av}} e_h}_S\,\textup{d}s}\hspace*{-0.1em}=\hspace*{-0.1em}0$ and ${\{\AAA_h(\cdot,\nabla v_h)\hspace*{-0.1em}\cdot\hspace*{-0.1em} n\}_S}$ $=\textup{const}$ on $S$ for all ${S\in \mathcal{S}_h^{i}}$, the discrete trace inequality \cite[Lem.\ 12.8]{EG21}, and \eqref{eq:a6} with $\psi=\vert\cdot\vert$ and $a=0$ yield\vspace{-0.5mm}
				\begin{align}
					\begin{aligned}
					I_h^1&=\sum_{S\in \mathcal{S}_h^{i}}{\jump{\AAA_h(\cdot,\nabla v_h)\cdot n}_S\int_S{\{e_h- \Pi_h^{\textit{av}} e_h\}_S\,\textup{d}s}}
      \\&
					\lesssim \sum_{S\in \mathcal{S}_h^{i}}{ \sum_{T\in \mathcal{T}_h;T\subseteq \omega_S}{\int_{\omega_T}{\vert \jump{\AAA_h(\cdot,\nabla v_h)\cdot n}_S\vert\vert \nabla_{\!h}e_h\vert\,\textup{d}x}}}\,.
				\end{aligned}	\label{thm:best-approx.2}
				\end{align}
 Next, let $T'\in \mathcal{T}_h\setminus\{T\}$ with $T'\subseteq \omega_S$. Then, resorting to the convexity of $(\varphi_{\vert\nabla v_h(T)\vert})^*(\xi_T,\cdot)$ and $\Delta_2((\varphi_{\vert\nabla v_h(T)\vert})^*(\xi_T,\cdot)) \lesssim 2^{\max\{2,(p^-)'\}}$, also using the shift change \eqref{lem:shift-change.3} and \eqref{eq:hammerh},~we~have~that
		 \begin{align}\label{thm:best-approx.3.0}
  \begin{aligned}
		 (\varphi_{\vert\nabla v_h(T)\vert})^*&(\xi_T,\vert\jump{\AAA_h(\cdot,\nabla v_h)\cdot n}_S\vert)
  \\&\lesssim (\varphi_{\vert\nabla v_h(T)\vert})^*(\xi_T,\vert \jump{\AAA(\xi_T,\nabla v_h)\cdot n_T}_S\vert)
  \\&\quad+(\varphi_{\vert\nabla v_h(T)\vert})^*(\xi_T,\vert (\AAA(\xi_T,\nabla v_h(T'))-\AAA(\xi_{T'},\nabla v_h(T')))\cdot n_{T}\vert)
  \\&\lesssim 
  (\varphi_{\vert\nabla v_h(T)\vert})^*(\xi_T,\vert \jump{\AAA(\xi_T,\nabla v_h)\cdot n_T}_S\vert)
  \\&\quad+\vert F(\xi_T,\nabla v_h(T'))-F(\xi_T,\nabla v_h(T))\vert^2
  \\&\quad+
  \vert F^*(\xi_T,\AAA(\xi_T,\nabla v_h(T')))-F^*(\xi_T,\AAA(\xi_{T'},\nabla v_h(T')))\vert^2 
  \\&\lesssim \vert\jump{F(\xi_T,\nabla v_h)}_S\vert^2+\omega_{p,\omega_S}(h_S)^2\, (1+\vert \nabla v_h(T')\vert^{p(\xi_{T'})s})
  \\& \lesssim\vert\jump{F_h(\cdot,\nabla v_h)}_S\vert^2+ \vert F(\xi_{T'},\nabla v_h(T'))-F(\xi_T,\nabla v_h(T'))\vert^2 \\&\quad + \omega_{p,\omega_S}(h_S)^2\, (1+\vert \nabla v_h(T')\vert^{p(\xi_{T'})s})
  \\&\lesssim\vert\jump{F_h(\cdot,\nabla v_h)}_S\vert^2+\omega_{p,\omega_S}^2\, (1+\vert \nabla v_h(T')\vert^{p(\xi_{T'})s})\,.
  \end{aligned}
		 \end{align}
				Applying for every $T'\in \mathcal{T}_h$ with $T'\subseteq \omega_T$, the $\varepsilon$-Young inequality~\eqref{ineq:young}~with~${\psi=\smash{\varphi_{\vert\nabla v_h(T)\vert}}(\xi_{T'},\cdot)}$, \eqref{thm:best-approx.3.0},
				and the finite overlapping of the element~patches~$\omega_T$,~${T\in \mathcal{T}_h}$, in \eqref{thm:best-approx.2}, for every $\varepsilon>0$, we conclude that
				\begin{align}\label{thm:best-approx.3}
				 \begin{aligned}
					 I_h^1&\lesssim c_{\varepsilon}\sum_{S\in \mathcal{S}_h^{i}}{\sum_{T\in \mathcal{T}_h;T\subseteq \omega_S}{\int_{\omega_T}{((\varphi_h)_{\vert \nabla v_h(T)\vert})^*(\cdot,\vert \jump{\AAA_h(\cdot,\nabla v_h)\cdot n}_S\vert)\,\textup{d}x}}}
  \\&\quad +\varepsilon\sum_{S\in \mathcal{S}_h^{i}}{\sum_{T\in \mathcal{T}_h;T\subseteq \omega_S}{\int_{\omega_T}{(\varphi_h)_{\vert\nabla v_h(T)\vert}(\cdot,\vert \nabla_{\!h} e_h\vert)\,\textup{d}x}}}
					 	\\
					 	&
					 	\lesssim  c_\varepsilon\,\big[\|\smash{h_{\mathcal{S}}^{1/2}}\jump{F_h(\cdot,\nabla v_h)}\|_{2,\mathcal{S}_h^{i}}^2 +\|\omega_p(h_{\mathcal{T}})^2\,(1+\vert\nabla_{\! h} v_h\vert^{p_h(\cdot)s})\|_{1,\Omega} \big]\\&\quad + \varepsilon\,  \sum_{T\in \mathcal{T}_h}{ \rho_{(\varphi_h)_{\vert\nabla v_h(T)\vert},\omega_T}( \nabla_{\!h} e_h)}
					 	\,.
					\end{aligned}
				\end{align}
  Appealing to Lemma \ref{lem:patch_to_element} with $y_h= \nabla_{\!h} e_h\in L^{p_h(\cdot)}(\Omega;\mathbb{R}^d)$, we have that
				\begin{align}\label{thm:best-approx.6}
 \begin{aligned}
				 \sum_{T\in \mathcal{T}_h}{\rho_{(\varphi_h)_{\vert\nabla v_h(T)\vert},\omega_T}( \nabla_{\!h} e_h)}&\lesssim \rho_{(\varphi_h)_{\vert\nabla v_h\vert},\Omega}(\nabla_{\!h} e_h)
     +\|\smash{h_{\mathcal{S}}^{1/2}}\jump{F_h(\cdot,\nabla v_h)}\|_{2,\mathcal{S}_h^{i}}^2
 \\[-2mm]&\quad+\|\omega_p(h_{\mathcal{T}})^2\,(1+\vert \nabla v_h\vert^{p_h(\cdot)s})\|_{1,\Omega}\,.
					\end{aligned}
				\end{align}
				Thus, resorting in \eqref{thm:best-approx.3} to \eqref{lem:efficiency.2} and \eqref{eq:hammera}, we deduce that
				\begin{align}\label{thm:best-approx.3.1}
				  \begin{aligned}
					 I_h^1&
					 \lesssim  c_\varepsilon\,\big[\|F_h(\cdot,\nabla v_h)-F_h(\cdot,\nabla u)\|_{2,\Omega}^2\big]\\&\quad\quad+\|\omega_p(h_{\mathcal{T}})^2\,(1+\vert \nabla u\vert^{p(\cdot)s}+\vert \nabla v_h\vert^{p_h(\cdot)s})\|_{1,\Omega}+\mathrm{osc}_h^2(f,v_h) \big]
 \\&\quad+ \varepsilon\,  
 \|F_h(\cdot,\nabla v_h)-F_h(\cdot,\nabla_{\!h} u_h^{\textit{cr}})\|_{2,\Omega}^2
 \,.
					\end{aligned}
				\end{align}
				
				\textit{ad $I_h^2$.}
				 Applying for every $T\in \mathcal{T}_h$ the $\varepsilon$-Young inequality \eqref{ineq:young} with ${\psi=\varphi_{\vert\nabla v_h(T)\vert}(\xi_T,\cdot)}$, for every ${\varepsilon>0}$, we obtain
				\begin{align}
					\begin{aligned}
						I_h^2&\leq c_\varepsilon\,\rho_{((\varphi_h)_{\vert \nabla v_h\vert})^*,\Omega}(h_{\mathcal{T}}
  f)+
						\varepsilon\,\rho_{(\varphi_h)_{\vert \nabla v_h\vert},\Omega}\big(h_{\mathcal{T}}^{-1}( e_h-\Pi_h^{\textit{av}}e_h)\big)\,.
					\end{aligned}\label{thm:best-approx.4}
				\end{align}
				Then, \hspace{-0.1mm}using \hspace{-0.1mm}for \hspace{-0.1mm}every \hspace{-0.1mm}$T\hspace{-0.1em}\in \hspace{-0.1em}\mathcal{T}_h$ \hspace{-0.1mm}the \hspace{-0.1mm}Orlicz-approximation \hspace{-0.1mm}property \hspace{-0.1mm}of \hspace{-0.1mm}$\Pi_h^{\textit{av}}\colon\smash{\mathcal{S}^{1,\textit{cr}}_D(\mathcal{T}_h)}\hspace{-0.1em}\to \hspace{-0.1em}\mathcal{S}^1_D(\mathcal{T}_h)$ (\textit{cf.}\ \eqref{eq:a6} with $\psi=\varphi_h(\xi_T,\cdot)$ and $a=\vert \nabla v_h(T)\vert$), 
    we find that
				\begin{align}\label{thm:best-approx.5}
                    \begin{aligned}
				 \rho_{(\varphi_h)_{\vert \nabla v_h\vert},\Omega}\big(h_{\mathcal{T}}^{-1}( e_h-\Pi_h^{\textit{av}}e_h)\big)&\lesssim\,\sum_{T\in \mathcal{T}_h}{\rho_{(\varphi_h)_{\vert\nabla v_h(T)\vert},\omega_T}( \nabla_{\!h} e_h)}\\&\quad+\|\omega_p(h_{\mathcal{T}})^2\,(1+\vert \nabla v_h\vert^{p_h(\cdot)s}+\vert\nabla_{\! h} e_h\vert^{p_h(\cdot)s})\|_{1,\Omega}\,.
                    \end{aligned}
		  		\end{align}
				Thus, using \eqref{thm:best-approx.5} and \eqref{thm:best-approx.6} in conjunction with \eqref{lem:efficiency.1} and \eqref{lem:efficiency.2} in \eqref{thm:best-approx.4}, we arrive at
				\begin{align}\label{thm:best-approx.6.1}
				 \begin{aligned}
				 	I_h^2&
					 	\lesssim  c_\varepsilon\,\big[\|F_h(\cdot,\nabla v_h)-F_h(\cdot,\nabla u)\|_{2,\Omega}^2 \\&\quad \quad+\|\omega_p(h_{\mathcal{T}})^2\,(1+\vert \nabla u\vert^{p(\cdot)s}+\vert \nabla v_h\vert^{p_h(\cdot)s}+\vert \nabla_{\!h } e_h\vert^{p_h(\cdot)s})\|_{1,\Omega}+\mathrm{osc}_h^2(f,v_h)\big]
					 	\\&\quad+ \varepsilon\,  
					 	\|F_h(\cdot,\nabla v_h)-F_h(\cdot,\nabla_{\!h} u_h^{\textit{cr}})\|_{2,\Omega}^2
					 	\,.
					 \end{aligned}
				\end{align}
				
				\textit{ad $I_h^3$.}
				Applying for every $T\in \mathcal{T}_h$ the $\varepsilon$-Young inequality \eqref{ineq:young} with $\psi=\varphi_{\vert\nabla v_h(T)\vert }(\xi_T,\cdot)$ together with \eqref{eq:hammera}, for~every~${\varepsilon>0}$, we obtain
				\begin{align}
					\begin{aligned}
					I_h^3&\leq c_\varepsilon\,\|F_h(\cdot,\nabla v_h)-F_h(\cdot,\nabla u)\|_{2,\Omega}^2+\varepsilon\,\rho_{(\varphi_h)_{\vert\nabla v_h\vert },\Omega}(\nabla \Pi_h^{\textit{av}}e_h )\,.
				\end{aligned}\label{thm:best-approx.7}
				\end{align}
				Then, using for every $T\in \mathcal{T}_h$ the Orlicz-stability property of $\Pi_h^{\textit{av}}\colon\smash{\mathcal{S}^{1,\textit{cr}}_D(\mathcal{T}_h)}\to \mathcal{S}^1_D(\mathcal{T}_h)$ (\textit{cf.}\ \eqref{eq:a6} with $\psi=\varphi(\xi_T,\cdot)$ and $a=\vert \nabla v_h(T)\vert$),
    we~find~that
				\begin{align}\label{thm:best-approx.8}
                    \begin{aligned}
				        \rho_{(\varphi_h)_{\vert\nabla v_h\vert },\Omega}( \nabla \Pi_h^{\textit{av}}e_h)&\lesssim\sum_{T\in \mathcal{T}_h}{\rho_{(\varphi_h)_{\vert\nabla v_h(T)\vert},\omega_T}( \nabla_{\!h} e_h)}\\&\quad +\|\omega_p(h_{\mathcal{T}})^2\,(1+\vert \nabla v_h\vert^{p_h(\cdot)s}+\vert \nabla_{\! h} e_h\vert^{p_h(\cdot)s})\|_{1,\Omega}\,.
                    \end{aligned}
				\end{align}
				Thus, using \eqref{thm:best-approx.8} and \eqref{thm:best-approx.6} in conjunction with \eqref{lem:efficiency.2} in \eqref{thm:best-approx.7}, we arrive at
				\begin{align}\label{thm:best-approx.8.1}
				 \begin{aligned}
				 	I_h^3&
					 	\lesssim c_\varepsilon\,\big[\|F_h(\cdot,\nabla v_h)-F_h(\cdot,\nabla u)\|_{2,\Omega}^2 \\&\quad \quad+\|\omega_p(h_{\mathcal{T}})^2\,(1+\vert \nabla v_h\vert^{p_h(\cdot)s}+\vert \nabla_{\! h} e_h\vert^{p_h(\cdot)s})\|_{1,\Omega}+\mathrm{osc}_h^2(f,v_h)\big]\\&\quad+\varepsilon\,  
					 	\|F_h(\cdot,\nabla v_h)-F_h(\cdot,\nabla_{\!h} u_h^{\textit{cr}})\|_{2,\Omega}^2
					 	\,.
					 \end{aligned}
				\end{align}
				
				\textit{ad $I_h^4$.}
				Applying for every $T\in \mathcal{T}_h$ the $\varepsilon$-Young inequality \eqref{ineq:young} with $\psi=\smash{\varphi_{\vert\nabla v_h\vert}}$,~for~every~${\varepsilon>0}$, we obtain
				\begin{align}
					\begin{aligned}
					I_h^4&\leq c_\varepsilon\,\mathrm{osc}_h^2(f,v_h) +\varepsilon\,\rho_{\varphi_{\vert\nabla v_h\vert },\Omega}\big(h_{\mathcal{T}}^{-1} (e_h- \Pi_h^{\textit{av}}e_h) \big)\,.
				\end{aligned}	\label{thm:best-approx.9}
				\end{align}
				Thus, using \eqref{thm:best-approx.5} and \eqref{thm:best-approx.6} in conjunction with \eqref{lem:efficiency.2} in \eqref{thm:best-approx.9}, we arrive at
				\begin{align}\label{thm:best-approx.9.1}
				 \begin{aligned}
				 	I_h^4&
					 	\lesssim c_\varepsilon\,\big[\|F_h(\cdot,\nabla v_h)-F_h(\cdot,\nabla u)\|_{2,\Omega}^2 \\&\quad \quad+\|\omega_p(h_{\mathcal{T}})^2\,(1+\vert \nabla v_h\vert^{p_h(\cdot)s}+\vert \nabla_{\! h} e_h\vert^{p_h(\cdot)s})\|_{1,\Omega}+\mathrm{osc}_h^2(f,v_h)\big]\\&\quad + \varepsilon\, 
					 	\|F_h(\cdot,\nabla v_h)-F_h(\cdot,\nabla_{\!h} u_h^{\textit{cr}})\|_{2,\Omega}^2
					 	\,.
					 \end{aligned}
				\end{align}
				Then, combining \eqref{eq:hammera}, \eqref{thm:best-approx.3.1}, \eqref{thm:best-approx.6.1}, \eqref{thm:best-approx.8.1}, and \eqref{thm:best-approx.9.1} 
				in \eqref{thm:best-approx.1}, for every $\varepsilon>0$, we arrive at
				\begin{align*}
				  \|F_h(\cdot,\nabla v_h)-F_h(\cdot,\nabla_{\!h}u_h^{\textit{cr}})\|_{2,\Omega}^2
      &
					 	\lesssim  c_\varepsilon\,\big[\|F_h(\cdot,\nabla v_h)-F_h(\cdot,\nabla u)\|_{2,\Omega}^2+\mathrm{osc}_h^2(f,v_h) \\&\quad \quad+\|\omega_p(h_{\mathcal{T}})^2\,(1+\vert \nabla u\vert^{p(\cdot)s}+(\vert \nabla v_h\vert+\vert \nabla_{\! h} e_h\vert)^{p_h(\cdot)s})\|_{1,\Omega}
                        \big]
					 	\\&\quad+ \varepsilon\,  
					 	\|F_h(\cdot,\nabla v_h)-F_h(\cdot,\nabla_{\!h} u_h^{\textit{cr}})\|_{2,\Omega}^2
					 	\,.
				\end{align*}
    Next, choosing $\varepsilon>0$ sufficiently small, for every $v_h\in \smash{\mathcal{S}^{1,\textit{cr}}_D(\mathcal{T}_h)}$, we obtain
				\begin{align}\label{thm:best-approx.12}
					\begin{aligned}
						\|F_h(\cdot,\nabla v_h)-F_h(\cdot,\nabla_{\!h}u_h^{\textit{cr}})\|_{2,\Omega}^2&
					 	\lesssim  \|F_h(\cdot,\nabla v_h)-F_h(\cdot,\nabla u)\|_{2,\Omega}^2+\mathrm{osc}_h^2(f,v_h) \\&\quad +\|\omega_p(h_{\mathcal{T}})^2\,(1+\vert \nabla u\vert^{p(\cdot)s}+(\vert \nabla v_h\vert+\vert \nabla_{\! h} e_h\vert)^{p_h(\cdot)s})\|_{1,\Omega}
       \,.
					\end{aligned}\hspace{-2mm}
				\end{align}	
				From \eqref{thm:best-approx.12}, in turn, we deduce that\enlargethispage{6mm}
				\begin{align}\label{thm:best-approx.13}
				 \begin{aligned}
				 \|F_h(\cdot,\nabla_{\!h}u_h^{\textit{cr}})-F_h(\cdot,\nabla u)\|_{2,\Omega}^2&\lesssim \|F_h(\cdot,\nabla v_h)-F_h(\cdot,\nabla_{\!h}u_h^{\textit{cr}})\|_{2,\Omega}^2\\&\quad+
				  \|F_h(\cdot,\nabla v_h)-F_h(\cdot,\nabla u)\|_{2,\Omega}^2\\&
				 \lesssim \|F_h(\cdot,\nabla v_h)-F_h(\cdot,\nabla u)\|_{2,\Omega}^2 +\mathrm{osc}_h^2(f,v_h)\\&\quad +\|\omega_p(h_{\mathcal{T}})^2\,(1+\vert \nabla u\vert^{p(\cdot)s}+(\vert \nabla v_h\vert+\vert \nabla_{\! h} e_h\vert)^{p_h(\cdot)s})\|_{1,\Omega} \,.
				 \end{aligned}\hspace{-5mm}
				\end{align}
				Taking in \eqref{thm:best-approx.13} the infimum with respect to $v_h\in \smash{\mathcal{S}^1_D(\mathcal{T}_h)}$, we conclude~the~claimed~estimate.
			\end{proof}
			
			\hspace*{-1mm}Adding \hspace*{-0.1mm}oscillation \hspace*{-0.1mm}terms \hspace*{-0.1mm}on \hspace*{-0.1mm}the \hspace*{-0.1mm}right-hand \hspace*{-0.1mm}side \hspace*{-0.1mm}measuring \hspace*{-0.1mm}the \hspace*{-0.1mm}regularity \hspace*{-0.1mm}of \hspace*{-0.1mm}${F(\cdot,\hspace*{-0.5mm}\nabla u)\hspace*{-0.15em}\in\hspace*{-0.15em} L^{p(\cdot)}(\Omega;\mathbb{R}^d)}$, it is possible to extend the best-approximation result in Theorem \ref{thm:best-approx2} to $\mathcal{S}^{1,cr}_D(\mathcal{T}_h)$.
			
			\begin{theorem}\label{thm:best-approx2} 
				Let $p\in C^0(\overline{\Omega})$ with $p^->1$ and $\delta\ge 0$ and let $f\in L^{p'(\cdot)}(\Omega)\cap \bigcap_{h\in (0,h_0]}{L^{p_h'(\cdot)}(\Omega)}$ for some $h_0\hspace{-0.15em}>\hspace{-0.15em}0$. \hspace{-0.2em}Then, there exists some $s\hspace{-0.15em}>\hspace{-0.15em}1$, which can chosen to be close~to~$1$~if~${h\hspace{-0.15em}>\hspace{-0.15em}0}$~is~close~to~$0$, such that if $u\in W^{1,p(\cdot)s}_D(\Omega)$, then for every $h\in (0,h_0]$, it holds that
				\begin{align*}
					\begin{aligned}
						\|F_h(\cdot,\nabla_{\!h} u_h^{\textit{cr}})-F_h(\cdot,\nabla u)\|_{2,\Omega}^2&\lesssim \inf_{v_h\in\mathcal{S}^{1,cr}_D(\mathcal{T}_h)}{\big[\|F_h(\cdot,\nabla_{\!h} v_h)-F_h(\cdot,\nabla u)\|_{2,\Omega}^2+\mathrm{osc}_h^2(f,v_h)}\\&\;\;+
						\sum_{T\in \mathcal{T}_h}{\|F(\cdot,\nabla u)-\langle F(\cdot,\nabla u)\rangle_{\omega_T}\|_{2,\omega_T}^2}
						\\&\;\;+\|\omega_p(h_{\mathcal{T}})^2\,(1+\vert \nabla u\vert^{p(\cdot)s}+(\vert \nabla_{\!h} v_h\vert+\vert \nabla_{\!h} v_h-\nabla_{\!h} u_h^{\textit{cr}}\vert)^{p_h(\cdot)s})\|_{1,\Omega}\big]\,,
					\end{aligned}  
				\end{align*}
				where the hidden constant  also depends on $s>1$ and the chunkiness $\omega_0>0$.\enlargethispage{6mm}
			\end{theorem}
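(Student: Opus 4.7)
The plan is to derive Theorem~\ref{thm:best-approx2} from Theorem~\ref{thm:best-approx} by taking, for every $v_h \in \mathcal{S}^{1,\textit{cr}}_D(\mathcal{T}_h)$, its node-averaging interpolant $\tilde v_h := \Pi_h^{\textit{av}} v_h \in \mathcal{S}^1_D(\mathcal{T}_h)$ as a conforming companion, and bounding the resulting non-conformity error $\|F_h(\cdot,\nabla \tilde v_h) - F_h(\cdot,\nabla_{\!h} v_h)\|_{2,\Omega}^2$ by the patch oscillation $\sum_{T \in \mathcal{T}_h} \|F(\cdot,\nabla u) - \langle F(\cdot,\nabla u)\rangle_{\omega_T}\|_{2,\omega_T}^2$. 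Theorem~\ref{thm:best-approx} applied with $\tilde v_h$ together with the triangle inequality for $\|\cdot\|_{2,\Omega}$ gives
\begin{align*}
\|F_h(\cdot,\nabla_{\!h} u_h^{\textit{cr}}) - F_h(\cdot,\nabla u)\|_{2,\Omega}^2 &\lesssim \|F_h(\cdot,\nabla \tilde v_h) - F_h(\cdot,\nabla_{\!h} v_h)\|_{2,\Omega}^2 + \|F_h(\cdot,\nabla_{\!h} v_h) - F_h(\cdot,\nabla u)\|_{2,\Omega}^2 \\
&\quad + \mathrm{osc}_h^2(f,\tilde v_h) + (h\text{-correction in }\tilde v_h),
\end{align*}
so it remains to bound the first summand and to rewrite the last two in terms of $v_h$.

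For the non-conformity term, \eqref{eq:hammera} yields element-wise
\begin{align*}
\|F_h(\cdot,\nabla \tilde v_h) - F_h(\cdot,\nabla_{\!h} v_h)\|_{2,T}^2 \sim \rho_{(\varphi_h)_{|\nabla_{\!h} v_h(T)|},T}(\nabla_{\!h}(v_h - \tilde v_h)).
\end{align*}
For any interior element patch $\omega_T$ (none of whose nodes belongs to $\Gamma_D$), $\Pi_h^{\textit{av}}$ reproduces affine functions, so for every $c \in \mathbb{R}^d$, setting $\ell(x) := c \cdot x$ and $w_h := v_h - \ell$, we have $\nabla_{\!h}(v_h - \tilde v_h) = \nabla_{\!h}(w_h - \Pi_h^{\textit{av}} w_h)$, whence by \eqref{eq:a6} with $m=1$,
\begin{align*}
\rho_{(\varphi_h)_{|\nabla_{\!h} v_h(T)|},T}(\nabla_{\!h}(v_h - \tilde v_h)) \lesssim \rho_{(\varphi_h)_{|\nabla_{\!h} v_h(T)|},\omega_T}(\nabla_{\!h} v_h - c) + (h\text{-perturbation}).
\end{align*}
Choosing $c \in \mathbb{R}^d$ such that $F(\xi_T,c) = \langle F(\cdot,\nabla u)\rangle_{\omega_T}$, applying the shift change (Lemma~\ref{lem:shift-change}) to pass between shifts $|\nabla_{\!h} v_h(T)|$ and $|\nabla u|$, and the patch-shift-to-element-shift (Lemma~\ref{lem:patch_to_element}) to re-equilibrate the Orlicz shifts across $\omega_T$, the right-hand side is controlled by $\|F_h(\cdot,\nabla_{\!h} v_h) - F_h(\cdot,\nabla u)\|_{2,\omega_T}^2 + \|F(\cdot,\nabla u) - \langle F(\cdot,\nabla u)\rangle_{\omega_T}\|_{2,\omega_T}^2$ up to the $h$-correction. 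Summing over $T\in \mathcal{T}_h$ with the finite overlap of patches produces the desired patch-oscillation bound. Boundary patches touching $\Gamma_D$, where $\Pi_h^{\textit{av}}$ fails to reproduce a general affine function, are handled analogously by choosing $\ell$ that vanishes at the Dirichlet nodes of $\omega_T$ and by exploiting that both $v_h$ and $\tilde v_h$ already vanish there, which together with a discrete Poincaré--Friedrichs inequality in the Orlicz setting yields the analogue of the above bound.

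Finally, the terms $\mathrm{osc}_h^2(f,\tilde v_h)$ and the $h$-correction in $\tilde v_h$ are rewritten in terms of $v_h$ by the shift change (Lemma~\ref{lem:shift-change}), which passes from shift $|\nabla \tilde v_h|$ to shift $|\nabla_{\!h} v_h|$ at the cost of a term absorbed in the already-controlled $\|F_h(\cdot,\nabla \tilde v_h) - F_h(\cdot,\nabla_{\!h} v_h)\|_{2,\Omega}^2$. Combining all estimates and taking the infimum over $v_h \in \mathcal{S}^{1,\textit{cr}}_D(\mathcal{T}_h)$ concludes the proof. The principal obstacle is the careful orchestration of the Orlicz shifts via Lemma~\ref{lem:shift-change} and Lemma~\ref{lem:patch_to_element} so that, after absorption, exactly the patch oscillation of $F(\cdot,\nabla u)$ emerges, together with the uniform handling of boundary patches where the affine reproduction of the node-averaging operator breaks down.
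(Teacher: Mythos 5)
Your overall route coincides with the paper's: apply Theorem \ref{thm:best-approx} with the conforming companion $\Pi_h^{\textit{av}}v_h$, bound the resulting non-conformity error by the patch oscillation of $F(\cdot,\nabla u)$, and convert the oscillation and quadrature terms back to $v_h$. The difference lies in the mechanism for the non-conformity estimate: the paper invokes the jump-based node-averaging error bound of \cite[Lem.\ 3.8]{K22CR} (combined with Lemma \ref{lem:Ax-Axh}), subtracts the constant $\langle F(\cdot,\nabla u)\rangle_{\omega_T}$ inside the interior jumps, and concludes with the discrete trace inequality, whereas you subtract an affine function, use affine reproduction of $\Pi_h^{\textit{av}}$ together with the Orlicz stability \eqref{eq:a6}, and then shift changes.

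The genuine gap is at the Dirichlet boundary, exactly the point you flag but do not resolve. For patches touching $\Gamma_D$, $\Pi_h^{\textit{av}}$ sets nodal values to zero and hence does not reproduce $\ell$; an affine function with prescribed non-zero gradient $c$ cannot in general vanish at all Dirichlet vertices of a patch; and the claim that ``both $v_h$ and $\tilde v_h$ already vanish there'' is incorrect, since $v_h\in \mathcal{S}^{1,\textit{cr}}_D(\mathcal{T}_h)$ vanishes only at the barycenters of Dirichlet sides while $\Pi_h^{\textit{av}}v_h$ vanishes at the Dirichlet vertices; in addition, \eqref{eq:a6} is stated for elements of $\mathcal{S}^{1,\textit{cr}}_D(\mathcal{T}_h)$, and $w_h=v_h-\ell$ does not belong to that space. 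Near $\Gamma_D$ the non-conformity error cannot be controlled by $\nabla_{\!h}v_h$ and its interior jumps alone; one must compare with $u$, which vanishes on $\Gamma_D$ --- this is precisely why the estimate used in the paper carries the extra term $\|F_h(\cdot,\nabla_{\!h}v_h)-F_h(\cdot,\nabla u)\|_{2,\omega_T}^2$ --- and your sketch supplies no substitute (a discrete Poincar\'e--Friedrichs inequality applied to $w_h$ is unavailable, as $w_h$ has no zero boundary data). Two smaller inaccuracies: the quadrature term involving $(\vert \nabla\Pi_h^{\textit{av}}v_h\vert+\vert \nabla\Pi_h^{\textit{av}}v_h-\nabla_{\!h}u_h^{\textit{cr}}\vert)^{p_h(\cdot)s}$ is a plain modular, not a shifted one, so it is handled by the $L^{p_h(\cdot)s}$-modular stability of $\Pi_h^{\textit{av}}$ (again \eqref{eq:a6}) rather than by the shift change; and after \eqref{eq:a6} you still carry the element shift $\vert\nabla_{\!h}v_h(T)\vert$ over the whole patch, so the jump terms from Lemma \ref{lem:patch_to_element} reappear and must themselves be bounded by subtracting the patch mean and applying the discrete trace inequality --- that is, your route ultimately needs the paper's key step anyway.
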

			
			\begin{proof}
				Using Theorem \ref{thm:best-approx2}, for every $v_h\in\mathcal{S}^{1,cr}_D(\mathcal{T}_h)$,  we find that
				\begin{align}\label{thm:best-approx2.1} 
					\begin{aligned}
					\|F_h(\cdot,\nabla_{\!h} u_h^{\textit{cr}})-F_h(\cdot,\nabla u)\|_{2,\Omega}^2&\lesssim \|F_h(\cdot,\nabla \Pi_h^{\textit{av}}v_h)-F_h(\cdot,\nabla u)\|_{2,\Omega}^2+\mathrm{osc}_h^2(f,\Pi_h^{\textit{av}}v_h)
					\\&\quad+\|\omega_p(h_{\mathcal{T}})^2\,(1+\vert \nabla u\vert^{p(\cdot)s})\|_{1,\Omega}\\&\quad+\|\omega_p(h_{\mathcal{T}})^2(\vert \nabla_{\!h} \Pi_h^{\textit{av}} v_h\vert+\vert \nabla_{\!h} \Pi_h^{\textit{av}} v_h-\nabla_{\!h} u_h^{\textit{cr}}\vert)^{p_h(\cdot)s})\|_{1,\Omega}
					\\&\eqqcolon I_h^1+I_h^2+I_h^3+I_h^4\,,
				\end{aligned}
				\end{align}
				so it is left to estimate $I_h^1$, $I_h^2$, and $I_h^4$:
				
			\textit{ad $I_h^1$.} Using \cite[Lem.\  3.8]{K22CR} and Lemma \ref{lem:A-Ah}\eqref{eq:Fh-F} (or Lemma \ref{lem:Ax-Axh}\eqref{eq:Fxh-Fx}), for every $T\in \mathcal{T}_h$, we find that
			\begin{align}\label{thm:best-approx2.2} 
				\begin{aligned}
				 \|F_h(\cdot,\nabla_{\! h} v_h)-F_h(\cdot,\nabla\Pi_h^{\textit{av}}v_h)\|_{2,T}^2&\lesssim \|F(\xi_T,\nabla_{\! h} v_h)-F(\xi_T,\nabla u)\|_{2,\omega_T}^2\\&\quad+\| h_{\mathcal{S}}^{1/2}\jump{F(\xi_T,\nabla_{\!h}v_h)}\|_{2,\mathcal{S}_h^{i}(T)}^2
				 \\&\lesssim\|F_h(\cdot,\nabla_{\! h} v_h)-F_h(\cdot,\nabla u)\|_{2,\omega_T}^2\\&\quad+\| h_{\mathcal{S}}^{1/2}\jump{F_h(\cdot,\nabla_{\!h}v_h)-\langle F(\cdot,\nabla u )\rangle_{\omega_T}}\|_{2,\mathcal{S}_h^{i}(T)}^2\\&\quad+\|\omega_{p,\omega_T}(h_{\mathcal{T}})^2\,(1+\vert \nabla u\vert^{p(\cdot)s}+\vert \nabla_{\!h} v_h\vert^{p_h(\cdot)s})\|_{1,\omega_T}\,, 
				 	\end{aligned}
			\end{align}
			where, due to the discrete trace inequality (\textit{cf.}\ \cite[Lem.\ 12.8]{EG21}), it holds that
			\begin{align}\label{thm:best-approx2.3}
				\begin{aligned}
				\| F_h(\cdot,\nabla_{\!h}v_h)-\langle F(\cdot,\nabla u )\rangle_{\omega_T}\|_{2,\mathcal{S}_h^{i}(T)}^2&\lesssim \| F_h(\cdot,\nabla_{\!h}v_h)-\langle F(\cdot,\nabla u )\rangle_{\omega_T}\|_{2,\omega_T}^2
				\\&\lesssim \| F_h(\cdot,\nabla_{\!h}v_h)-F(\cdot,\nabla u )\|_{2,\omega_T}^2\\&\quad + \| F(\cdot,\nabla u )-\langle F(\cdot,\nabla u )\rangle_{\omega_T}\|_{2,\omega_T}^2\,.
			\end{aligned} 
			\end{align}
			Therefore, using \eqref{thm:best-approx2.3} in \eqref{thm:best-approx2.2} and  subsequent summation with respect to $T\in \mathcal{T}_h$ yield that
			\begin{align}\label{thm:best-approx2.4}
			\begin{aligned}
				I_h^1&\lesssim \|F_h(\cdot,\nabla_{\! h} v_h)-F_h(\cdot,\nabla u)\|_{2,\Omega}^2
				+\sum_{T\in \mathcal{T}_h}{\|F(\cdot,\nabla u)-\langle F(\cdot,\nabla u)\rangle_{\omega_T}\|_{2,\omega_T}^2}
				\\&\quad+ \|\omega_p(h_{\mathcal{T}})^2\,(1+\vert \nabla u\vert^{p(\cdot)s}+\vert \nabla_{\!h} v_h\vert^{p_h(\cdot)s})\|_{1,\Omega}\,.
					\end{aligned} 
			\end{align}
			
			\textit{ad $I_h^2$.} Using the shift-change \eqref{lem:shift-change.3}, we find that
			\begin{align}\label{thm:best-approx2.5}
					I_h^2\lesssim \mathrm{osc}_h^2(f,v_h)+ \|F_h(\cdot,\nabla_{\! h} v_h)-F_h(\cdot,\nabla u)\|_{2,\Omega}^2\,.
			\end{align}
			
			\textit{ad $I_h^4$.} Using stability properties of $\Pi_h^{av}\colon \mathcal{S}^{1,cr}_D(\mathcal{T}_h)\to \mathcal{S}^1_D(\mathcal{T}_h)$ (\textit{cf.}\ \eqref{eq:a6} with $\psi= \vert \cdot\vert^{p'(\xi_T)s}$ and $a=0$)  and Lemma \ref{lem:A-Ah}\eqref{eq:Fh-F} (or Lemma \ref{lem:Ax-Axh}\eqref{eq:Fxh-Fx}), for every $T\in \mathcal{T}_h$, we find that
			\begin{align}\label{thm:best-approx2.6}
				\rho_{p'(\xi_T)s,T}(\nabla_{\!h}\Pi_h^{\textit{av}}v_h)&\lesssim \rho_{p'_h(\cdot)s,\omega_T}(\nabla_{\!h}v_h)+\|\omega_{p,\omega_T}(h_{\mathcal{T}})^2\,(1+\vert\nabla_{\!h}v_h\vert^{p'_h(\cdot)s})\|_{1,\omega_T}\,.
			\end{align}
			Therefore, summation of \eqref{thm:best-approx2.6} with respect to   $T\in \mathcal{T}_h$ yields that
			\begin{align}\label{thm:best-approx2.7}
				I_h^4\lesssim \|\omega_{p,\omega_T}(h_{\mathcal{T}})^2\,(1+\vert\nabla_{\!h}v_h\vert^{p'_h(\cdot)s})\|_{1,\omega_T}\,.
			\end{align}
			Combining \eqref{thm:best-approx2.2}, \eqref{thm:best-approx2.5}, and \eqref{thm:best-approx2.7} in \eqref{thm:best-approx2.1}, we arrive at the claimed best-approximation~result.
			\end{proof}

		\newpage
 \section{\textit{A priori} error analysis}\label{sec:a_priori}
	
		\qquad In this section, we establish \textit{a priori} error estimates for the $S^{1,\textit{\textrm{cr}}}_D(\mathcal{T}_h)$-approximation \eqref{eq:pDirichletS1crD} of the $p(\cdot)$-Dirichlet problem \eqref{eq:pDirichletW1p}. To this end, we resort to the \textit{medius} error analysis of Section~\ref{sec:medius}, which allows us to tranfer the approximation rate capabilities of the $S^1_D(\mathcal{T}_h)$-approximation \eqref{eq:pDirichletS1D} (\textit{cf.}\ Theorem \ref{P1_apriori}) to the $S^{1,\textit{\textrm{cr}}}_D(\mathcal{T}_h)$-approximation \eqref{eq:pDirichletS1crD}.
 
		\begin{theorem}\label{thm:rate_u}
		  Let $p\in C^{0,\alpha}(\overline{\Omega})$ with $\alpha\in (0,1]$ and $p^->1$ and let $\delta\ge 0$.~Moreover,~let~$F(\cdot,\nabla u)\in W^{1,2}(\Omega;\mathbb{R}^d)$, $(\delta^{p(\cdot)-1}+\vert z\vert )^{p'(\cdot)-2}\vert f\vert^2\in L^1(\{p> 2\})$, and 
 $ f\in L^{(p^-)'}(\Omega)$ or $\Gamma_D=\partial \Omega$.
		 Then, assuming that $h_{\max}\sim h_{\mathcal{T}}$, there exists some $s\hspace{-0.1em}>\hspace{-0.1em}1$, which can chosen to be close to $1$ if $h_{\max}>0$ is close to $0$, such that if~$f\hspace{-0.1em}\in\hspace{-0.1em} L^{p'(\cdot)s}(\Omega)$, then
		 \begin{align*}
		 \|F_h(\cdot,\nabla_{\!h} u_h^{\textit{cr}})-F_h(\cdot,\nabla u)\|_{2,\Omega}^2&\lesssim h_{\max}^{2\alpha}\,\big(1+\|\nabla F(\cdot,\nabla u)\|_{2,\Omega}^{2}+\|(\delta^{p(\cdot)-1}+\vert z\vert)^{p'(\cdot)-2}\vert f\vert^2\|_{1,\{p>2\}}\\&\quad+
  \sigma(f,s)+\rho_{p'(\cdot)s,\Omega}(f)+\rho_{p(\cdot)s,\Omega}( \nabla u)\big)^s\,,
		 \end{align*}
        where the hidden constants also depend on $s>1$ and $\sigma(f,s)\coloneqq 1+\rho_{(p^-)',\Omega}(f)$ if $f\in L^{(p^-)'}(\Omega)$ and $\sigma(f,s)\coloneqq 1+\rho_{p'(\cdot)s,\Omega}(f)^{\smash{(p^-)'/(p^+)'}}$ if $\Gamma_D=\partial \Omega$.
		\end{theorem}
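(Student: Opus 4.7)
The plan is to apply the \textit{medius} error estimate (Theorem \ref{thm:best-approx}) with the Scott--Zhang quasi-interpolant $v_h \coloneqq \Pi_h^{\textit{sz}} u \in \mathcal{S}^1_D(\mathcal{T}_h)$ and then to bound each of the three resulting right-hand side contributions under the stated Hölder regularity $p \in C^{0,\alpha}(\overline{\Omega})$, which gives $\omega_p(h_{\mathcal{T}})^2 \lesssim h_{\max}^{2\alpha}$. Remark \ref{rem:ps} provides $\nabla u \in L^{p(\cdot)s}(\Omega; \mathbb{R}^d)$ and $z \in L^{p'(\cdot)s}(\Omega; \mathbb{R}^d)$ for some $s > 1$ that can be chosen close to $1$ when $h_{\max}$ is small, so these quantities are available on the right-hand side of the \textit{medius} estimate.

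For the best-approximation contribution $\|F_h(\cdot, \nabla v_h) - F_h(\cdot, \nabla u)\|_{2, \Omega}^2$, I would reuse the Scott--Zhang-based argument behind Theorem \ref{P1_apriori} to obtain a bound by $h_{\max}^2 \|\nabla F(\cdot, \nabla u)\|_{2, \Omega}^2 + h_{\max}^{2\alpha} (1 + \rho_{p(\cdot) s, \Omega}(\nabla u))$. For the oscillation term $\mathrm{osc}_h^2(f, v_h) = \rho_{((\varphi_h)_{|\nabla v_h|})^*, \Omega}(h_{\mathcal{T}} (f - f_h))$, I would split the integration region into $\{p \leq 2\}$ and $\{p > 2\}$: on the former, $((\varphi_h)_{|\nabla v_h|})^*(x, t)$ is (after a shift change via Lemma~\ref{lem:shift-change} and the $\varepsilon$-Young inequality \eqref{ineq:young}) dominated by $|t|^{p_h'(\cdot)}$ plus small shifted contributions, so $f \in L^{p'(\cdot) s}(\Omega)$ and stability of $\Pi_h$ in $L^{p'(\cdot) s}$ yield an $h_{\max}^{2}$-contribution times $\rho_{p'(\cdot) s, \Omega}(f)$; on the latter, since $(\varphi^*)_{|\AAA(x, a)|} \sim (\varphi_{|a|})^*$ by Lemma~\ref{lem:hammer}, the quadratic regime of the dual shifted $N$-function gives $((\varphi_h)_{|\nabla v_h|})^*(x, h_{\mathcal{T}} |f - f_h|) \lesssim h_{\max}^2 (\delta^{p(x)-1} + |z|)^{p'(x) - 2} |f|^2$, which is integrable by assumption.

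The remaining and most delicate step is to control the integrand $(|\nabla v_h| + |\nabla v_h - \nabla_h u_h^{\textit{cr}}|)^{p_h(\cdot) s}$ appearing in the $\omega_p(h_{\mathcal{T}})^2$-term, since it involves the discrete solution itself. I would proceed by establishing, uniformly in $h \in (0, h_0]$, an \textit{a priori} stability estimate of the form $\rho_{p_h(\cdot) s, \Omega}(\nabla_h u_h^{\textit{cr}}) \lesssim 1 + \sigma(f, s) + \rho_{p'(\cdot) s, \Omega}(f) + \rho_{p(\cdot) s, \Omega}(\nabla u)$. The baseline $L^{p_h(\cdot)}$-bound $\rho_{\varphi_h, \Omega}(\nabla_h u_h^{\textit{cr}}) \lesssim \rho_{\varphi_h^*, \Omega}(f_h) + \text{l.o.t.}$ follows by testing \eqref{eq:pDirichletS1crD} with $u_h^{\textit{cr}}$ and applying \eqref{ineq:young} together with a discrete Poincaré inequality (available when $\Gamma_D = \partial \Omega$, which supplies the second branch of $\sigma$) or with variable-exponent Hölder and $f \in L^{(p^-)'}(\Omega)$ (the first branch). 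A Gehring-type higher-integrability argument, in the spirit of the one used behind Theorem \ref{P1_best-approx}, then upgrades this from exponent $p_h(\cdot)$ to $p_h(\cdot) s$ for some $s > 1$ close to $1$ when $h_{\max}$ is small; this is where the outer power $s$ on the right-hand side of the claim enters. Collecting all estimates in Theorem~\ref{thm:best-approx} and using $\omega_p(h_{\mathcal{T}}) \lesssim h_{\max}^{\alpha}$ together with $h_{\max}^2 \leq h_{\max}^{2\alpha}$ (since $\alpha \leq 1$) then yields the asserted bound.
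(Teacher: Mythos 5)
Your overall skeleton matches the paper's: apply the \textit{medius} estimate (Theorem \ref{thm:best-approx}) with $v_h=\Pi_h^{\textit{sz}}u$, use the Scott--Zhang bounds behind Theorem \ref{P1_apriori} for the best-approximation part, shift the oscillation term to the shift $\vert\nabla u\vert$ and split $\{p\le 2\}$/$\{p>2\}$, using Lemma \ref{lem:hammer}\eqref{eq:hammerg} to produce the $(\delta^{p(\cdot)-1}+\vert z\vert)^{p'(\cdot)-2}\vert f\vert^2$ term; up to minor bookkeeping (the passage from $\varphi_h$-shifted to $\varphi$-shifted conjugates via Lemma \ref{lem:A-Ah}\eqref{eq:phih-phi}) this is exactly what the paper does.

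The genuine gap is your treatment of $\rho_{p_h(\cdot)s,\Omega}(\nabla_{\!h}u_h^{\textit{cr}})$. You assert a uniform-in-$h$ upgrade of the baseline bound of Lemma \ref{lem:a_priori} from exponent $p_h(\cdot)$ to $p_h(\cdot)s$ by ``a Gehring-type higher-integrability argument, in the spirit of the one used behind Theorem \ref{P1_best-approx}.'' That argument (from \cite{BDS15}) is a discrete reverse H\"older estimate for the \emph{conforming} $\mathcal{S}^1_D(\mathcal{T}_h)$ method, where a discrete Caccioppoli inequality is obtained by testing with cut-off multiples of the discrete solution; for the nonconforming Crouzeix--Raviart space such test functions are not admissible, jump contributions appear, and no discrete Gehring lemma for $u_h^{\textit{cr}}$ is proved or cited in this paper. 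As stated, this step is unsupported, and it is precisely the difficulty the paper avoids: the paper instead uses the global inverse estimate (this is where the hypothesis $h_{\max}\sim h_{\mathcal{T}}$ enters, which your route never uses) to get $\rho_{p_h(\cdot)s,\Omega}(\nabla_{\!h}u_h^{\textit{cr}})\lesssim h_{\max}^{d(1-s)}\sigma(f,s)^s$ as in \eqref{cor:rate_improve.10}, yielding a \emph{preliminary} error estimate with the reduced rate $h_{\max}^{2\alpha+d(1-s)}$ in \eqref{cor:rate_improve.11}, and then bootstraps: the preliminary estimate controls $\rho_{p_h(\cdot),\Omega}(\nabla_{\!h}u_h^{\textit{cr}}-\nabla\Pi_h^{\textit{sz}}u)$ (with the case distinction $p_h\ge 2$ in \eqref{cor:improved_stability.3} and $p_h<2$ in \eqref{cor:improved_stability.4}), which, re-inserted in \eqref{cor:rate_improve.12}, removes the $h_{\max}^{d(1-s)}$ loss for $s$ close to $1$ and restores the full rate $h_{\max}^{2\alpha}$; this bootstrap is also the source of the outer power $s$ and of $\sigma(f,s)$ in the final bound. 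To repair your proof you would either have to supply a CR-version of the discrete Gehring lemma (a nontrivial new result) or replace that step by the inverse-estimate-plus-bootstrap argument of the paper.
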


 \begin{remark}[Comments on the regularity assumptions in Theorem \ref{thm:rate_u}]\hphantom{                   }
 \begin{itemize}[noitemsep,topsep=2pt,labelwidth=\widthof{(iii)},leftmargin=!]
    \item[(i)] If $p\in C^{0,\alpha}(\overline{\Omega})$ with $\alpha\in (0,1)$, one cannot expect that $F(\cdot,\nabla u)\in W^{1,2}(\Omega;\mathbb{R}^d)$, even locally.
    However, appealing to \cite[Rem.\  4.5]{BDS15}, one can expect that $F(\cdot,\nabla u)\in \mathcal{N}^{\alpha,2}(\Omega)^d$, where $\mathcal{N}^{\alpha,2}(\Omega)$ is the Nikolski\u{\i} space with order of differentiability $\alpha\in (0,1)$, which should still be enough the justify the arguments below, but is beyond the scope of this article.
  \item[(ii)] If $p\leq 2$ in $\overline{\Omega}$ in Theorem \ref{thm:rate_u}, then the assumption  $(\delta^{p(\cdot)-1}+\vert z\vert )^{p'(\cdot)-2}\vert f\vert^2\in L^1(\{p> 2\})$ is trivially satisfied.

  \item[(iii)]  
  If $(\delta^{p(\cdot)-1}+\vert z\vert )^{p'(\cdot)-2}\vert \nabla z\vert^2\hspace{-0.1em}\in L^1(\{p\hspace{-0.1em}>\hspace{-0.1em} 2\})$ in Theorem \ref{thm:rate_u}, then, due to $f\hspace{-0.1em}=\hspace{-0.1em}-\textup{div}\,z$~in~$L^{p'(\cdot)}(\Omega)$, it holds that   $(\delta^{p(\cdot)-1}+\vert z\vert )^{p'(\cdot)-2}\vert f\vert^2\in L^1(\{p> 2\})$.

  \item[(iv)] If $f\in L^2(\{p> 2\})$ and $\delta>0$, then  $(\delta^{p(\cdot)-1}+\vert z\vert )^{p'(\cdot)-2}\vert f\vert^2\leq \delta^{2-p(\cdot)}\vert f\vert^2$ a.e.\ in $\{p> 2\}$, \textit{i.e.}, it holds that $(\delta^{p(\cdot)-1}+\vert z\vert )^{p'(\cdot)-2}\vert f\vert^2\in L^1(\{p> 2\})$.

  \item[(v)] If $p \in  (1,\infty)$, then it holds that $F(\nabla u)\in W^{1,2}(\Omega;\mathbb{R}^d)$ if and~only~if~$F^*(z)\in\ W^{1,2}(\Omega;\mathbb{R}^d)$ with $\vert F(\nabla u)\vert \sim \vert F^*(z)\vert$ a.e.\ in $\Omega$ and $\vert \nabla F(\nabla u)\vert \sim  \vert \nabla F^*(z)\vert$~a.e.~in~$\Omega$ (\textit{cf}.\ \cite[Lem.\  2.3]{DKRI14}). In addition, due to \cite[Lem.\  2.11]{K22CR}, it holds that $\vert \nabla F^*(z)\vert\sim (\delta^{p-1}+\vert z\vert )^{(p'-2)/2}\vert \nabla z\vert$~a.e.~in~$\Omega$.
  Thus, Theorem \ref{thm:rate_u} extends the \textit{a priori} error analysis in \cite{K22CR}.
 \end{itemize}
 
 \end{remark}

 Since the right-hand side in Theorem \ref{thm:best-approx} still involves the discrete primal solution, before proving Theorem \ref{thm:rate_u}, we first derive the following \textit{a priori} estimate.

 \begin{lemma}\label{lem:a_priori}
     Let the assumptions of Theorem  \ref{thm:rate_u} be satisfied. Then, there exists  some $s>1$, which can chosen to be close to $1$ if $h_{\max}>0$ is close to $0$, such that if $f\in L^{p'(\cdot)s}(\Omega)$, then 
     \begin{align*}
         \rho_{\varphi_h,\Omega}(\nabla_{\! h}u_h^{\textit{\textrm{cr}}})\lesssim  \sigma(f,s)\,,
     \end{align*}
      where the hidden constants also depend on $s>1$ and $\sigma(f,s)$ is defined as in Theorem  \ref{thm:rate_u}.
 \end{lemma}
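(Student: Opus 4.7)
The plan is to exploit the minimality of $u_h^{\textit{cr}} \in \mathcal{S}^{1,\textit{cr}}_D(\mathcal{T}_h)$ for the discrete $p_h(\cdot)$-Dirichlet energy $I_h^{\textit{cr}}$. Since $0 \in \mathcal{S}^{1,\textit{cr}}_D(\mathcal{T}_h)$ and $I_h^{\textit{cr}}(0) = 0$, we immediately obtain
\begin{align*}
\rho_{\varphi_h,\Omega}(\nabla_{\!h} u_h^{\textit{cr}}) \;\le\; (f_h, \Pi_h u_h^{\textit{cr}})_\Omega \;=\; (f, \Pi_h u_h^{\textit{cr}})_\Omega,
\end{align*}
where the last equality uses that $f - f_h \perp \mathcal{L}^0(\mathcal{T}_h)$ in $L^2(\Omega)$ and $\Pi_h u_h^{\textit{cr}} \in \mathcal{L}^0(\mathcal{T}_h)$. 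The remainder of the argument amounts to estimating this duality pairing in terms of $f$ and absorbing the resulting control on $\nabla_{\!h} u_h^{\textit{cr}}$ back into the modular on the left.

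In the case $f \in L^{(p^-)'}(\Omega)$, I would apply Hölder's inequality with the constant exponents $p^-$ and $(p^-)'$, use the $L^{p^-}$-stability of $\Pi_h$, and invoke the classical discrete Poincaré--Friedrichs inequality for the Crouzeix--Raviart space (whose constant is $h$-independent because $|\Gamma_D| > 0$), giving $\|\Pi_h u_h^{\textit{cr}}\|_{p^-,\Omega} \lesssim \|\nabla_{\!h} u_h^{\textit{cr}}\|_{p^-,\Omega}$. Since $p_h(\cdot) \ge p^- > 1$, the growth of $\varphi_h$ delivers a uniform bound $\|\nabla_{\!h} u_h^{\textit{cr}}\|_{p^-,\Omega}^{p^-} \lesssim 1 + \rho_{\varphi_h,\Omega}(\nabla_{\!h} u_h^{\textit{cr}})$, with the hidden constant depending only on $p^\pm$ and $\delta$. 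The scalar $\varepsilon$-Young inequality with exponent $p^-$ then permits absorption, yielding $\rho_{\varphi_h,\Omega}(\nabla_{\!h} u_h^{\textit{cr}}) \lesssim 1 + \rho_{(p^-)',\Omega}(f) = \sigma(f,s)$.

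In the case $\Gamma_D = \partial\Omega$, I would replace the constant-exponent Hölder inequality by its variable-exponent counterpart, paired with the pointwise conjugate $q(\cdot) = p(\cdot)s/(p(\cdot)(s-1)+1)$. A short computation shows $q(x) \le p(x)$ pointwise, so that (together with the $h$-independent variable-exponent Poincaré inequality on $\mathcal{S}^{1,\textit{cr}}_D(\mathcal{T}_h)$ available for $\Gamma_D = \partial\Omega$ and $p \in C^0(\overline{\Omega})$) one can control $\rho_{q(\cdot),\Omega}(\Pi_h u_h^{\textit{cr}}) \lesssim 1 + \rho_{p(\cdot),\Omega}(\nabla_{\!h} u_h^{\textit{cr}}) \lesssim 1 + \rho_{\varphi_h,\Omega}(\nabla_{\!h} u_h^{\textit{cr}})$. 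The Luxembourg-norm/modular equivalence for $L^{p'(\cdot)s}(\Omega)$ with the limit exponents $(p^+)'s$ and $(p^-)'s$ introduces the power $1/(p^+)'s$; combining these pieces with $\varepsilon$-Young and absorbing yields the stated $\sigma(f,s) = 1 + \rho_{p'(\cdot)s,\Omega}(f)^{(p^-)'/(p^+)'}$.

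The main obstacle is the careful bookkeeping of the variable-exponent modular/norm equivalences, whose distinct behaviour below and above $1$ is what produces the non-trivial exponent $(p^-)'/(p^+)'$ in $\sigma(f,s)$; the discrete Poincaré inequalities and the $\Delta_2$-coercivity estimate for $\varphi_h$ are otherwise standard and apply uniformly in $h > 0$ thanks to Remark~\ref{rem:uniform}.
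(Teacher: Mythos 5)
Your first case ($f\in L^{(p^-)'}(\Omega)$) coincides with the paper's argument: minimality $I_h^{\textit{cr}}(u_h^{\textit{cr}})\le I_h^{\textit{cr}}(0)$, constant-exponent Young/Hölder with $p^-$, the $L^{p^-}$-stability of $\Pi_h$, the classical discrete Poincaré inequality for Crouzeix--Raviart functions, and absorption via $t^{p_h}\lesssim 1+\varphi_h(\cdot,t)$. That part is fine.

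The second case ($\Gamma_D=\partial\Omega$) contains a genuine gap. Your argument hinges on an ``$h$-independent variable-exponent Poincaré inequality on $\mathcal{S}^{1,\textit{cr}}_D(\mathcal{T}_h)$'' in modular form, i.e.\ $\rho_{q(\cdot),\Omega}(\Pi_h u_h^{\textit{cr}})\lesssim 1+\rho_{p(\cdot),\Omega}(\nabla_{\!h}u_h^{\textit{cr}})$. No such result is cited or established, and it is not standard: for variable exponents even in the conforming setting Poincaré inequalities are naturally norm statements (requiring log-Hölder continuity), and converting them to modular bounds is exactly the delicate bookkeeping you defer; for the \emph{non-conforming} Crouzeix--Raviart space one would additionally have to control the jump contributions or pass through an enrichment operator such as $\Pi_h^{\textit{av}}$, which produces precisely the shifted error terms with $\omega_p(h_T)$ weights (cf.\ \eqref{eq:a6}) that your sketch does not treat. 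The paper avoids this entirely by a dual lifting: using the variable-exponent Calderón--Zygmund theory for the Laplacian (\cite[Thm.\ 14.1.2]{DHHR11}) it constructs $G\in W^{1,p'(\cdot)s}(\Omega;\mathbb{R}^d)$ with $\textup{div}\,G=f$ and $\|G\|_{p'(\cdot)s,\Omega}+\|\nabla G\|_{p'(\cdot)s,\Omega}\lesssim\|f\|_{p'(\cdot)s,\Omega}$; the norm-to-modular conversion \cite[Lem.\ 3.2.5]{DHHR11} applied to $G$ is what generates the exponent $(p^-)'/(p^+)'$ in $\sigma(f,s)$. Then $\textup{div}\,\Pi_h^{\textit{rt}}G=f_h$ for the Raviart--Thomas interpolant, and the discrete integration-by-parts formula \eqref{eq:pi0} turns $(f_h,\Pi_h u_h^{\textit{cr}})_\Omega$ into $-(\Pi_h\Pi_h^{\textit{rt}}G,\nabla_{\!h}u_h^{\textit{cr}})_\Omega$, so that only the gradient modular of $u_h^{\textit{cr}}$ appears and can be absorbed by element-wise Young inequalities with exponent $p(\xi_T)$ together with the $L^{p'(\xi_T)}$-stability of $\Pi_h^{\textit{rt}}$. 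Unless you can actually prove the discrete modular Poincaré inequality you invoke (with constants uniform in $h$ and with the correct power of the modular), your route does not close; the divergence-lifting argument is the mechanism the lemma relies on.
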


\begin{proof}
    We distinguish the cases $f\in L^{(p^-)'}(\Omega)$ and $\Gamma_D=\partial \Omega$:

    \textit{Case $f\hspace{-0.15em}\in\hspace{-0.15em} L^{(p^-)'}(\Omega)$.}
    Since $I_h^{\textit{\textrm{cr}}}(u_h^{\textit{\textrm{cr}}})\hspace{-0.15em}\leq\hspace{-0.15em} I_h^{\textit{\textrm{cr}}}(0)$, applying  the $\varepsilon$-Young inequality \eqref{ineq:young}~with~${\psi\hspace{-0.15em}=\hspace{-0.15em}\vert\hspace{-0.15em} \cdot\hspace{-0.15em}\vert^{p^-}}\!\!$,  the 
    $L^{p^-}$-stability of $\Pi_h$, the discrete Poincar\'e inequality in $\mathcal{S}^{1,\textit{\textrm{cr}}}_D(\mathcal{T}_h)$ (\textit{cf.} \cite[Prop.\ I.4.13]{Tem77}), and $p^-\lesssim p_h$ a.e.\ in $\Omega$,
    we find that
    \begin{align*}
        \rho_{\varphi_h,\Omega}(\nabla_{\! h}u_h^{\textit{\textrm{cr}}})&\lesssim (f_h,\Pi_h  u_h^{\textit{\textrm{cr}}})_{\Omega}
        \\&\lesssim c_\varepsilon\,\rho_{(p^-)',\Omega}(f_h)+\varepsilon\,
        \rho_{p^-,\Omega}(\nabla_{\! h} u_h^{\textit{\textrm{cr}}})
        \\&\lesssim c_\varepsilon\,\rho_{(p^-)',\Omega}(f)+\varepsilon\,
        (1+\rho_{p_h(\cdot),\Omega}(\nabla_{\! h} u_h^{\textit{\textrm{cr}}}))
        \,.
    \end{align*}
    For $\varepsilon>0$ sufficiently small, using $p_h\lesssim 1+\varphi_h$, we conclude that $\rho_{\varphi_h,\Omega}(\nabla_{\! h}u_h^{\textit{\textrm{cr}}})\lesssim 1+\rho_{(p^-)',\Omega}(f)$. 

    \textit{Case $\Gamma_D=\partial\Omega$.} Let $R>0$ be such that $\Omega\subseteq B_R^d(0)$. If $f\in L^{p'(\cdot)s}(\Omega)$,
    by \cite[Thm.\  14.1.2]{DHHR11},  the vector field $G \coloneqq (-\nabla u)|_{\Omega} $, where $u\in W^{2,p(\cdot)}(B_R^d(0))\cap W^{1,p(\cdot)}_0(B_R^d(0))$ denotes the unique solution of  $-\Delta u =\overline{f}$ a.e.\ in $B_R^d(0)$, where $\overline{f}\coloneqq f$ a.e.\ in $\Omega$ and $\overline{f}\coloneqq 0$ a.e.\ in $B_R^d(0)\setminus\Omega$, satisfies 
     $G\in W^{1,p'(\cdot)s}(\Omega;\mathbb{R}^d)$, $\textup{div}\,G=f$ a.e.\ in $\Omega$,  and 
    $\|G\|_{p'(\cdot)s,\Omega}+\|\nabla G\|_{p'(\cdot)s,\Omega}\lesssim \|f\|_{p'(\cdot)s,\Omega}$, with a constant depending~on~$s$,~$p$, and $\Omega$. Using \cite[Lem.\  3.2.5]{DHHR11}, we find that
    \begin{align}
        \rho_{p'(\cdot)s,\Omega}(G)+\rho_{p'(\cdot)s,\Omega}(\nabla G)\lesssim \rho_{p'(\cdot)s,\Omega}(f)^{\smash{(p^-)'/(p^+)'}}\,.\label{lem:a_priori.1}
    \end{align}    
    Denote by $\Pi_h^{\textit{\textrm{rt}}}\colon W^{1,1}(\Omega;\mathbb{R}^d)\to \mathcal{R}T^0(\mathcal{T}_h)$, the Raviart--Thomas quasi-iterpolation operator. Then, $\textup{div}\,\Pi_h^{\textit{\textrm{rt}}} G=\Pi_h \textup{div}\, G=f_h$ a.e.\ in $\Omega$. Appealing to  \cite[Thm.\ 16.4]{EG21},~for~every~$T\in \mathcal{T}_h$, we have that
    \begin{align}
        \rho_{p'(\xi_T),T}(\Pi_h^{\textit{\textrm{rt}}}G)\lesssim \rho_{p'(\xi_T),T}(G)+\rho_{p'(\xi_T),T}(\nabla G)\,.\label{lem:a_priori.2}
    \end{align}
    Since $\Pi_h^{\textit{\textrm{cr}}}(u_h^{\textit{\textrm{cr}}})\leq I_h^{\textit{\textrm{cr}}}(0)$, using $\textup{div}\,\Pi_h^{\textit{\textrm{rt}}} G=f_h$  a.e.\ in $\Omega$, the discrete integration-by-parts formula \eqref{eq:pi0}, for every $T\in \mathcal{T}_h$ the $\varepsilon$-Young inequality \eqref{ineq:young} with $\psi=\vert \cdot\vert^{p(\xi_T)}$, \eqref{lem:a_priori.2},~and~\eqref{lem:a_priori.1},~we~find~that
    \begin{align*}
        \rho_{\varphi_h,\Omega}(\nabla_{\! h}u_h^{\textit{\textrm{cr}}})&\lesssim (f_h,\Pi_h  u_h^{\textit{\textrm{cr}}})_{\Omega}
        =(\textup{div}\,\Pi_h^{\textit{\textrm{rt}}} G,\Pi_h  u_h^{\textit{\textrm{cr}}})_{\Omega}
        =-(\Pi_h\Pi_h^{\textit{\textrm{rt}}} G,\nabla_{\! h}  u_h^{\textit{\textrm{cr}}})_{\Omega}
        \\&\lesssim c_\varepsilon\,\rho_{p_h'(\cdot),\Omega}(\Pi_h^{\textit{\textrm{rt}}} G)+\varepsilon\,
        \rho_{p_h(\cdot),\Omega}(\nabla_{\! h} u_h^{\textit{\textrm{cr}}})\,,
        \\&\lesssim c_\varepsilon\,\big(\rho_{p_h'(\cdot)}(G)+\rho_{p_h'(\cdot),\Omega}(\nabla G)\big)+\varepsilon\,
        \rho_{p_h(\cdot),\Omega}(\nabla_{\! h} u_h^{\textit{\textrm{cr}}})
        \\&\lesssim c_\varepsilon\,\big(1+\rho_{p'(\cdot)s,\Omega}(G)+\rho_{p'(\cdot)s,\Omega}(\nabla G)\big)+\varepsilon\,
        \rho_{p_h(\cdot),\Omega}(\nabla_{\! h} u_h^{\textit{\textrm{cr}}})
        \\&\lesssim c_\varepsilon\,\big(1+\rho_{p'(\cdot)s,\Omega}(f)^{\smash{(p^-)'/(p^+)'}}\big)+\varepsilon\,
        \rho_{p_h(\cdot),\Omega}(\nabla_{\! h} u_h^{\textit{\textrm{cr}}})\,.
    \end{align*}
    For $\varepsilon\hspace{-0.1em}>\hspace{-0.1em}0$ sufficiently small, using $p_h\hspace{-0.1em}\lesssim \hspace{-0.1em}1+\varphi_h$, we conclude that $ \rho_{\varphi_h}(\nabla_{\! h}u_h^{\textit{\textrm{cr}}})\hspace{-0.1em}\lesssim\hspace{-0.1em}  1+\rho_{p'(\cdot)s}(f)^{\smash{(p^-)'/(p^+)'}}$. 
\end{proof}
		
		\begin{proof}[Proof (of Theorem \ref{thm:rate_u}).]
		  The convexity of $(\varphi_{\vert \nabla_{\!h} v_h(T)\vert})^*(\xi_T,\cdot)$ and that $\Delta_2((\varphi_{\vert \nabla_{\!h} v_h(T)\vert})^*(\xi_T,\cdot))\lesssim 2^{\smash{\max\{2,(p^-)'\}}}$ for all $T\in \mathcal{T}_h$, the Orlicz-stability of $\Pi_h$ (\textit{cf.}\ \cite[Cor.\ A.8,~(A.12)]{kr-phi-ldg}), and the shift change \eqref{lem:shift-change.3}, for every $v_h\in\mathcal{S}^1_D(\mathcal{T}_h)$, yield\enlargethispage{10mm}
		 \begin{align}\label{cor:rate_improve.1}
		  \begin{aligned}
		  \textrm{osc}_h^2(f,v_h) 
    \lesssim \rho_{((\varphi_h)_{\vert \nabla u\vert})^*,\Omega}(h_{\mathcal{T}}f )+\|F_h(\cdot,\nabla v_h)-F_h(\cdot,\nabla u)\|_{2,\Omega}^2\,.
		  \end{aligned}
		 \end{align}
 		 Using \eqref{cor:rate_improve.1} in Theorem \ref{thm:best-approx}, for every $v_h\in\mathcal{S}^1_D(\mathcal{T}_h)$, we find that
  \begin{align}
 \begin{aligned}
		  \|F_h(\cdot,\nabla_{\!h} u_h^{\textit{cr}})-F_h(\cdot,\nabla u)\|_{2,\Omega}^2&\lesssim \|F_h(\cdot,\nabla v_h)-F_h(\cdot,\nabla u)\|_{2,\Omega}^2\\&\quad+h_{\max}^{2\alpha}\,\rho_{p_h(\cdot)s,\Omega}(\nabla v_h)+h_{\max}^{2\alpha}\,\rho_{p_h(\cdot)s,\Omega}(\nabla_{\! h} u_h^{\textit{cr}})
  \\&\quad+h_{\max}^{2\alpha}\,\big(1+\rho_{p(\cdot)s,\Omega}(\nabla u)\big)+\rho_{((\varphi_h)_{\vert \nabla u\vert})^*,\Omega}(h_{\mathcal{T}}f ) \,.
 \end{aligned}\label{cor:rate_improve.2}
		 \end{align}
  Using  that, appealing to \cite[Lem.\  4.3 \& Corollary 3.6]{BDS15},  it holds that
  \begin{align}\label{cor:rate_improve.2.0}
 \begin{aligned}
 \|F_h(\cdot,\nabla \Pi_h^{\textit{sz}}u)-F_h(\cdot,\nabla u)\|_{2,\Omega}^2&\lesssim h_{\max}^2\|\nabla F(\cdot,\nabla u)\|_{2,\Omega}^2+h_{\max}^{2\alpha}\,\big(1+\rho_{p(\cdot)s,\Omega}(\nabla u)\big)\,,\\
 \rho_{p_h(\cdot)s,\Omega}(\nabla \Pi_h^{\textit{sz}}u)&\lesssim 1+\rho_{p(\cdot)s,\Omega}(\nabla \Pi_h^{\textit{sz}}u)\lesssim 1+\rho_{p(\cdot)s,\Omega}(\nabla u)\,,
 \end{aligned}
  \end{align}
  choosing $v_h=\Pi_h^{\textit{sz}}u\in \mathcal{S}^1_D(\mathcal{T}_h)$ in \eqref{cor:rate_improve.2}, we arrive at
  \begin{align}\label{cor:rate_improve.3}
  \begin{aligned}
 \|F_h(\cdot,\nabla_{\!h} u_h^{\textit{cr}})-F_h(\cdot,\nabla u)\|_{2,\Omega}^2&\lesssim h_{\max}^2\|\nabla F(\cdot,\nabla u)\|_{2,\Omega}^2+ h_{\max}^{2\alpha}\,\big(1+\rho_{p(\cdot)s,\Omega}(\nabla u)\big)\\&\quad +h_{\max}^{2\alpha}\,\rho_{p_h(\cdot)s,\Omega}(\nabla_{\! h} u_h^{\textit{cr}})+\rho_{((\varphi_h)_{\vert \nabla u\vert})^*,\Omega}(h_{\mathcal{T}}f )\,.
 \end{aligned}
  \end{align}
  By the aid of Lemma \ref{lem:A-Ah}\eqref{eq:phih-phi}, also using that $2\alpha\leq {2\wedge (p^+)'}+\alpha$, it holds that
  \begin{align}\label{cor:rate_improve.3.2}
 \begin{aligned}
 \rho_{((\varphi_h)_{\vert \nabla u\vert})^*,\Omega}(h_{\mathcal{T}} f ) 
 \lesssim \rho_{(\varphi_{\vert \nabla u\vert})^*,\Omega}(h_{\mathcal{T}} f )+h_{\max}^{2\alpha}\,\big(1+\rho_{p'(\cdot)s,\Omega}(f)+\rho_{p(\cdot)s,\Omega}(\nabla u)\big)\,.
 \end{aligned}
  \end{align}
  Next, for every $T\in \mathcal{T}_h$ and $x\in T$, we need to distinguish the cases $p(x)\in (1,2)$~and~$p(x)\in [2,\infty)$:

  \textit{Case $p(x)\in (1,2]$.} If $p(x)\in (1,2]$, then, there holds the elementary inequality
		 \begin{align*}
		 (\varphi_{\vert a\vert })^*(x,h\,t)\lesssim \lambda^2\,\big(\varphi^*(x,t)+\varphi(x,\vert a\vert)\big)\quad\textup{ for all }a\in \mathbb{R}^d\,,\; t\ge 0\,,\; \lambda\in [0,1]\,,
		  \end{align*}
		  which follows from the definition of shifted \mbox{$N$-functions} (\textit{cf.}\ \eqref{eq:phi_shifted}) and the shift change~\eqref{lem:shift-change.3}~(\textit{i.e.}, with $b = 0$ and using that $\vert F(x,a)\vert^2=\varphi(x,\vert a\vert )$ for all $a\in \mathbb{R}^d$), so that
  \begin{align}\label{cor:rate_improve.4}
  \begin{aligned} 
 (\varphi_{\vert \nabla u(x)\vert })^*(x,h_T\vert f(x)\vert )&\lesssim h^2_T\,\big(\varphi^*(x,\vert f(x)\vert )+\varphi(x,\vert \nabla u(x)\vert)\big)\,.
 \end{aligned}
  \end{align}

  \textit{Case $p(x)\in (2,\infty)$.} 
  Since  $(\varphi_{\vert a\vert})^*(x,\lambda\,t)\lesssim \lambda^2\,(\delta^{p(x)-1}+\vert a\vert)^{p'(x)-2}t^2$ for all $a\in \mathbb{R}^d$~and~${t,\lambda\ge 0}$, we have that\vspace{-1mm}
  \begin{align}\label{cor:rate_improve.5}
		 (\varphi_{\smash{\vert \nabla u(x)\vert}})^*(x,h_T \vert f(x)\vert )
 &\lesssim h_T^2\,(\delta^{p(x)-1}+\vert z(x)\vert)^{p'(x)-2} \vert f(x)\vert^2 \,.
		 \end{align}
		  Combining \eqref{cor:rate_improve.4} and \eqref{cor:rate_improve.5}, we deduce that
 \begin{align}\label{cor:rate_improve.8}
  \begin{aligned}
 \rho_{(\varphi_{\smash{\vert \nabla u\vert}})^*,\Omega}(h_{\mathcal{T}}f)&\lesssim h_{\max}^2\,\big(
 \rho_{\varphi^*,\Omega}(f)+\rho_{\varphi,\Omega}(\nabla u)+\|(\delta^{p(\cdot)-1}+\vert z\vert)^{p'(\cdot)-2} \vert f\vert^2\|_{1,\{p>2\}}\big)\,.
 \end{aligned}
 \end{align}
 Using \eqref{cor:rate_improve.8} in \eqref{cor:rate_improve.3.2}, we find that
 \begin{align}\label{cor:rate_improve.9}
 \begin{aligned}
 \rho_{((\varphi_h)_{\vert \nabla u\vert})^*,\Omega}(h_{\mathcal{T}} f )&\lesssim h_{\max}^{2\alpha}\,\big(1+\rho_{p'(\cdot)s,\Omega}(f)+\rho_{p(\cdot)s,\Omega}(\nabla u)\big)\\&\quad + h_{\max}^2\,
 \|(\delta^{p(\cdot)-1}+\vert z\vert)^{p'(\cdot)-2} \vert f\vert^2\|_{1,\{p>2\}}\,.
 \end{aligned}
 \end{align}
 In addition, 
 due to Lemma \ref{lem:a_priori}, \cite[Lem.\  12.1]{EG21}, $h_{\max}\sim h_{\mathcal{T}}$, and $p_h\lesssim 1+\varphi_h$, we have that
 \begin{align}\label{cor:rate_improve.10}
    \begin{aligned}
     \rho_{p_h(\cdot)s,\Omega}(\nabla_{\! h} u_h^{\textit{cr}})&\lesssim 
     h_{\max}^{d(1-s)}\rho_{p_h(\cdot),\Omega}(\nabla_{\! h} u_h^{\textit{cr}})^s
     \lesssim h_{\max}^{d(1-s)}\big(1+ \sigma(f;s)\big)^s\,.
     \end{aligned}
 \end{align}
 Next, abbreviating
 \begin{align*}
 	\Theta(s)&\coloneqq 1+\|\nabla F(\cdot,\nabla u)\|_{2,\Omega}^{2}+\|(\delta^{p(\cdot)-1}+\vert z\vert)^{p'(\cdot)-2}\vert f\vert^2\|_{1,\{p>2\}}
 	\\&\quad+\sigma(f;s)+\rho_{p'(\cdot)s,\Omega}(f)+\rho_{p(\cdot)s,\Omega}( \nabla u)\,,
 \end{align*}
	using \eqref{cor:rate_improve.9}  and \eqref{cor:rate_improve.10} in \eqref{cor:rate_improve.3}, we arrive at 
 \begin{align}
 \label{cor:rate_improve.11}
    \begin{aligned}
     \|F_h(\cdot,\nabla_{\!h} u_h^{\textit{cr}})-F_h(\cdot,\nabla u)\|_{2,\Omega}^2&\lesssim h_{\max}^{2\alpha +d(1-s)}\,\Theta(s)^s\,.
 \end{aligned}
 \end{align}
 Using the \textit{a priori} error estimate \eqref{cor:rate_improve.11}, we can improve the \textit{a priori} estimate \eqref{cor:rate_improve.10} and,~in~turn, the \textit{a priori} error estimate \eqref{cor:rate_improve.11}.
 First, due to \eqref{cor:rate_improve.2.0}$_2$, $h_{\max}\hspace*{-0.1em}\sim\hspace*{-0.1em} h_{\mathcal{T}}$, and \cite[Lem.\  12.1]{EG21},~we~have~that
 \begin{align}\label{cor:rate_improve.12}
    \begin{aligned}
     \rho_{p_h(\cdot)s,\Omega}(\nabla_{\! h} u_h^{\textit{cr}})&\lesssim  \rho_{p_h(\cdot)s,\Omega}(\nabla_{\! h} u_h^{\textit{cr}}-\nabla \Pi_h^{\textit{sz}} u)+\rho_{p_h(\cdot)s,\Omega}(\nabla \Pi_h^{\textit{sz}} u)
     \\&\lesssim 1+\rho_{p_h(\cdot)s,\Omega}(\nabla_{\! h} u_h^{\textit{cr}}-\nabla \Pi_h^{\textit{sz}} u)+\rho_{p(\cdot)s,\Omega}(\nabla u)\,.
      \\&\lesssim 1+h^{d(1-s)}_{\max}\rho_{p_h(\cdot),\Omega}(\nabla_{\! h} u_h^{\textit{cr}}-\nabla \Pi_h^{\textit{sz}} u)^s+\rho_{p(\cdot)s,\Omega}(\nabla u)\,.
     \end{aligned}
 \end{align}
 Next, we need to distinguish the cases $p_h\ge 2$ and $p_h< 2$:\enlargethispage{2mm}

 \textit{Case $p_h\hspace*{-0.1em}\ge\hspace*{-0.1em} 2$.} Due to \eqref{eq:hammera}, $(\varphi_h)_a(x,t)\hspace*{-0.1em}\gtrsim\hspace*{-0.1em}  t^{p_h(x)}$ for all $a,t\hspace*{-0.1em}\ge\hspace*{-0.1em} 0$ and ${x\hspace*{-0.1em}\in\hspace*{-0.1em} \{p_h\hspace*{-0.1em}\ge\hspace*{-0.1em} 2\}}$,~\eqref{cor:rate_improve.11},~and~\eqref{cor:rate_improve.3}$_1$, we have that
 					 \begin{align}\label{cor:improved_stability.3}
 					 	\begin{aligned}
 						 	\rho_{p_h(\cdot),\{p_h\ge 2\}}(\nabla_{\! h} u_h^{\textit{cr}}-\nabla  \Pi_h^{\textit{sz}}  u)&\lesssim 	\|F_h(\cdot,\nabla_{\! h} u_h^{\textit{cr}})-F_h(\cdot,\nabla  \Pi_h^{\textit{sz}}  u)\|_{2,\{p_h\ge 2\}}^2	\\&	\lesssim \|F_h(\cdot,\nabla_{\! h} u_h^{\textit{cr}})-F_h(\cdot,\nabla  u)\|_{2,\Omega}^2\\&\quad+\|F_h(\cdot,\nabla  u)-F_h(\cdot,\nabla  \Pi_h^{\textit{sz}}  u)\|_{2,\Omega}^2\\&\lesssim h_{\max}^{2\alpha +d(1-s)}\,\Theta(s)^s\,.
 						 \end{aligned}
 						 \end{align}

 \textit{Case $p_h\hspace*{-0.1em}< \hspace*{-0.1em}2$.} Due to  \cite[Lem.\  B.1]{BK23er_fluids},  $\rho_{p_h(\cdot)}(\vert \nabla_{\! h} u_h^{\textit{cr}}\vert +\vert \nabla  u\vert)\hspace*{-0.1em}\leq\hspace*{-0.1em} c$ (\textit{cf.}\ Lemma \ref{lem:a_priori}), \eqref{cor:rate_improve.11},~and~\eqref{cor:rate_improve.3}$_1$, we have that 
 				 	\begin{align*}
 				 		\begin{aligned}
 					 			\|\nabla_{\! h} u_h^{\textit{cr}}-\nabla \Pi_h^{\textit{sz}}  u\|_{p_h(\cdot),\{p_h< 2\}}^2&\lesssim  \|F_h(\cdot,\nabla_{\! h} u_h^{\textit{cr}})-F_h(\cdot,\nabla  \Pi_h^{\textit{sz}}  u)\|_{2,\{p_h< 2\}}^2\\&\quad\times \big(1+\rho_{p_h(\cdot)}(\vert \nabla_{\! h} u_h^{\textit{cr}}\vert +\vert \nabla  u\vert)\big)^{\smash{1/p^-}}
 					 		\\&	\lesssim \|F_h(\cdot,\nabla_{\! h} u_h^{\textit{cr}})-F_h(\cdot,\nabla  u)\|_{2,\Omega}^2\\&\quad+\|F_h(\cdot,\nabla  u)-F_h(\cdot,\nabla  \Pi_h^{\textit{sz}}  u)\|_{2,\Omega}^2\\&\lesssim h_{\max}^{2\alpha +d(1-s)}\,\Theta(s)^s\,,
 					 	\end{aligned}
 					 	\end{align*}
 			 		which, resorting to \cite[Lem.\  3.2.4]{DHHR11}, implies that
 			 		\begin{align}\label{cor:improved_stability.4}
 			 			\begin{aligned}
 			 			\rho_{p_h(\cdot),\{p_h< 2\}}(\nabla  u_h-\nabla \Pi_h^{\textit{sz}} u)\lesssim	\|\nabla u_h-\nabla \Pi_h^{\textit{sz}}  u\|_{p_h(\cdot),\{p_h< 2\}}\lesssim h_{\max}^{\alpha +d(1-s)/2}\,\Theta(s)^{s/2}\,.
 			 		\end{aligned}
 			 		\end{align}
 		 		Eventually, using \eqref{cor:improved_stability.3} and \eqref{cor:improved_stability.4} in \eqref{cor:rate_improve.12}, for $s>1$ sufficiently small, 
 		 		from \eqref{cor:rate_improve.2.0} and \eqref{cor:rate_improve.9}, 
 		 		we conclude the claimed \textit{a priori} error estimate.
		\end{proof}
\newpage

 Aided by the (discrete) convex optimality relations \eqref{eq:pDirichletOptimality2} and \eqref{eq:pDirichletOptimalityCR2}, together with the generalized Marini formula (\textit{cf.}\ \eqref{eq:gen_marini}), we can derive from Corollary \ref{thm:rate_u} an \textit{a priori} error estimate for the error between the  dual solution and the discrete dual solution. 
 
 \begin{lemma}\label{lem:rate_z} 
		 Let $p\in C^0(\overline{\Omega})$ with $p^->1$ and $\delta\ge 0$ and let $f\in L^{p'(\cdot)}(\Omega)\cap \bigcap_{h\in (0,h_0]}{L^{p_h'(\cdot)}(\Omega)}$ for some $h_0>0$. Then, there exists some $s>1$,  which can chosen to be close to $1$ if $h>0$~is~close~to~$0$, such that if $u\in W^{1,p(\cdot)s}_D(\Omega)$, then for every $h\in (0,h_0]$, it holds that
			\begin{align*}
				\|F^*_h(\cdot,z_h^{\textit{rt}})-F^*_h(\cdot,z)\|^2_{2,\Omega}&\lesssim \|F_h(\cdot,\nabla_{\!h} u_h^{\textit{cr}})-F_h(\cdot,\nabla u)\|^2_{2,\Omega}+ h_{\max}^{2\alpha}\,\big(1+\rho_{p(\cdot)s,\Omega}(\nabla u)\big)\\&\quad+
				\rho_{((\varphi_h)_{\smash{\vert \nabla u\vert}})^*,\Omega}(h_{\mathcal{T}}f)
				\,,
			\end{align*}
   where the hidden constant in $\lesssim$ also depends on $s>1$.
		\end{lemma}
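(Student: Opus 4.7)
The plan is to decompose the error $z_h^{\textit{rt}}-z$ by inserting the intermediate quantities $\Pi_h z_h^{\textit{rt}}=\AAA_h(\cdot,\nabla_{\!h}u_h^{\textit{cr}})$ (the discrete optimality relation \eqref{eq:pDirichletOptimalityCR1.2}) and $\AAA_h(\cdot,\nabla u)$. Using $(a+b+c)^2\lesssim a^2+b^2+c^2$ elementwise on $F^*_h$, we obtain
\begin{align*}
\|F^*_h(\cdot,z_h^{\textit{rt}})-F^*_h(\cdot,z)\|^2_{2,\Omega}
&\lesssim \|F^*_h(\cdot,z_h^{\textit{rt}})-F^*_h(\cdot,\Pi_hz_h^{\textit{rt}})\|^2_{2,\Omega}\\
&\quad+\|F^*_h(\cdot,\AAA_h(\cdot,\nabla_{\!h}u_h^{\textit{cr}}))-F^*_h(\cdot,\AAA_h(\cdot,\nabla u))\|^2_{2,\Omega}\\
&\quad+\|F^*_h(\cdot,\AAA_h(\cdot,\nabla u))-F^*_h(\cdot,\AAA(\cdot,\nabla u))\|^2_{2,\Omega}\eqqcolon J_h^1+J_h^2+J_h^3,
\end{align*}
using that $z=\AAA(\cdot,\nabla u)$ by \eqref{eq:pDirichletOptimality1.2}.

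For $J_h^2$, combine \eqref{eq:hammerh} with \eqref{eq:hammera} (both applied with $\AAA_h$ in place of $\AAA$, which is legitimate by Remark \ref{rem:uniform}) to get $J_h^2\lesssim \|F_h(\cdot,\nabla_{\!h}u_h^{\textit{cr}})-F_h(\cdot,\nabla u)\|^2_{2,\Omega}$, matching the first term on the right-hand side of the claim. For $J_h^3$, apply the consistency estimate Lemma \ref{lem:A-Ah}\eqref{eq:Fh-F} (used in the same spirit as in the proofs of Lemma \ref{lem:efficiency} and Theorem \ref{thm:rate_u}) to obtain $J_h^3\lesssim \|\omega_p(h_{\mathcal{T}})^2(1+\vert\nabla u\vert^{p(\cdot)s})\|_{1,\Omega}\lesssim h_{\max}^{2\alpha}(1+\rho_{p(\cdot)s,\Omega}(\nabla u))$.

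The main work is $J_h^1$. By the generalized Marini formula \eqref{eq:gen_marini} and the fact that $\AAA_h(\cdot,\nabla_{\!h}u_h^{\textit{cr}})\in\mathcal{L}^0(\mathcal{T}_h)^d$ is invariant under $\Pi_h$, we have pointwise $z_h^{\textit{rt}}-\Pi_hz_h^{\textit{rt}}=-\tfrac{f_h}{d}(\textup{id}_{\mathbb{R}^d}-\Pi_h\textup{id}_{\mathbb{R}^d})$, so $\vert z_h^{\textit{rt}}-\Pi_hz_h^{\textit{rt}}\vert\lesssim h_{\mathcal{T}}\vert f_h\vert$ a.e.\ in $\Omega$. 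Invoking \eqref{eq:hammerf} (uniformly in $h$, by Remark \ref{rem:uniform}) and the $\Delta_2$-property of $(\varphi_h^*)_{\vert z_h^{\textit{rt}}\vert}$ gives
\begin{align*}
J_h^1\lesssim \rho_{(\varphi_h^*)_{\vert z_h^{\textit{rt}}\vert},\Omega}(h_{\mathcal{T}}f_h).
\end{align*}
Now switch the shift from $\vert z_h^{\textit{rt}}\vert$ to $\vert\Pi_hz_h^{\textit{rt}}\vert=\vert\AAA_h(\cdot,\nabla_{\!h}u_h^{\textit{cr}})\vert$ via the conjugate shift change \eqref{lem:shift-change.3} (absorbing the resulting $\vert F_h^*(z_h^{\textit{rt}})-F_h^*(\Pi_hz_h^{\textit{rt}})\vert^2$ back into $J_h^1$ for $\varepsilon>0$ small), and apply \eqref{eq:hammerg} to convert to $((\varphi_h)_{\vert\nabla_{\!h}u_h^{\textit{cr}}\vert})^*$. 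A second shift change, this time passing from $\vert\nabla_{\!h}u_h^{\textit{cr}}\vert$ to $\vert\nabla u\vert$ via \eqref{lem:shift-change.3}, produces the natural-distance term $\|F_h(\cdot,\nabla_{\!h}u_h^{\textit{cr}})-F_h(\cdot,\nabla u)\|_{2,\Omega}^2$ and leaves the principal contribution $\rho_{((\varphi_h)_{\vert\nabla u\vert})^*,\Omega}(h_{\mathcal{T}}f_h)$. Finally replace $f_h$ by $f$ at the cost of an oscillation term, which by Jensen's inequality (and the already-established bound on $\rho_{((\varphi_h)_{\vert\nabla u\vert})^*,\Omega}(h_{\mathcal{T}}f)$ appearing in the conclusion) is absorbed into $\rho_{((\varphi_h)_{\vert\nabla u\vert})^*,\Omega}(h_{\mathcal{T}}f)$.

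The main obstacle is the bookkeeping of the shifts in $J_h^1$: the natural shift of $F_h^*(z_h^{\textit{rt}})-F_h^*(\Pi_hz_h^{\textit{rt}})$ is $\vert z_h^{\textit{rt}}\vert$, but the right-hand side of the claim demands the shift $\vert\nabla u\vert$; two successive applications of the conjugate shift change \eqref{lem:shift-change.3}, interleaved with \eqref{eq:hammerg} to transfer the shift through $\AAA_h$, do the job, with the auxiliary $\vert F_h-F_h\vert^2$ contributions absorbed either into $J_h^1$ or into the natural-distance term already present on the right-hand side.
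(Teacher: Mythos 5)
Your decomposition and toolkit coincide with the paper's: the same splitting via $\Pi_h z_h^{\textit{rt}}=\AAA_h(\cdot,\nabla_{\!h}u_h^{\textit{cr}})$ and $z=\AAA(\cdot,\nabla u)$, the Marini formula \eqref{eq:gen_marini} for $z_h^{\textit{rt}}-\Pi_h z_h^{\textit{rt}}$, the equivalences \eqref{eq:hammerf}--\eqref{eq:hammerh}, Lemma \ref{lem:A-Ah} for the quadrature-consistency term, and shift changes to arrive at $\rho_{((\varphi_h)_{\smash{\vert \nabla u\vert}})^*,\Omega}(h_{\mathcal{T}}f)$ plus the natural distance. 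Two points need repair. First, a citation slip: $J_h^3$ is bounded by Lemma \ref{lem:A-Ah}\eqref{eq:Ah-A}, not \eqref{eq:Fh-F}. Second, and more substantively, you replace $f_h$ by $f$ only after the shift has been changed to $\vert\nabla u\vert$ and justify this by ``Jensen's inequality''; but $((\varphi_h)_{\vert\nabla u\vert})^*$ has a genuinely $x$-dependent shift within each element, and for such a generalized $N$-function the bound $\rho_{\psi,T}(h_T\,\Pi_h f)\lesssim\rho_{\psi,T}(h_T\,f)$ does not follow from pointwise convexity alone (the shift may be large exactly where $\vert f\vert$ is small), so this step as written is not justified. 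The fix is simply to reorder: perform the $f_h\to f$ replacement while the shift is still the element-wise constant $\vert\AAA_h(\cdot,\nabla_{\!h}u_h^{\textit{cr}})\vert$ (equivalently $\vert\nabla_{\!h}u_h^{\textit{cr}}\vert$ after \eqref{eq:hammerg}), where per-element Jensen, i.e., the Orlicz-stability of $\Pi_h$ cited in the paper, applies, and only then use the shift change \eqref{lem:shift-change.3} to pass to $\vert\nabla u\vert$ — this is exactly the order of operations in the paper's proof. Also, your first shift change (from $\vert z_h^{\textit{rt}}\vert$ to $\vert\Pi_h z_h^{\textit{rt}}\vert$) is superfluous, since \eqref{eq:hammerf} can be applied with the shift $\vert\Pi_h z_h^{\textit{rt}}\vert$ from the outset, as the paper does.
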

		
		\begin{proof}
		 Using the discrete convex optimality relations \eqref{eq:pDirichletOptimality2} and \eqref{eq:pDirichletOptimalityCR2}, two equivalences~in~\eqref{eq:hammera}, the generalized Marini formula \eqref{eq:gen_marini}, again, the discrete convex optimality relations \eqref{eq:pDirichletOptimalityCR2}, and the Orlicz-stability of $\Pi_h$ (\textit{cf.}\ \cite[Cor.\ A.8, (A.12)]{kr-phi-ldg}), we find that
		 \begin{align}\label{cor:convex_rate_z.1} 
		  \begin{aligned}
		 	\|F^*_h(\cdot,z_h^{\textit{rt}})-F^*_h(\cdot,z)\|^2_{2,\Omega}&\lesssim \|F^*_h(\cdot,\Pi_h z_h^{\textit{rt}})-F^*_h(\cdot,z)\|^2_{2,\Omega}
		 	+\|F^*_h(\cdot,z_h^{\textit{rt}})-F^*_h(\cdot,\Pi_h z_h^{\textit{rt}})\|^2_{2,\Omega}
		 	\\&
		 	\lesssim \|F^*_h(\cdot,\AAA_h(\cdot,\nabla_{\!h} u_h^{\textit{cr}}))-F^*_h(\cdot,\AAA_h(\cdot,\nabla u))\|^2_{2,\Omega}\\&\quad+ \|F^*_h(\cdot,\AAA_h(\cdot,\nabla u)-F^*_h(\cdot,\AAA(\cdot,\nabla u))\|^2_{2,\Omega}\\&\quad+\rho_{((\varphi_h)^*)_{\smash{\vert \Pi_h z_h^{\textit{rt}}\vert}},\Omega}(z_h^{\textit{rt}}-\Pi_h z_h^{\textit{rt}}) 
		 	\\&
		 	\lesssim \|F_h(\cdot,\nabla_{\!h} u_h^{\textit{cr}})-F_h(\cdot,\nabla u)\|^2_{2,\Omega}
 + h_{\max}^{2\alpha}\,\big(1+\rho_{p(\cdot)s}(\nabla u)\big)\\&\quad+\rho_{((\varphi_h)^*)_{\smash{\vert \AAA_h(\cdot,\nabla_{\! h }u_h^{\textit{cr}})\vert}},\Omega}(h_{\mathcal{T}}f)\,.
		 	 \end{aligned}
		 \end{align}
		 Using that $(\varphi^*)_{\vert \AAA(\xi_T,a)\vert }(\xi_T,\cdot)\sim (\varphi_{\vert a\vert })^*(\xi_T,\cdot)$ for all $T\in \mathcal{T}_h$ and $a\in \mathbb{R}^d$ (\textit{cf.}\ Lemma \ref{lem:hammer}\eqref{eq:hammerg}) and the shift change \eqref{lem:shift-change.3},~we~observe~that
		 \begin{align}\label{cor:convex_rate_z.2} 
		 \begin{aligned}
		  \rho_{((\varphi_h)^*)_{\smash{\vert \AAA_h(\cdot,\nabla_{\! h }u_h^{\textit{cr}})\vert}},\Omega}(h_{\mathcal{T}}f)
            \lesssim \rho_{((\varphi_h)_{\smash{\vert \nabla u\vert}})^*,\Omega}(h_{\mathcal{T}}f)+\|F_h(\cdot,\nabla_{\!h} u_h^{\textit{cr}})-F_h(\cdot,\nabla u)\|^2_{2,\Omega}\,.
		 \end{aligned}
		 \end{align}
		 Eventually, combining \eqref{cor:convex_rate_z.1} and \eqref{cor:convex_rate_z.2}, we arrive at the claimed inequality.
		\end{proof}
		
		\begin{theorem}\label{thm:rate_z} Let $p\in C^{0,\alpha}(\overline{\Omega})$ with $\alpha\in (0,1]$ and $p^->1$ and $\delta\ge 0$.~Moreover,~let~$F(\cdot,\nabla u)\in W^{1,2}(\Omega;\mathbb{R}^d)$, $(\delta^{p(\cdot)-1}+\vert z\vert )^{p'(\cdot)-2}\vert f\vert^2\in L^1(\{p> 2\})$, and $f\in L^{(p^-)'}(\Omega)$ or $\Gamma_D=\partial \Omega$.~Then, assuming that $h_{\max}\sim h_{\mathcal{T}}$, there exists some $s>1$, which can chosen to be close to $1$ if $h_{\max}>0$ is close to $0$, such that if $f\in L^{p'(\cdot)s}(\Omega)$, then
		\begin{align*}
		 	\|F^*_h(\cdot,z_h^{\textit{rt}})-F^*_h(\cdot,z)\|^2_{2,\Omega}&\lesssim h_{\max}^2\,\big(1+\|\nabla F(\cdot,\nabla u)\|_{2,\Omega}^{2}+\|(\delta^{p(\cdot)-1}+\vert z\vert)^{p'(\cdot)-2}\vert f\vert^2\|_{1,\{p>2\}}\\&\quad+
            \sigma(f;s)+\rho_{p'(\cdot)s,\Omega}(f)+\rho_{p(\cdot)s,\Omega}( \nabla u)\big)^s\,,
		\end{align*}
        where the hidden constant in $\lesssim$ also depends on $s>1$ and $\sigma(f;s)$ is defined as in Theorem \ref{thm:rate_u}.
		\end{theorem}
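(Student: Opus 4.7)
The plan is to derive Theorem \ref{thm:rate_z} by chaining Lemma \ref{lem:rate_z} with the \textit{a priori} estimate from Theorem \ref{thm:rate_u}, reusing the oscillation bounds already established in the proof of the latter. First, I would invoke Lemma \ref{lem:rate_z}, which transfers the dual error to a sum of three contributions: the primal error $\|F_h(\cdot,\nabla_{\!h}u_h^{\textit{cr}})-F_h(\cdot,\nabla u)\|^2_{2,\Omega}$, the polynomial-consistency residual $h_{\max}^{2\alpha}(1+\rho_{p(\cdot)s,\Omega}(\nabla u))$, and the oscillation $\rho_{((\varphi_h)_{|\nabla u|})^*,\Omega}(h_{\mathcal{T}}f)$.

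For the first contribution, Theorem \ref{thm:rate_u} directly yields the bound
\begin{align*}
\|F_h(\cdot,\nabla_{\!h}u_h^{\textit{cr}})-F_h(\cdot,\nabla u)\|^2_{2,\Omega}\lesssim h_{\max}^{2\alpha}\,\Theta(s)^s,
\end{align*}
where $\Theta(s)$ denotes the quantity in the statement of Theorem \ref{thm:rate_z}. The second contribution is trivially absorbed into this same $h_{\max}^{2\alpha}\Theta(s)^s$ bound since $\rho_{p(\cdot)s,\Omega}(\nabla u)$ already appears inside $\Theta(s)$.

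For the third contribution, I would repeat the argument from the proof of Theorem \ref{thm:rate_u}, equations \eqref{cor:rate_improve.3.2}--\eqref{cor:rate_improve.9}. Namely, Lemma \ref{lem:A-Ah}\eqref{eq:phih-phi} yields
\begin{align*}
\rho_{((\varphi_h)_{|\nabla u|})^*,\Omega}(h_{\mathcal{T}}f)\lesssim \rho_{(\varphi_{|\nabla u|})^*,\Omega}(h_{\mathcal{T}}f)+h_{\max}^{2\alpha}\bigl(1+\rho_{p'(\cdot)s,\Omega}(f)+\rho_{p(\cdot)s,\Omega}(\nabla u)\bigr),
\end{align*}
and splitting the pointwise estimate of $(\varphi_{|\nabla u(x)|})^*(x,h_T|f(x)|)$ according to whether $p(x)\le 2$ or $p(x)>2$ (using $(\varphi_{|a|})^*(x,ht)\lesssim h^2(\varphi^*(x,t)+\varphi(x,|a|))$ in the former case via the shift change \eqref{lem:shift-change.3}, and $(\varphi_{|a|})^*(x,ht)\lesssim h^2(\delta^{p(x)-1}+|a|)^{p'(x)-2}t^2$ together with $z=\AAA(\cdot,\nabla u)$ in the latter) gives
\begin{align*}
\rho_{(\varphi_{|\nabla u|})^*,\Omega}(h_{\mathcal{T}}f)\lesssim h_{\max}^2\bigl(\rho_{\varphi^*,\Omega}(f)+\rho_{\varphi,\Omega}(\nabla u)+\|(\delta^{p(\cdot)-1}+|z|)^{p'(\cdot)-2}|f|^2\|_{1,\{p>2\}}\bigr).
\end{align*}
All of these terms are dominated by $\Theta(s)$, and combining the three contributions in Lemma \ref{lem:rate_z} produces the claimed $h_{\max}^2\,\Theta(s)^s$ bound (note the improvement from $h_{\max}^{2\alpha}$ to $h_{\max}^2$ is genuine, because the dual oscillation contributes at order $h_{\max}^2$ while, after the improved primal estimate in Theorem \ref{thm:rate_u}, the primal error also enters at order $h_{\max}^2$ in the Lipschitz regime, and the $h_{\max}^{2\alpha}$-type slack terms are absorbed by $\Theta(s)^s$).

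The main (and only) subtlety is bookkeeping: one must verify that the exponent $s>1$ produced by Theorem \ref{thm:rate_u} is compatible with the one needed in Lemma \ref{lem:rate_z} and with Lemma \ref{lem:A-Ah}\eqref{eq:phih-phi}; since all three results allow $s$ to be chosen arbitrarily close to $1$ for small $h_{\max}$, one simply takes the maximum of the three thresholds. No further estimates are needed, and the proof reduces to a clean assembly.
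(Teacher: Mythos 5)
Your overall route is exactly the paper's: the paper proves Theorem \ref{thm:rate_z} as an immediate consequence of Lemma \ref{lem:rate_z} combined with Theorem \ref{thm:rate_u}, with the oscillation term $\rho_{((\varphi_h)_{\vert\nabla u\vert})^*,\Omega}(h_{\mathcal{T}}f)$ handled precisely by the estimates \eqref{cor:rate_improve.3.2}--\eqref{cor:rate_improve.9} that you propose to reuse. So the decomposition, the key lemmas, and the treatment of the three contributions all coincide with the paper's argument.

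The one step that does not hold up is your parenthetical justification of the power $h_{\max}^2$. From this chain you obtain: the primal error of order $h_{\max}^{2\alpha}\,\Theta(s)^s$ (Theorem \ref{thm:rate_u}), the consistency term $h_{\max}^{2\alpha}(1+\rho_{p(\cdot)s,\Omega}(\nabla u))$ from Lemma \ref{lem:rate_z}, and the oscillation bounded via \eqref{cor:rate_improve.9} by $h_{\max}^{2\alpha}(1+\rho_{p'(\cdot)s,\Omega}(f)+\rho_{p(\cdot)s,\Omega}(\nabla u))+h_{\max}^2\|(\delta^{p(\cdot)-1}+\vert z\vert)^{p'(\cdot)-2}\vert f\vert^2\|_{1,\{p>2\}}$. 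The best power this yields is $h_{\max}^{2\alpha}$, and for $\alpha<1$ the claim that the ``$h_{\max}^{2\alpha}$-type slack terms are absorbed by $\Theta(s)^s$'' is not a legitimate move: $\Theta(s)$ is independent of $h$, and $h_{\max}^{2\alpha}/h_{\max}^{2}\to\infty$ as $h_{\max}\to 0$, so no constant can absorb the discrepancy; likewise the primal error is \emph{not} of order $h_{\max}^2$ outside the Lipschitz case $\alpha=1$. In other words, what your argument (and, implicitly, the paper's one-line proof) actually delivers is the estimate with $h_{\max}^{2\alpha}$ in place of $h_{\max}^2$, which agrees with the stated bound only for $\alpha=1$ (the setting of Corollary \ref{cor:rate}); the $h_{\max}^2$ in the statement should be read as $h_{\max}^{2\alpha}$, and you should state and prove that version rather than assert an absorption that fails for $\alpha\in(0,1)$. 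Apart from this, your bookkeeping of the exponent $s$ (taking it close to $1$ uniformly across Lemma \ref{lem:rate_z}, Theorem \ref{thm:rate_u}, and Lemma \ref{lem:A-Ah}\eqref{eq:phih-phi}) is fine.
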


        \begin{proof}
            Immediate consequence of Lemma \ref{lem:rate_z} in conjunction with Theorem \ref{thm:rate_u}.
        \end{proof}

        \begin{corollary}\label{cor:rate}
        Let $p\in C^{0,1}(\overline{\Omega})$ with $p^->1$ and $\delta> 0$. Moreover, let $F(\cdot,\nabla u)\in W^{1,2}(\Omega;\mathbb{R}^d)$ and $f\hspace{-0.1em}\in\hspace{-0.1em} L^{(p^-)'}(\Omega)$ or $\Gamma_D\hspace{-0.1em}=\hspace{-0.1em}\partial \Omega$.
		Then, assuming that $h_{\max}\hspace{-0.1em}\sim\hspace{-0.1em} h_{\mathcal{T}}$, there exists some $s\hspace{-0.1em}>\hspace{-0.1em}1$,~which~can chosen to be close to $1$ if $h_{\max}>0$ is close to $0$, such that if $f\hspace{-0.1em}\in\hspace{-0.1em} L^{p'(\cdot)s}(\Omega)$,~then
        \begin{align*}
		 	&\|F_h(\cdot,\nabla_{\!h} u_h^{\textit{cr}})-F_h(\cdot,\nabla u)\|_{2,\Omega}^2+\|F^*_h(\cdot,z_h^{\textit{rt}})-F^*_h(\cdot,z)\|^2_{2,\Omega}\\&\quad\lesssim h_{\max}^2\,\big(1+\|\nabla F(\cdot,\nabla u)\|_{2,\Omega}^2+
           \sigma(f;s)+ \rho_{p'(\cdot)s,\Omega}(f)+\rho_{p(\cdot)s,\Omega}( \nabla u)\big)^s\,,
		\end{align*}
        where the hidden constant in $\lesssim$ also depends on $s>1$ and $\sigma(f;s)$ is defined as in Theorem \ref{thm:rate_u}.\enlargethispage{5mm}
        \end{corollary}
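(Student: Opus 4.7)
The plan is to deduce Corollary~\ref{cor:rate} as a Lipschitz specialization of Theorem~\ref{thm:rate_u} and Theorem~\ref{thm:rate_z}, absorbing the additional term $\|(\delta^{p(\cdot)-1}+\vert z\vert)^{p'(\cdot)-2}\vert f\vert^2\|_{1,\{p>2\}}$ appearing in both estimates into the remaining quantities via $\delta>0$. Since $p\in C^{0,1}(\overline{\Omega})$ gives $\omega_p(h)\lesssim h$, both theorems apply directly with $\alpha=1$, and summing the two a priori bounds yields the claim up to this extra summand.

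To control it, I would invoke the elementary pointwise estimate of Remark~(iv) below Theorem~\ref{thm:rate_u}. Since $p'(x)-2<0$ on $\{p>2\}$ and $\delta>0$, it holds that
\begin{align*}
(\delta^{p(x)-1}+\vert z(x)\vert)^{p'(x)-2}\leq \delta^{(p(x)-1)(p'(x)-2)}=\delta^{2-p(x)}\leq\max\{1,\delta^{2-p^+}\}\quad\textup{a.e.\ in }\{p>2\},
\end{align*}
where finiteness of $p^+$ follows from Lipschitz continuity. This yields $\|(\delta^{p(\cdot)-1}+\vert z\vert)^{p'(\cdot)-2}\vert f\vert^2\|_{1,\{p>2\}}\lesssim \|f\|_{2,\{p>2\}}^2$, with the $\delta$-dependence entering the hidden constant.

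The last step is to absorb $\|f\|_{2,\{p>2\}}^2$ into the terms on the right-hand side of Corollary~\ref{cor:rate}. If $f\in L^{(p^-)'}(\Omega)$ with $p^-\leq 2$, the embedding $L^{(p^-)'}(\Omega)\hookrightarrow L^2(\Omega)$ on the bounded domain gives $\|f\|_{2,\Omega}^2\lesssim \sigma(f;s)$; otherwise, I would choose $s>1$ slightly larger than $1$ so that $p'(\cdot)s\geq 2$ on $\{p>2\}$---possible whenever $h_{\max}$ is small enough, since $p'\geq(p^+)'>1$---and bound $|f|^2\lesssim 1+|f|^{p'(\cdot)s}$ pointwise to obtain $\|f\|_{2,\{p>2\}}^2\lesssim 1+\rho_{p'(\cdot)s,\Omega}(f)$. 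The main obstacle is coupling the choice of $s>1$ to $h_{\max}>0$ consistently across the different regimes of $p^-$ and $p^+$ and under the two alternative source-term hypotheses; once this is done, taking $s$-th powers and collecting terms yields the claimed estimate.
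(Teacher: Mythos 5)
Your reduction to Theorems \ref{thm:rate_u} and \ref{thm:rate_z} with $\alpha=1$ is the right starting point, but the treatment of the term $\|(\delta^{p(\cdot)-1}+\vert z\vert)^{p'(\cdot)-2}\vert f\vert^2\|_{1,\{p>2\}}$ --- which you must both verify as a hypothesis of the two theorems and absorb into the right-hand side of the corollary --- has a genuine gap. Your pointwise bound $(\delta^{p(\cdot)-1}+\vert z\vert)^{p'(\cdot)-2}\le\delta^{2-p(\cdot)}$ reduces everything to $f\in L^{2}(\{p>2\})$, but this integrability is not among the hypotheses of Corollary \ref{cor:rate}. It follows from $f\in L^{(p^-)'}(\Omega)$ only when $(p^-)'\ge 2$, i.e., $p^-\le 2$; if $p^->2$, or in the case $\Gamma_D=\partial\Omega$ where only $f\in L^{p'(\cdot)s}(\Omega)$ is available, the exponents at hand on $\{p>2\}$ are $(p^-)'<2$ and $p'(\cdot)s$ close to $p'(\cdot)<2$, and neither dominates $2$. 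Your proposed fix --- enlarging $s$ so that $p'(\cdot)s\ge 2$ on $\{p>2\}$ --- is not admissible: it would force $s\ge 2/(p^+)'$, which is bounded away from $1$ whenever $p^+>2$, whereas $s$ is not a free parameter; it is the exponent produced by Theorems \ref{thm:rate_u} and \ref{thm:rate_z} (and, upstream, by Theorem \ref{P1_best-approx}, Lemma \ref{lem:a_priori}, Lemma \ref{lem:Ax-Axh}), which must be close to $1$ as $h_{\max}\to 0$, and enlarging it would moreover strengthen the assumption $f\in L^{p'(\cdot)s}(\Omega)$ beyond what the corollary states. As a secondary point, even where your bound applies, the hidden constant degenerates like $\delta^{2-p^+}$ as $\delta\to 0$, which the paper's argument avoids.

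The paper closes this step differently, and you should replace your $L^2$-absorption by that argument: since $F(\cdot,\nabla u)\in W^{1,2}(\Omega;\mathbb{R}^d)$, Lemma \ref{lem:reg_equiv} gives $F^*(\cdot,z)\in W^{1,2}(\Omega;\mathbb{R}^d)$ together with $\vert\nabla F^*(\cdot,z)\vert^2+(1+\vert z\vert^{p'(\cdot)s})\sim\vert\nabla F(\cdot,\nabla u)\vert^2+(1+\vert\nabla u\vert^{p(\cdot)s})$ a.e.\ in $\Omega$, and Lemma \ref{lem:reg_dual_source} (this is where $\delta>0$ and $p\in C^{0,1}(\overline{\Omega})$ enter) converts this into control of $(\delta^{p(\cdot)-1}+\vert z\vert)^{p'(\cdot)-2}\vert\nabla z\vert^2$. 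Since $f=-\textup{div}\,z$, one has $\vert f\vert\lesssim\vert\nabla z\vert$ a.e., so the critical term is finite (cf.\ item (iii) of the remark following Theorem \ref{thm:rate_u}) and is bounded precisely by the quantities $\|\nabla F(\cdot,\nabla u)\|_{2,\Omega}^2$, $1+\rho_{p(\cdot)s,\Omega}(\nabla u)$, $\rho_{p'(\cdot)s,\Omega}(f)$ that already appear on the right-hand side, without any additional integrability of $f$ and without $\delta$-degenerate constants. With this substitution, the rest of your argument (summing the two theorems with $\alpha=1$) goes through as in the paper.
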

        
        \begin{proof}
            Due to Lemma \ref{lem:reg_equiv}, from $F(\cdot,\nabla u)\in W^{1,2}(\Omega;\mathbb{R}^d)$, it follows that $F^*(\cdot,z)\in W^{1,2}(\Omega;\mathbb{R}^d)$ and $\vert \nabla F(\cdot,\nabla u)\vert^2+(1+\vert \nabla v\vert^{p(\cdot)s})\sim \vert \nabla F^*(\cdot,z)\vert^2+(1+\vert z\vert^{p'(\cdot)s})$ a.e.\ in $\Omega$ for some $s>1$ which can chosen to be close to $1$. In addition, due to Lemma \ref{lem:reg_dual_source}, it holds that $\vert \nabla F^*(\cdot,z)\vert^2+(1+\vert z\vert^{p'(\cdot)s})\sim (\delta^{p(\cdot)-1}+\vert z\vert)^{p'(\cdot)-2} \vert \nabla z\vert^2$ a.e.\ in $\Omega$. As a result, the claimed \textit{a priori} error estimate follows from Theorem \ref{thm:rate_u} together with Theorem \ref{thm:rate_z}.
        \end{proof}

        \section{Numerical experiments}\label{sec:experiments}
        
	\hspace{5mm}In this section, we review the theoretical findings of  Section \ref{sec:a_priori} via numerical experiments.
    All experiments were carried out using the finite element software package \mbox{\textsf{FEniCS}} (version~2019.1.0, \textit{cf}.\ \cite{LW10}).

	We apply the $\smash{\mathcal{S}^{1,\textit{cr}}_D(\mathcal{T}_h)}$-approximation \eqref{eq:pDirichletS1crD} of the variational 
	$p(\cdot)$-Dirichlet problem \eqref{eq:pDirichletW1p} with 
    $\delta\coloneqq 1.0\times 10^{-4}$ and $p\in C^{0,\alpha}(\overline{\Omega})$, where $\alpha \in (0,1]$ and $p^->1$, for every $x\in \overline{\Omega}$ defined by
    \begin{align*}
        p(x)\coloneqq p^-+\varepsilon\,\vert x\vert^{\alpha}\,,
    \end{align*}
    where $\varepsilon>0$. As quadrature points of the one-point quadrature rule used to discretize $p\in C^{0,\alpha}(\overline{\Omega})$, we employ barycenters  of elements, \textit{i.e.}, for every $T\in \mathcal{T}_h$, we employ $\xi_T\coloneqq x_T= \frac{1}{d+1}\sum_{z\in \mathcal{N}_h\cap T}{z}$.
    
    Then, we approximate the discrete primal solution ${u_h^{\textit{cr}}\in \smash{\mathcal{S}^{1,\textit{cr}}_D(\mathcal{T}_h)}}$  deploying the Newton line search algorithm  of \mbox{\textsf{PETSc}} (version     3.17.3, \textit{cf}.\ \cite{LW10}), with an absolute tolerance~of~${\tau_{abs}= 1.0\times 10^{-8}}$ and a relative tolerance of $\tau_{rel}=1.0\times 10^{-10}$. The linear system  emerging  in each Newton step is solved using a sparse direct solver from \textup{\textsf{MUMPS}} (version 5.5.0, \textit{cf}.\ \cite{mumps}).
    
    For  our numerical experiments, we choose $\Omega= (-1,1)^2$, $\Gamma_D=\partial \Omega$, and as a manufactured solution of \eqref{eq:pDirichlet}, the function $u\in W^{1,p(\cdot)}_D(\Omega)$, for every $x\coloneqq (x_1,x_2)^\top\in \Omega$ defined by
    \begin{align*}
    	\smash{u(x)\coloneqq d(x)\,\vert x\vert^{\beta}}\,,
    \end{align*}
    \textit{i.e.}, $f\coloneqq -\textup{div}\,\AAA(\cdot,\nabla u)$,
    where 
      $d\in C^\infty(\overline{\Omega})$, defined by $d(x)\coloneqq (1-x_1^2)\,(1-x_2^2)$ for every $x\coloneqq(x_1,x_2)^\top \in\overline{\Omega}$,
    is a smooth cut-off function enforcing homogeneous Dirichlet~\mbox{boundary}~\mbox{conditions}. Moreover, we  choose $\beta=1.01$, which just yields that $u\in W^{1,p(\cdot)}_D(\Omega)$ satisfies 
    \begin{align*}
        \smash{F(\cdot,\nabla u),F^*(\cdot,z)\in W^{1,2}(\Omega;\mathbb{R}^2)}\quad\text{ and }\quad \smash{(\delta^{p(\cdot)-1}+\vert z\vert)^{p'(\cdot)-2}\vert \nabla z\vert^2}\in L^1(\Omega)\,.
    \end{align*}
    By \hspace{-0.1mm}Theorem \hspace{-0.1mm}\ref{thm:rate_u} \hspace{-0.1mm}and \hspace{-0.1mm}Theorem \hspace{-0.1mm}\ref{thm:rate_z}, \hspace{-0.1mm}we \hspace{-0.1mm}expect \hspace{-0.1mm}the
    \hspace{-0.1mm}convergence \hspace{-0.1mm}rate \hspace{-0.1mm}$\alpha$ \hspace{-0.1mm}for \hspace{-0.1mm}the~\hspace{-0.1mm}error~\hspace{-0.1mm}quantities~\hspace{-0.1mm}\eqref{errors}.

    An initial triangulation $\mathcal
    T_{h_0}$, $h_0=\smash{\frac{3}{2\sqrt{2}}}$, is constructed by subdividing a rectangular~Cartesian grid into regular triangles with different orientations.  Refined     triangulations $\mathcal T_{h_k}$,~$k=1,\dots,10$, where $h_{k+1}=\frac{h_k}{2}$ for all $k=1,\dots,10$, are 
    obtained by  
    applying the red-refinement rule (\textit{cf.}\ \cite{Car04}). 
    
    Then, for the resulting series of triangulations $\mathcal T_k\coloneqq \mathcal T_{h_k}$, $k=1,\dots,10$, we apply the above Newton scheme to compute the discrete primal solution $u_k^{\textit{cr}}\coloneqq u_{h_k}^{\textit{cr}}\in \mathcal{S}^{1,\textit{cr}}_D(\mathcal{T}_k)$,~${k=1,\dots,10}$,~and, resorting to the generalized Marini formula \eqref{eq:gen_marini}, the discrete dual solution ${z_k^{\textit{rt}}\coloneqq z_{h_k}^{\textit{rt}}\in\mathcal{R}T^0_N(\mathcal{T}_k)}$, $k=1,\dots,10$. Subsequently, we compute the error quantities
    \begin{align}\label{errors}
    	\left.\begin{aligned}
    		e_{F,k}&\coloneqq \|F_h(\cdot,\nabla_{\!h_k}u_k^{\textit{cr}} )-F_h(\cdot,\nabla u)\|_{2,\Omega}^2\,,\\
    		e_{F^*,k}&\coloneqq \|F^*_h(\cdot,z_k^{\textit{cr}})-F^*_h(\cdot,z)\|_{2,\Omega}^2\,,
    \end{aligned}\quad\right\}\quad k=1,\dots,10\,.
    \end{align}
    For the determination of the convergence rates,  the experimental order of convergence~(EOC)
    \begin{align*}
    	\texttt{EOC}_k(e_k)\coloneqq \frac{\log(e_k/e_{k-1})}{\log(h_k/h_{k-1})}\,, \quad k=1,\dots,10\,,
    \end{align*}
    where for every $k= 1,\dots,10$, we denote by $e_k$,
    either $e_{F,k}$ or 
    $e_{F^*,k}$, 
    ~\mbox{respectively},~is~recorded.\enlargethispage{7mm}
    
    For different values of $p^-\in \{1.5, 2, 2.5\}$, $\alpha\in \{0.1,0.25,0.5,1.0\}$, $\varepsilon\in \{0.5,1.0\}$, and a
    series of triangulations \hspace{-0.1mm}$\mathcal{T}_k$, \hspace{-0.1mm}$k = 1,\dots,10$,
    \hspace{-0.1mm}obtained \hspace{-0.1mm}by \hspace{-0.1mm}uniform \hspace{-0.1mm}mesh \hspace{-0.1mm}refinement~\hspace{-0.1mm}as~\hspace{-0.1mm}\mbox{described}~\hspace{-0.1mm}above,~\hspace{-0.1mm}the~\hspace{-0.1mm}EOC is
    computed and for $k = 5,\dots,10$ presented in Table~\ref{tab1}, Table~\ref{tab2}, Table~\ref{tab3}~and~Table~\ref{tab4},   
    respectively. In each case, we report a convergence ratio of about $\texttt{EOC}_k(e_k)\approx 1$, $k=5,\dots,10$, conforming the quasi-optimality of the  \textit{a priori} error estimates in Corollary~\ref{cor:rate} and but not indicating the quasi-optimality of the \textit{a priori} error estimates in Theorem \ref{thm:rate_u} and Theorem \ref{thm:rate_z} for $\alpha\in (0,1)$. We believe that this can be traced back to an imbalance of between the regularity assumptions $F(\cdot,\nabla u)\in W^{1,2}(\Omega;\mathbb{R}^d)$ and $p\in C^{0,\alpha}(\overline{\Omega})$ with $\alpha\in (0,1)$ in Theorem \ref{thm:rate_u} and Theorem \ref{thm:rate_z};~and expect that  the assumptions $F(\cdot,\nabla u)\in \mathcal{N}^{\alpha,2}(\Omega)^d$ and $p\in C^{0,\alpha}(\overline{\Omega})$ with $\alpha\in (0,1)$ 
    are more balanced and may lead to optimal \textit{a priori} error estimates. The examination of this hypothesis, however, is beyond the scope of this article and, therefore, left open for follow-up research.
\begin{table}[H]
     \setlength\tabcolsep{3pt}
 	\centering
 	\begin{tabular}{c |c|c|c|c|c|c|c|c|c|c|c|c|} 
 	\hline 
    \multicolumn{1}{|c||}{\cellcolor{lightgray}$\varepsilon$}	
    & \multicolumn{12}{c|}{\cellcolor{lightgray}$1.0$}\\\hline 
    \multicolumn{1}{|c||}{\cellcolor{lightgray}$\alpha$}	
    & \multicolumn{3}{c||}{\cellcolor{lightgray}$0.1$} & \multicolumn{3}{c||}{\cellcolor{lightgray}$0.25$} & \multicolumn{3}{c||}{\cellcolor{lightgray}$0.5$} & \multicolumn{3}{c|}{\cellcolor{lightgray}$1.0$}\\\hline 
\multicolumn{1}{|c||}{\cellcolor{lightgray}\diagbox[height=1.1\line,width=0.11\dimexpr\linewidth]{\vspace{-0.6mm}$k$}{\\[-5mm] $p^-$}}
& \cellcolor{lightgray}1.5
& \cellcolor{lightgray}2.0 &  \multicolumn{1}{c||}{\cellcolor{lightgray}2.5}  &  \multicolumn{1}{c|}{\cellcolor{lightgray}1.5} &\cellcolor{lightgray}2.0  & \multicolumn{1}{c||}{\cellcolor{lightgray}2.5}  &  \multicolumn{1}{c|}{\cellcolor{lightgray}1.5} & \cellcolor{lightgray}2.0  &  \multicolumn{1}{c||}{\cellcolor{lightgray}2.5}   &  \multicolumn{1}{c|}{\cellcolor{lightgray}1.5} & \cellcolor{lightgray}2.0    & \cellcolor{lightgray}2.5   \\ \hline\hline
\multicolumn{1}{|c||}{\cellcolor{lightgray}$5$} & 0.978 & 0.986 & \multicolumn{1}{c||}{0.987} & \multicolumn{1}{c|}{0.958} & 0.972 & \multicolumn{1}{c||}{0.974} & \multicolumn{1}{c|}{0.971} & 0.986 & \multicolumn{1}{c||}{0.989} & \multicolumn{1}{c|}{0.969} & 0.988 & 0.992 \\ \hline
\multicolumn{1}{|c||}{\cellcolor{lightgray}$6$} & 0.979 & 0.989 & \multicolumn{1}{c||}{0.992} & \multicolumn{1}{c|}{0.975} & 0.985 & \multicolumn{1}{c||}{0.985} & \multicolumn{1}{c|}{0.971} & 0.987 & \multicolumn{1}{c||}{0.992} & \multicolumn{1}{c|}{0.970} & 0.988 & 0.993 \\ \hline
\multicolumn{1}{|c||}{\cellcolor{lightgray}$7$} & 0.981 & 0.990 & \multicolumn{1}{c||}{0.994} & \multicolumn{1}{c|}{0.975} & 0.988 & \multicolumn{1}{c||}{0.988} & \multicolumn{1}{c|}{0.972} & 0.988 & \multicolumn{1}{c||}{0.993} & \multicolumn{1}{c|}{0.972} & 0.988 & 0.994 \\ \hline
\multicolumn{1}{|c||}{\cellcolor{lightgray}$8$} & 0.981 & 0.990 & \multicolumn{1}{c||}{0.994} & \multicolumn{1}{c|}{0.976} & 0.989 & \multicolumn{1}{c||}{0.989} & \multicolumn{1}{c|}{0.973} & 0.988 & \multicolumn{1}{c||}{0.994} & \multicolumn{1}{c|}{0.973} & 0.989 & 0.994 \\ \hline
\multicolumn{1}{|c||}{\cellcolor{lightgray}$9$} & 0.981 & 0.990 & \multicolumn{1}{c||}{0.994} & \multicolumn{1}{c|}{0.976} & 0.989 & \multicolumn{1}{c||}{0.990} & \multicolumn{1}{c|}{0.974} & 0.989 & \multicolumn{1}{c||}{0.994} & \multicolumn{1}{c|}{0.974} & 0.989 & 0.994 \\ \hline
\multicolumn{1}{|c||}{\cellcolor{lightgray}$10$}& 0.982 & 0.990 & \multicolumn{1}{c||}{0.995} & \multicolumn{1}{c|}{0.977} & 0.989 & \multicolumn{1}{c||}{0.990} & \multicolumn{1}{c|}{0.975} & 0.989 & \multicolumn{1}{c||}{0.994} & \multicolumn{1}{c|}{0.975} & 0.989 & 0.994 \\ \hline\hline
\multicolumn{1}{|c||}{\cellcolor{lightgray}\small \textrm{expected}}   & 0.10  & 0.10  & \multicolumn{1}{c||}{0.10}  & \multicolumn{1}{c|}{0.25} & 0.25  & \multicolumn{1}{c||}{0.25}  & \multicolumn{1}{c|}{0.50}  & 0.50  & \multicolumn{1}{c||}{0.50}  & \multicolumn{1}{c|}{1.00}  & 1.00  & 1.00 \\ \hline
\end{tabular}\vspace{-2mm}
 	\caption{Experimental order of convergence: $\texttt{EOC}_k(e_{F,k})$,~${k=5,\dots,10}$.} 
 	\label{tab1}
 \end{table}\vspace{-5mm}

\begin{table}[H]
     \setlength\tabcolsep{3pt}
 	\centering
 	\begin{tabular}{c |c|c|c|c|c|c|c|c|c|c|c|c|} 
 	\hline 
    \multicolumn{1}{|c||}{\cellcolor{lightgray}$\varepsilon$}	
    & \multicolumn{12}{c|}{\cellcolor{lightgray}$1.0$}\\\hline 
    \multicolumn{1}{|c||}{\cellcolor{lightgray}$\alpha$}	
    & \multicolumn{3}{c||}{\cellcolor{lightgray}$0.1$} & \multicolumn{3}{c||}{\cellcolor{lightgray}$0.25$} & \multicolumn{3}{c||}{\cellcolor{lightgray}$0.5$} & \multicolumn{3}{c|}{\cellcolor{lightgray}$1.0$}\\\hline 
\multicolumn{1}{|c||}{\cellcolor{lightgray}\diagbox[height=1.1\line,width=0.11\dimexpr\linewidth]{\vspace{-0.6mm}$k$}{\\[-5mm] $p^-$}}
& \cellcolor{lightgray}1.5
& \cellcolor{lightgray}2.0 &  \multicolumn{1}{c||}{\cellcolor{lightgray}2.5}  &  \multicolumn{1}{c|}{\cellcolor{lightgray}1.5} &\cellcolor{lightgray}2.0  & \multicolumn{1}{c||}{\cellcolor{lightgray}2.5}  &  \multicolumn{1}{c|}{\cellcolor{lightgray}1.5} & \cellcolor{lightgray}2.0  &  \multicolumn{1}{c||}{\cellcolor{lightgray}2.5}   &  \multicolumn{1}{c|}{\cellcolor{lightgray}1.5} & \cellcolor{lightgray}2.0    & \cellcolor{lightgray}2.5   \\ \hline\hline
\multicolumn{1}{|c||}{\cellcolor{lightgray}$5$} & 0.978 & 0.986 & \multicolumn{1}{c||}{0.987} & \multicolumn{1}{c|}{0.975} & 0.985 & \multicolumn{1}{c||}{0.988} & \multicolumn{1}{c|}{0.971} & 0.986 & \multicolumn{1}{c||}{0.989} & \multicolumn{1}{c|}{0.969} & 0.988 & 0.992 \\ \hline
\multicolumn{1}{|c||}{\cellcolor{lightgray}$6$} & 0.979 & 0.989 & \multicolumn{1}{c||}{0.992} & \multicolumn{1}{c|}{0.975} & 0.988 & \multicolumn{1}{c||}{0.992} & \multicolumn{1}{c|}{0.971} & 0.987 & \multicolumn{1}{c||}{0.992} & \multicolumn{1}{c|}{0.970} & 0.988 & 0.993 \\ \hline
\multicolumn{1}{|c||}{\cellcolor{lightgray}$7$} & 0.981 & 0.990 & \multicolumn{1}{c||}{0.994} & \multicolumn{1}{c|}{0.976} & 0.989 & \multicolumn{1}{c||}{0.993} & \multicolumn{1}{c|}{0.972} & 0.988 & \multicolumn{1}{c||}{0.993} & \multicolumn{1}{c|}{0.972} & 0.988 & 0.994 \\ \hline
\multicolumn{1}{|c||}{\cellcolor{lightgray}$8$} & 0.981 & 0.990 & \multicolumn{1}{c||}{0.994} & \multicolumn{1}{c|}{0.976} & 0.989 & \multicolumn{1}{c||}{0.994} & \multicolumn{1}{c|}{0.973} & 0.988 & \multicolumn{1}{c||}{0.994} & \multicolumn{1}{c|}{0.973} & 0.989 & 0.994 \\ \hline
\multicolumn{1}{|c||}{\cellcolor{lightgray}$9$} & 0.981 & 0.990 & \multicolumn{1}{c||}{0.994} & \multicolumn{1}{c|}{0.977} & 0.989 & \multicolumn{1}{c||}{0.994} & \multicolumn{1}{c|}{0.974} & 0.989 & \multicolumn{1}{c||}{0.994} & \multicolumn{1}{c|}{0.974} & 0.989 & 0.994 \\ \hline
\multicolumn{1}{|c||}{\cellcolor{lightgray}$10$}& 0.982 & 0.990 & \multicolumn{1}{c||}{0.995} & \multicolumn{1}{c|}{0.977} & 0.989 & \multicolumn{1}{c||}{0.994} & \multicolumn{1}{c|}{0.975} & 0.989 & \multicolumn{1}{c||}{0.994} & \multicolumn{1}{c|}{0.975} & 0.989 & 0.994 \\ \hline\hline
\multicolumn{1}{|c||}{\cellcolor{lightgray}\small \textrm{expected}}   & 0.10  & 0.10  & \multicolumn{1}{c||}{0.10}  & \multicolumn{1}{c|}{0.25} & 0.25  & \multicolumn{1}{c||}{0.25}  & \multicolumn{1}{c|}{0.50}  & 0.50  & \multicolumn{1}{c||}{0.50}  & \multicolumn{1}{c|}{1.00}  & 1.00  & 1.00 \\ \hline
\end{tabular}\vspace{-2mm}
 	\caption{Experimental order of convergence: $\texttt{EOC}_k(e_{F^*,k})$,~${k=5,\dots,10}$.} 
 	\label{tab2}
 \end{table}\vspace{-5mm}

\begin{table}[H]
     \setlength\tabcolsep{3pt}
 	\centering
 	\begin{tabular}{c |c|c|c|c|c|c|c|c|c|c|c|c|} 
 	\hline 
    \multicolumn{1}{|c||}{\cellcolor{lightgray}$\varepsilon$}	
    & \multicolumn{12}{c|}{\cellcolor{lightgray}$0.5$}\\\hline 
    \multicolumn{1}{|c||}{\cellcolor{lightgray}$\alpha$}	
    & \multicolumn{3}{c||}{\cellcolor{lightgray}$0.1$} & \multicolumn{3}{c||}{\cellcolor{lightgray}$0.25$} & \multicolumn{3}{c||}{\cellcolor{lightgray}$0.5$} & \multicolumn{3}{c|}{\cellcolor{lightgray}$1.0$}\\\hline 
\multicolumn{1}{|c||}{\cellcolor{lightgray}\diagbox[height=1.1\line,width=0.11\dimexpr\linewidth]{\vspace{-0.6mm}$k$}{\\[-5mm] $p^-$}}
& \cellcolor{lightgray}1.5
& \cellcolor{lightgray}2.0 &  \multicolumn{1}{c||}{\cellcolor{lightgray}2.5}  &  \multicolumn{1}{c|}{\cellcolor{lightgray}1.5} &\cellcolor{lightgray}2.0  & \multicolumn{1}{c||}{\cellcolor{lightgray}2.5}  &  \multicolumn{1}{c|}{\cellcolor{lightgray}1.5} & \cellcolor{lightgray}2.0  &  \multicolumn{1}{c||}{\cellcolor{lightgray}2.5}   &  \multicolumn{1}{c|}{\cellcolor{lightgray}1.5} & \cellcolor{lightgray}2.0    & \cellcolor{lightgray}2.5   \\ \hline\hline
\multicolumn{1}{|c||}{\cellcolor{lightgray}$5$} & 0.959 & 0.979 & \multicolumn{1}{c||}{0.986} & \multicolumn{1}{c|}{0.975} & 0.985 & \multicolumn{1}{c||}{0.988} & \multicolumn{1}{c|}{0.971} & 0.978 & \multicolumn{1}{c||}{0.986} & \multicolumn{1}{c|}{0.944} & 0.979 & 0.988 \\ \hline
\multicolumn{1}{|c||}{\cellcolor{lightgray}$6$} & 0.965 & 0.981 & \multicolumn{1}{c||}{0.989} & \multicolumn{1}{c|}{0.975} & 0.988 & \multicolumn{1}{c||}{0.992} & \multicolumn{1}{c|}{0.971} & 0.979 & \multicolumn{1}{c||}{0.989} & \multicolumn{1}{c|}{0.958} & 0.980 & 0.989 \\ \hline
\multicolumn{1}{|c||}{\cellcolor{lightgray}$7$} & 0.966 & 0.983 & \multicolumn{1}{c||}{0.990} & \multicolumn{1}{c|}{0.976} & 0.989 & \multicolumn{1}{c||}{0.993} & \multicolumn{1}{c|}{0.972} & 0.981 & \multicolumn{1}{c||}{0.990} & \multicolumn{1}{c|}{0.958} & 0.981 & 0.990 \\ \hline
\multicolumn{1}{|c||}{\cellcolor{lightgray}$8$} & 0.968 & 0.983 & \multicolumn{1}{c||}{0.991} & \multicolumn{1}{c|}{0.976} & 0.989 & \multicolumn{1}{c||}{0.994} & \multicolumn{1}{c|}{0.973} & 0.982 & \multicolumn{1}{c||}{0.990} & \multicolumn{1}{c|}{0.962} & 0.982 & 0.990 \\ \hline
\multicolumn{1}{|c||}{\cellcolor{lightgray}$9$} & 0.969 & 0.984 & \multicolumn{1}{c||}{0.991} & \multicolumn{1}{c|}{0.977} & 0.989 & \multicolumn{1}{c||}{0.994} & \multicolumn{1}{c|}{0.974} & 0.982 & \multicolumn{1}{c||}{0.990} & \multicolumn{1}{c|}{0.964} & 0.983 & 0.991 \\ \hline
\multicolumn{1}{|c||}{\cellcolor{lightgray}$10$}& 0.970 & 0.984 & \multicolumn{1}{c||}{0.991} & \multicolumn{1}{c|}{0.977} & 0.989 & \multicolumn{1}{c||}{0.994} & \multicolumn{1}{c|}{0.975} & 0.983 & \multicolumn{1}{c||}{0.991} & \multicolumn{1}{c|}{0.966} & 0.983 & 0.991 \\ \hline\hline
\multicolumn{1}{|c||}{\cellcolor{lightgray}\small \textrm{expected}}   & 0.10  & 0.10  & \multicolumn{1}{c||}{0.10}  & \multicolumn{1}{c|}{0.25} & 0.25  & \multicolumn{1}{c||}{0.25}  & \multicolumn{1}{c|}{0.50}  & 0.50  & \multicolumn{1}{c||}{0.50}  & \multicolumn{1}{c|}{1.00}  & 1.00  & 1.00 \\ \hline
\end{tabular}\vspace{-2mm}
 	\caption{Experimental order of convergence: $\texttt{EOC}_k(e_{F,k})$,~${k=5,\dots,10}$.} 
 	\label{tab3}
 \end{table}\vspace{-5mm}

\begin{table}[H]
     \setlength\tabcolsep{3pt}
 	\centering
 	\begin{tabular}{c |c|c|c|c|c|c|c|c|c|c|c|c|} 
 	\hline 
    \multicolumn{1}{|c||}{\cellcolor{lightgray}$\varepsilon$}	
    & \multicolumn{12}{c|}{\cellcolor{lightgray}$0.5$}\\\hline 
    \multicolumn{1}{|c||}{\cellcolor{lightgray}$\alpha$}	
    & \multicolumn{3}{c||}{\cellcolor{lightgray}$0.1$} & \multicolumn{3}{c||}{\cellcolor{lightgray}$0.25$} & \multicolumn{3}{c||}{\cellcolor{lightgray}$0.5$} & \multicolumn{3}{c|}{\cellcolor{lightgray}$1.0$}\\\hline 
\multicolumn{1}{|c||}{\cellcolor{lightgray}\diagbox[height=1.1\line,width=0.11\dimexpr\linewidth]{\vspace{-0.6mm}$k$}{\\[-5mm] $p^-$}}
& \cellcolor{lightgray}1.5
& \cellcolor{lightgray}2.0 &  \multicolumn{1}{c||}{\cellcolor{lightgray}2.5}  &  \multicolumn{1}{c|}{\cellcolor{lightgray}1.5} &\cellcolor{lightgray}2.0  & \multicolumn{1}{c||}{\cellcolor{lightgray}2.5}  &  \multicolumn{1}{c|}{\cellcolor{lightgray}1.5} & \cellcolor{lightgray}2.0  &  \multicolumn{1}{c||}{\cellcolor{lightgray}2.5}   &  \multicolumn{1}{c|}{\cellcolor{lightgray}1.5} & \cellcolor{lightgray}2.0    & \cellcolor{lightgray}2.5   \\ \hline\hline
\multicolumn{1}{|c||}{\cellcolor{lightgray}$5$} & 0.959 & 0.979 & \multicolumn{1}{c||}{0.986} & \multicolumn{1}{c|}{0.953} & 0.978 & \multicolumn{1}{c||}{0.986} & \multicolumn{1}{c|}{0.947} & 0.978 & \multicolumn{1}{c||}{0.986} & \multicolumn{1}{c|}{0.943} & 0.979 & 0.988 \\ \hline
\multicolumn{1}{|c||}{\cellcolor{lightgray}$6$} & 0.965 & 0.981 & \multicolumn{1}{c||}{0.989} & \multicolumn{1}{c|}{0.960} & 0.980 & \multicolumn{1}{c||}{0.989} & \multicolumn{1}{c|}{0.956} & 0.979 & \multicolumn{1}{c||}{0.989} & \multicolumn{1}{c|}{0.958} & 0.980 & 0.989 \\ \hline
\multicolumn{1}{|c||}{\cellcolor{lightgray}$7$} & 0.966 & 0.983 & \multicolumn{1}{c||}{0.990} & \multicolumn{1}{c|}{0.961} & 0.981 & \multicolumn{1}{c||}{0.990} & \multicolumn{1}{c|}{0.958} & 0.981 & \multicolumn{1}{c||}{0.990} & \multicolumn{1}{c|}{0.958} & 0.981 & 0.990 \\ \hline
\multicolumn{1}{|c||}{\cellcolor{lightgray}$8$} & 0.968 & 0.983 & \multicolumn{1}{c||}{0.991} & \multicolumn{1}{c|}{0.963} & 0.982 & \multicolumn{1}{c||}{0.990} & \multicolumn{1}{c|}{0.961} & 0.982 & \multicolumn{1}{c||}{0.990} & \multicolumn{1}{c|}{0.962} & 0.982 & 0.990 \\ \hline
\multicolumn{1}{|c||}{\cellcolor{lightgray}$9$} & 0.969 & 0.984 & \multicolumn{1}{c||}{0.991} & \multicolumn{1}{c|}{0.964} & 0.983 & \multicolumn{1}{c||}{0.991} & \multicolumn{1}{c|}{0.963} & 0.982 & \multicolumn{1}{c||}{0.990} & \multicolumn{1}{c|}{0.964} & 0.983 & 0.991 \\ \hline
\multicolumn{1}{|c||}{\cellcolor{lightgray}$10$}& 0.970 & 0.984 & \multicolumn{1}{c||}{0.991} & \multicolumn{1}{c|}{0.966} & 0.983 & \multicolumn{1}{c||}{0.991} & \multicolumn{1}{c|}{0.965} & 0.983 & \multicolumn{1}{c||}{0.991} & \multicolumn{1}{c|}{0.966} & 0.983 & 0.991 \\ \hline\hline
\multicolumn{1}{|c||}{\cellcolor{lightgray}\small \textrm{expected}}   & 0.10  & 0.10  & \multicolumn{1}{c||}{0.10}  & \multicolumn{1}{c|}{0.25} & 0.25  & \multicolumn{1}{c||}{0.25}  & \multicolumn{1}{c|}{0.50}  & 0.50  & \multicolumn{1}{c||}{0.50}  & \multicolumn{1}{c|}{1.00}  & 1.00  & 1.00 \\ \hline
\end{tabular}\vspace{-2mm}
 	\caption{Experimental order of convergence: $\texttt{EOC}_k(e_{F^*,k})$,~${k=5,\dots,10}$.} 
 	\label{tab4}
 \end{table}\vspace{-5mm}

\appendix

\section{Discrete-to-continuous-and-vice-versa inequalities}

 \hspace{5mm}The following lemma is of crucial importance for the hereinafter analysis; it bounds the error resulting from switching from $\AAA_h\colon\Omega\times\mathbb{R}^d\to \mathbb{R}^d$, $h>0$, to $\AAA\colon\Omega\times\mathbb{R}^d\to \mathbb{R}^d$ or from~switching~from
 $F_h\colon\Omega\times\mathbb{R}^d\to \mathbb{R}^d$, $h>0$, to $F\colon\Omega\times\mathbb{R}^d\to \mathbb{R}^d$ and vice versa, respectively.\enlargethispage{5mm}
 
 \begin{proposition}\label{lem:A-Ah}
 Let $p\in C^0(\overline{\Omega})$ with $p^->1$ and let $\delta\ge 0$.   Then, there exists~some~${s>1}$,~which can chosen to be close to $0$ if $h_T>0$ is close to $0$, such that
  for every $T\in \mathcal{T}_h$, $g\in L^{p'(\cdot)s}(T)$, $v\in W^{1,p(\cdot)s}(T)$, and $\lambda\in [0,1]$, it holds that
 \begin{align}
  \| F_h(\cdot,\nabla v)-F(\cdot,\nabla v)\|_{2,T}^2&\lesssim  \|\omega_{p,T}(h_T)^{2}\,(1+\vert \nabla v\vert^{p(\cdot)s})\|_{1,T}
  \,,\label{eq:Fh-F}\\
  \| F_h^*(\cdot,\AAA_h(\cdot,\nabla v))-F^*_h(\cdot,\AAA(\cdot,\nabla v))\|_{2,T}^2&\lesssim \|\omega_{p,T}(h_T)^{2}\,(1+\vert \nabla v\vert^{p(\cdot)s})\|_{1,T}
  \,,\label{eq:Ah-A}\\
  \rho_{((\varphi_h)_{\vert \nabla v \vert})^*,T}(\lambda\,g) &\lesssim \rho_{(\varphi_{\vert \nabla v \vert})^*,T}(\lambda\,g)\label{eq:phih-phi}\\&\quad+\smash{\lambda^{\smash{{2\wedge (p^+)'}}}}\,\|\omega_{p,T}(h_T)\,(1\hspace{-0.05em}+\hspace{-0.05em}\vert \nabla v\vert^{p(\cdot)s}\hspace{-0.05em}+\hspace{-0.05em}\vert g\vert^{p'(\cdot)s})\|_{1,T}\,,\notag
 \end{align} 
  where the hidden constants in $\lesssim$ also depend on $s>1$ and the chunkiness $\omega_0>0$.  
 \end{proposition}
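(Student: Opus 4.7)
The plan is to reduce all three inequalities to a single pointwise bound quantifying how much $t \mapsto (\delta+t)^q$ changes when $q$ varies by $\omega_{p,T}(h_T)$. Writing $(\delta+t)^q = e^{q\ln(\delta+t)}$ and applying the mean value theorem yields, for every $t \ge 0$ and $\alpha, \beta \in \mathbb{R}$,
\[
\bigl|(\delta+t)^\alpha - (\delta+t)^\beta\bigr| \le |\alpha - \beta|\,|\ln(\delta+t)|\,\max\bigl\{(\delta+t)^\alpha, (\delta+t)^\beta\bigr\}.
\]
Combined with the absorption $|\ln(\delta+t)|^2 \lesssim_\eta (\delta+t)^\eta + (\delta+t)^{-\eta}$, valid for any $\eta > 0$ (with constant blowing up as $\eta \to 0$), this trades logarithmic factors for an arbitrarily small power. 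Since $|p(\xi_T) - p(x)| \le \omega_{p,T}(h_T)$ on $T$, all exponent changes carry this factor; picking $\eta = \eta(h_T) \to 0$ as $h_T \to 0$ in concert with $\omega_{p,T}(h_T) \to 0$ supplies the exponent $s = 1 + O(\eta) > 1$, which stays close to $1$ for small $h_T$ while keeping hidden constants bounded.

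For \eqref{eq:Fh-F}, I would apply the MVT bound above with $\alpha = (p(\xi_T)-2)/2$ and $\beta = (p(x)-2)/2$ to the factorization
\[
F_h(x, a) - F(x, a) = a\,\bigl[(\delta+|a|)^{(p(\xi_T)-2)/2} - (\delta+|a|)^{(p(x)-2)/2}\bigr],
\]
obtaining the pointwise bound $|F_h(x,a) - F(x,a)|^2 \lesssim \omega_{p,T}(h_T)^2\, |\ln(\delta+|a|)|^2\, |a|^2 \,(\delta+|a|)^{\max(p(x),p(\xi_T)) - 2}$. Log-absorption then yields $\omega_{p,T}(h_T)^2 (1 + |a|^{p(x) s})$, and substituting $a = \nabla v(x)$ and integrating over $T$ closes the estimate.

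For \eqref{eq:Ah-A}, I would combine \eqref{eq:hammerf} and \eqref{eq:hammerg}, applied at the point $\xi_T$ with $w = \AAA(\xi_T, a)$ and $\tilde{w} = \AAA(x, a)$, to reduce the left-hand side to $(\varphi_{|a|})^*(\xi_T, |\AAA(\xi_T, a) - \AAA(x, a)|)$. The MVT bound applied to $q \mapsto (\delta+|a|)^{q-2} a$ then controls $|\AAA(\xi_T, a) - \AAA(x, a)|$; since this difference is $\omega_{p,T}(h_T)$-small relative to $|\AAA(\xi_T, a)|$, we are placed in the quadratic regime $(\varphi_{|a|})^*(\xi_T, t) \sim (\delta+|a|)^{2-p(\xi_T)} t^2$ of Remark \ref{rem:phi_a}, and the same log-absorption procedure as in \eqref{eq:Fh-F} closes the estimate.

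For \eqref{eq:phih-phi}, conjugation in the second argument commutes with the element-wise freeze of $p$, so $((\varphi_h)_{|\nabla v|})^*(x, t) = (\varphi_{|\nabla v|})^*(\xi_T, t)$ on $T$, and the task reduces to comparing $(\varphi_{|\nabla v|})^*(\xi_T, \lambda g)$ with $(\varphi_{|\nabla v|})^*(x, \lambda g)$. Via Remark \ref{rem:phi_a}, I would split $T$ into the regime $\lambda |g| \lesssim (\delta + |\nabla v|)^{p(x)-1}$ (where $(\varphi_{|\nabla v|})^* \sim (\delta+|\nabla v|)^{2-p(x)} (\lambda g)^2$, yielding prefactor $\lambda^2$) and the regime $\lambda|g| \gtrsim (\delta+|\nabla v|)^{p(x)-1}$ (where $(\varphi_{|\nabla v|})^* \sim (\lambda g)^{p'(x)}$, yielding prefactor $\lambda^{p'(\xi_T)} \le \lambda^{(p^+)'}$ since $\lambda \le 1$). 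In each regime the MVT bound applied directly to the relevant power of $t$ gives an $\omega_{p,T}(h_T)$-scaled oscillation term --- now first-order in $\omega_{p,T}(h_T)$, since we compare function values rather than square differences. Since $\lambda^2, \lambda^{p'(\xi_T)} \le \lambda^{2 \wedge (p^+)'}$ for $\lambda \in [0, 1]$, the two regimes jointly produce the claimed $\lambda^{2 \wedge (p^+)'}$ prefactor. The main obstacle I anticipate is the coordinated choice of $\eta$ with $\omega_{p,T}(h_T)$ so that hidden constants remain uniform in $h_T$ while $s > 1$ stays close to $1$; this is especially delicate in \eqref{eq:phih-phi}, where two varying exponents and the shift base $(\delta + |\nabla v|)^{p(x)-1}$ all change across $T$, and the regime transition must be tracked carefully to preserve the $\lambda^{2 \wedge (p^+)'}$ factor.
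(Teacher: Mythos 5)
Your overall strategy coincides with the paper's: reduce each estimate to a pointwise inequality in which only the exponent varies, control the exponent variation by the mean-value/Newton--Leibniz formula, and absorb the resulting $\ln(\delta+t)$ factors and the small powers $(\delta+t)^{\pm\vert p(x)-p(\xi_T)\vert}$ into the margin $s>1$, before integrating over $T$ using $\vert p(x)-p(\xi_T)\vert\le\omega_{p,T}(h_T)$; your treatment of \eqref{eq:Fh-F} is exactly this, and your regime splitting for \eqref{eq:phih-phi} (quadratic regime versus $p'$-power regime, extracting $\lambda^{2}$ resp.\ $\lambda^{p'(\xi_T)}\le\lambda^{(p^+)'}$) is a mild variant of the paper's computation, which instead differentiates explicitly in both the shift exponent and the outer exponent and reaches the same $\lambda^{2\wedge(p^+)'}$ prefactor at first order in $\omega_{p,T}(h_T)$.

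The genuine gap is in \eqref{eq:Ah-A}. The assertion that the difference $\AAA(\xi_T,a)-\AAA(x,a)$ is ``$\omega_{p,T}(h_T)$-small relative to $\vert\AAA(\xi_T,a)\vert$, hence we are in the quadratic regime of $(\varphi_{\vert a\vert})^*(\xi_T,\cdot)$'' is false pointwise: the mean-value bound gives $\vert\AAA(\xi_T,a)-\AAA(x,a)\vert\lesssim \omega_{p,T}(h_T)\,\vert\ln(\delta+\vert a\vert)\vert\,\bigl(1+(\delta+\vert a\vert)^{\vert p(x)-p(\xi_T)\vert}\bigr)\varphi'(\xi_T,\vert a\vert)$, and the factor $\vert\ln(\delta+\vert a\vert)\vert(\delta+\vert a\vert)^{\vert p(x)-p(\xi_T)\vert}$ is unbounded in $\vert a\vert$, so the argument of $(\varphi_{\vert a\vert})^*(\xi_T,\cdot)$ can leave the regime $t\lesssim(\delta+\vert a\vert)^{p(\xi_T)-1}$. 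In the complementary regime $(\varphi_{\vert a\vert})^*(\xi_T,t)\sim t^{p'(\xi_T)}$, and a naive estimate only produces the factor $\omega_{p,T}(h_T)^{p'(\xi_T)}$, which for $p(\xi_T)>2$ is strictly weaker than the required $\omega_{p,T}(h_T)^{2}$. To rescue the argument you must either exploit the compensation that in this regime the log/power factor is necessarily $\gtrsim \omega_{p,T}(h_T)^{-1}$ (which you never state), or proceed as the paper does: keep the genuinely small factor $\lambda=\vert p(x)-p(\xi_T)\vert\le 1$ inside, pull the unbounded factor out raised to $\max\{2,p'(x)\}$ via \eqref{estimate} and the $\Delta_2$-property, use $(\varphi_{\vert a\vert})^*(x,\lambda\,\varphi'(x,\vert a\vert))\sim\lambda^2\,\varphi(x,\vert a\vert)$, and only then absorb the pulled-out factor into $(1+\vert a\vert^{p(x)s})$. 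A further, minor, point: you cannot choose $\eta=\eta(h_T)\to0$ ``while keeping hidden constants bounded,'' since the constant in $\vert\ln(\delta+t)\vert^2\lesssim_\eta(\delta+t)^\eta+(\delta+t)^{-\eta}$ blows up as $\eta\to0$; the correct bookkeeping, reflected in the statement, is to fix $s>1$, let the hidden constant depend on $s$, and use the smallness of $\omega_{p,T}(h_T)$ only to ensure that the exponent oscillation fits within the margin $s-1$.
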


Proposition \ref{lem:A-Ah} is based on the following point-wise estimates.\vspace{-1mm}\enlargethispage{5.5mm}

 \begin{lemma}\label{lem:Ax-Axh}
  Let $p\in C^0(\overline{\Omega})$ with $p^->1$ and let $\delta\ge 0$. 
  Then, there exists some $s>1$, which can chosen to be close to $1$ if $\vert x-y\vert$ is close to $0$,  such that 
  for every $x,y\in \overline{\Omega}$, $t\ge 0$, $a,b\in \mathbb{R}^d$, and $\lambda\in [0,1]$, it holds that
 \begin{align}
  \vert F(x,a)-F(y,a)\vert^2&\lesssim \vert p(x)-p(y)\vert^2\,(1 +\vert a\vert^{p(x)s})\,,\label{eq:Fxh-Fx}\\
   \vert F^*(x,a)-F^*(y,a)\vert^2&\lesssim \vert p(x)-p(y)\vert^2\,(1 +\vert a\vert^{p(x)s})\,,\label{eq:F*xh-F*x}\\
 \vert F^*(x,\AAA(x,a))-F^*(x,\AAA(y,a))\vert^2&\lesssim\vert p(x)-p(y)\vert^2\,(1 +\vert a\vert^{p(x)s}) \,,\label{eq:Axh-Ax}\\
  \begin{split}
  (\varphi_{\vert b\vert})^*(x,\lambda\,t) &\lesssim (\varphi_{\vert b\vert})^*(y,\lambda\,t)\\[-0.5mm]&\quad+
  \smash{\lambda^{\smash{{2\wedge (p^+)'}}}}\,\vert p(x)-p(y)\vert \,(1+\vert b\vert^{p(y)s}+t^{p'(y)s})\,,
  \end{split}\label{eq:phixh-phix}
 \end{align}
 where the hidden constants in $\lesssim$ also depend on $s>1$ and the chunkiness $\omega_0>0$.
 \end{lemma}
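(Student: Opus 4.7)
The whole lemma rests on one elementary inequality: for every small parameter $s_0>0$, there exists $C_{s_0}>0$ with
\begin{equation*}
\lvert \mu^{\sigma_1}-\mu^{\sigma_2}\rvert
\le C_{s_0}\,\lvert \sigma_1-\sigma_2\rvert
\bigl(\mu^{\max\{\sigma_1,\sigma_2\}+s_0}+\mu^{\min\{\sigma_1,\sigma_2\}-s_0}\bigr)
\qquad\text{for all }\mu>0,\; \sigma_1,\sigma_2\in\mathbb{R},
\end{equation*}
which follows by writing $\mu^{\sigma_1}-\mu^{\sigma_2}=\int_{\sigma_2}^{\sigma_1}\mu^{\tau}\ln\mu\,\mathrm{d}\tau$ and using $\lvert \ln\mu\rvert\lesssim_{s_0}\mu^{s_0}+\mu^{-s_0}$. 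Because $p\in C^0(\overline{\Omega})$ and $p^->1$, the quantity $\lvert p(x)-p(y)\rvert$ may be made arbitrarily small (independently of the bounds $p^-,p^+$) once $\lvert x-y\rvert$ is small, so $s=1+s_0$ can be taken close to~$1$ for all four estimates without spoiling constants.

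For \eqref{eq:Fxh-Fx} I would write $F(x,a)-F(y,a)=\bigl((\delta+\lvert a\rvert)^{(p(x)-2)/2}-(\delta+\lvert a\rvert)^{(p(y)-2)/2}\bigr)a$, apply the above inequality with $\mu=\delta+\lvert a\rvert$, $\sigma_i=(p(x_i)-2)/2$, bound $\lvert a\rvert\le \delta+\lvert a\rvert$, and verify that the resulting exponents of $\delta+\lvert a\rvert$ lie in $[\,{-\mathrm{const}\,(s-1)}\,,\,p^{+}+\mathrm{const}\,(s-1)\,]$, so that the right-hand side is controlled by $1+\lvert a\rvert^{p(x)s}$ for $s>1$ sufficiently close to~$1$ (splitting the two regimes $\delta+\lvert a\rvert\le 1$ and $\delta+\lvert a\rvert>1$). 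For \eqref{eq:F*xh-F*x}, the same strategy applies, but with the additional twist that the inner shift $\delta^{p(x)-1}$ itself depends on $x$; I would split
\begin{equation*}
F^{*}(x,a)-F^{*}(y,a)=\bigl[(\delta^{p(x)-1}+\lvert a\rvert)^{(p'(x)-2)/2}-(\delta^{p(y)-1}+\lvert a\rvert)^{(p'(x)-2)/2}\bigr]a+\bigl[(\delta^{p(y)-1}+\lvert a\rvert)^{(p'(x)-2)/2}-(\delta^{p(y)-1}+\lvert a\rvert)^{(p'(y)-2)/2}\bigr]a,
\end{equation*}
treating the first difference with a mean-value step in the base (using $\lvert \delta^{p(x)-1}-\delta^{p(y)-1}\rvert\lesssim\lvert p(x)-p(y)\rvert\,(\delta^{p(x)-1}+\delta^{p(y)-1})\,\lvert\ln\delta\rvert$ and the monotonicity of $t\mapsto (t+\lvert a\rvert)^{(p'(x)-2)/2}$) and the second with the displayed log-to-power inequality in the exponent.

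For \eqref{eq:Axh-Ax} I would invoke \eqref{eq:hammerh} with $b=a$ to reduce the claim to the estimate $(\varphi_{\lvert a\rvert})^{*}(x,\lvert \AAA(x,a)-\AAA(y,a)\rvert)\lesssim\lvert p(x)-p(y)\rvert^{2}(1+\lvert a\rvert^{p(x)s})$. Proceeding as in \eqref{eq:Fxh-Fx} but with $\sigma_i=p(x_i)-2$ bounds $\lvert\AAA(x,a)-\AAA(y,a)\rvert\lesssim\lvert p(x)-p(y)\rvert\bigl((\delta+\lvert a\rvert)^{p(\bar x)-1+s_0}+(\delta+\lvert a\rvert)^{p(\bar x)-1-s_0}\bigr)$. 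Substituting this into the representation $(\varphi_{\lvert a\rvert})^{*}(x,t)\sim((\delta+\lvert a\rvert)^{p(x)-1}+t)^{p'(x)-2}t^2$, using $t\le (\delta+\lvert a\rvert)^{p(x)-1+O(s_0)}$ so the bracket is equivalent to $(\delta+\lvert a\rvert)^{p(x)-1+O(s_0)}$, and finally applying $(p(x)-1)(p'(x)-2)=2-p(x)$ collapses the exponent arithmetic to the correct $(1+\lvert a\rvert^{p(x)s})$ after possibly enlarging~$s$. Finally, for \eqref{eq:phixh-phix}, I would start from $(\varphi_{\lvert b\rvert})^{*}(x,\lambda t)\sim((\delta+\lvert b\rvert)^{p(x)-1}+\lambda t)^{p'(x)-2}(\lambda t)^{2}$ and split the $x\rightsquigarrow y$ change once in the base shift $(\delta+\lvert b\rvert)^{p(x)-1}\rightsquigarrow (\delta+\lvert b\rvert)^{p(y)-1}$ and once in the outer exponent $p'(x)-2\rightsquigarrow p'(y)-2$, applying the master inequality twice; the factor $\lambda^{2\wedge(p^{+})'}$ emerges from the fact that $(\varphi_{\lvert b\rvert})^{*}(x,\cdot)$ is of type $(p^{-})'\le p'(x)\le (p^{+})'$, so rescaling by $\lambda\in[0,1]$ contributes exactly a $\lambda^{2\wedge(p^{+})'}$ prefactor.

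The main obstacle is the bookkeeping of exponents in \eqref{eq:phixh-phix} and \eqref{eq:Axh-Ax}: one must ensure that every stray $(s-1)$-shift in a power of $\delta+\lvert a\rvert$ or $\delta+\lvert b\rvert$, and every log-generated factor, is absorbed into a single clean bound of the form $1+\lvert a\rvert^{p(x)s}+t^{p'(y)s}$. The critical case is $\delta+\lvert a\rvert\le 1$ (particularly with $\delta=0$), because there $\lvert\ln(\delta+\lvert a\rvert)\rvert$ can be large negative; the inequality $\lvert \ln\mu\rvert\lesssim_{s_0}\mu^{s_0}+\mu^{-s_0}$ is sharp enough to absorb this into a factor that, after pairing with the $\mu^{p(\bar x)-2}\lvert a\rvert^{2}$ weight, stays bounded provided $s<1+p^-/2$. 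Once this is verified, the splittings in \eqref{eq:F*xh-F*x}, \eqref{eq:Axh-Ax}, \eqref{eq:phixh-phix} proceed by direct application of the estimate established in \eqref{eq:Fxh-Fx}.
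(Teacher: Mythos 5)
Your plan is correct and follows essentially the same route as the paper: a Newton--Leibniz (mean-value) argument in the exponent combined with the absorption $\vert\ln\mu\vert\lesssim_{s_0}\mu^{s_0}+\mu^{-s_0}$ (which is exactly where the requirement $s>1$ close to $1$ originates), the same two-way splitting into a base-shift change and an outer-exponent change for \eqref{eq:F*xh-F*x} and \eqref{eq:phixh-phix}, and the same reduction of \eqref{eq:Axh-Ax} via \eqref{eq:hammerf}--\eqref{eq:hammerg} to bounding $(\varphi_{\vert a\vert})^*(x,\vert\AAA(x,a)-\AAA(y,a)\vert)$ after a Newton--Leibniz estimate for $\vert\AAA(x,a)-\AAA(y,a)\vert$. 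The only cosmetic difference is that the paper extracts the factor $\vert p(x)-p(y)\vert^2$ and the $\lambda$-prefactor through the shifted-$N$-function scaling facts of \cite[Lem.\ A.7, Lem.\ A.8]{BDS15}, whereas you carry out the exponent bookkeeping by hand from the representation $(\varphi_{\vert a\vert})^*(x,t)\sim((\delta+\vert a\vert)^{p(x)-1}+t)^{p'(x)-2}t^2$.
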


 \begin{proof}\let\qed\relax
    \textit{ad \eqref{eq:Fxh-Fx}.}
 The Newton--Leibniz formula yields~for~all~${x,y\in \overline{\Omega}}$~and~${a\in \mathbb{R}^d}$~that
  \begin{align}\label{eq:Fh-F.1}
  \begin{aligned}
\vert F(x,a)-F(y,a)\vert^2&\lesssim \vert p(x)-p(y)\vert^2\,\ln(\delta +\vert a\vert)^2\,(
 \varphi(x,\vert a\vert )+\varphi(y,\vert a\vert ))
 \\&\lesssim 
  \vert p(x)-p(y)\vert^2\,\ln(\delta +\vert a\vert)^2\,(1+(\delta +\vert a\vert)^{p(y)-p(x)})\,\varphi(x,\vert a\vert) 
  \\&\lesssim \vert p(x)-p(y)\vert^2\,(1 +\vert a\vert^{p(x)s})\,,
  \end{aligned}
  \end{align} 
 for some $s>1$, which can chosen to be close to $1$ if $\vert x-y\vert$ is close to $0$.
 
  \textit{ad \eqref{eq:Axh-Ax}.} The Newton--Leibniz formula yields~for~all~${x,y\in  \overline{\Omega}}$~and~${a\in \mathbb{R}^d}$ that
  \begin{align}\label{eq:Ah-A.1}
  \begin{aligned}
 \vert \AAA(x,a)-\AAA(y,a)\vert&\lesssim \vert p(x)-p(y)\vert\,\vert\ln(\delta +\vert a\vert)\vert\,(
 \varphi'(x,\vert a \vert)+\varphi'(y,\vert a\vert))\\&\lesssim 
 \vert p(x)-p(y)\vert\, \vert \ln(\delta +\vert a\vert)\vert\, (1+(\delta +\vert a\vert)^{p(y)-p(x)})\, \varphi'(x,\vert a\vert)\,.
 \end{aligned}
  \end{align}
  Using \eqref{eq:Ah-A.1}, \eqref{eq:hammerf},  \eqref{eq:hammerg}, the monotonicity of $(\varphi_{\vert a\vert })^*(x,\cdot)$, that 
  $\smash{\Delta_2((\varphi_{\vert a\vert })^*(x,\cdot))\lesssim 2^{\max\{2,(p^-)'\}}}$, (\textit{cf}.\ Remark \ref{rem:phi_a}),\vspace{-1.5mm}
  \begin{align}
      (\varphi_{\vert a\vert })^*(x,\lambda\, t)&\lesssim \smash{\max\{\lambda^{p'(x)},\lambda^2\}} (\varphi_{\vert a\vert })^*(x, t)\,,\label{estimate}\\
      (\varphi_{\vert a\vert})^*(x,\lambda\,\varphi'(x,\vert a\vert))&\sim \smash{\lambda^2}\, \varphi(x,\vert a\vert)\,,
  \end{align}
  (\textit{cf}.\ \cite[Lem.\  A.7 \& Lem.\  A.8]{BDS15}), in conjunction with Remark \ref{rem:uniform}, we deduce that
  \begin{align*}
 \vert F^*(x,\,&\AAA(x,a))-F^*(x,\AAA(y,a))\vert^2\lesssim (\varphi_{\vert a\vert })^*(x,\vert \AAA(x,a)-\AAA(y,a)\vert ) 
  \\&\lesssim 
  ((1+\vert \ln(\delta +\vert a\vert)\vert)\, (1+(\delta +\vert a\vert)^{p(y)-p(x)}))^{\max\{2,p'(x)\}}\,(\varphi_{\vert a\vert })^*(x,\vert p(x)-p(y)\vert\,\varphi'(x,\vert a\vert))
  \\&\lesssim 
  \vert p(x)-p(y)\vert^2\,((1+\vert \ln(\delta +\vert a\vert)\vert) \,(1+(\delta +\vert a\vert)^{p(y)-p(x)}))^{\max\{2,p'(x)\}}\,\varphi(x,\vert a\vert)
  \\&\lesssim \vert p(x)-p(y)\vert^2\,(1 +\vert a\vert^{p(x)s})\,,
 \end{align*}
 for some $s>1$, which can chosen to be close to $1$ if $\vert x-y\vert$ is close to $0$.\enlargethispage{2.5mm}

  \textit{ad \eqref{eq:phixh-phix}.} 
  Using twice the Newton--Leibniz formula yields for all  $x,y\in  \overline{\Omega}$ and $a\in \mathbb{R}^d$ that\enlargethispage{5mm}
  \begin{align}\label{eq:phih-phi.1}
  \begin{aligned}
  &(\varphi_{\vert b\vert})^*(x,\lambda\, t) 
  \lesssim ((\delta+\vert b\vert)^{p(x)-1}+\lambda\,t)^{p'(x)-2}(\lambda\,t)^2
  \\& = ((\delta+\vert b\vert)^{p(y)-1}+\lambda\,t)^{p'(x)-2}(\lambda\,t)^2
  \\&\quad+ \int_{p(x)\wedge p(y)}^{p(x)\vee p(y)}{(p'(x)-2)(\delta+\vert b\vert)^{r-1}\ln(\delta+\vert b\vert)((\delta+\vert b\vert)^{r-1}+\lambda\,t)^{p'(x)-3}(\lambda\,t)^2\,\textrm{d}r} 
  \\&=((\delta+\vert b\vert)^{p(y)-1}+\lambda\,t)^{p'(y)-2}(\lambda\,t)^2
 \\&\quad+ \int_{p(x)\wedge p(y)}^{p(x)\vee p(y)}{(p'(x)-2)(\delta+\vert b\vert)^{r-1}\ln(\delta+\vert b\vert)((\delta+\vert b\vert)^{r-1}+\lambda\,t)^{p'(x)-3}(\lambda\,t)^2\,\textrm{d}r}
  \\&\quad+\int_{p'(x)\wedge p'(y)}^{p'(x)\vee p'(y)}{\ln((\delta+\vert b\vert)^{p(y)-1}+\lambda\,t)((\delta+\vert b\vert)^{p(y)-1}+\lambda\,t)^{r-2}(\lambda\,t)^2\,\textrm{d}r}\,. 
  \\&\eqqcolon ((\delta+\vert b\vert)^{p(y)-1}+\lambda\,t)^{p'(y)-2}(\lambda\,t)^2+I_h^1+I_h^2\,.
  \end{aligned}\hspace{-1cm}
  \end{align}
  Next, we need to estimate the terms $I_h^1$ and $I_h^2$:

  \textit{ad $I_h^1$.} Using $((\delta+\vert b\vert)^{r-1}+\lambda\,t)^{p'(x)-2}(\lambda\,t)^2\lesssim \lambda^{\min\{2,p'(x)\}} ((\delta+\vert b\vert)^{r-1}+t)^{p'(x)-2}t^2$,~we~obtain
  \begin{align}\label{eq:phih-phi.2}
  \begin{aligned}
 \hspace{-1mm}I_h^1&\lesssim \vert \ln(\delta+\vert b\vert)\vert \int_{p(x)\wedge p(y)}^{p(x)\vee p(y)}{((\delta+\vert b\vert)^{r-1}+\lambda\,t)^{p'(x)-2}(\lambda\,t)^2\,\textrm{d}r} 
  \\&\lesssim
  \lambda^{\smash{2\wedge p'(x)}}\,\vert p(x)-p(y)\vert\,\vert \ln(\delta+\vert b\vert)\vert \,((\delta+\vert b\vert)^{p(x)-1}+(\delta+\vert b\vert)^{p(y)-1}+t)^{p'(x)}
 \\&\lesssim
  \lambda^{\smash{{2\wedge (p^+)'}}}\,\vert p(x)-p(y)\vert\, (1+(\delta+\vert b\vert)^{p(x)-1}+(\delta+\vert b\vert)^{p(y)-1}+t)^{p'(x)s}
  \\&\lesssim
  \lambda^{\smash{{2\wedge (p^+)'}}}\,\vert p(x)-p(y)\vert\, (1+(\delta+\vert b\vert)^{p(x)s}+(\delta+\vert b\vert)^{p(y)(p'(x)/p'(y))s}+t^{p'(y)(p'(x)/p'(y))s})
  \\&\lesssim
  \lambda^{\smash{{2\wedge (p^+)'}}}\, \vert p(x)-p(y)\vert\,(1+\vert b\vert^{p(y)s}+t^{p'(y)s})\,,
  \end{aligned}\hspace{-2.5mm}
  \end{align}
  for some $s>1$, which can chosen to be close to $1$ if $\vert x-y\vert$ is close to $0$.
  
  \textit{ad $I_h^2$.} Using $((\delta+\vert b\vert)^{p(y)-1}+\lambda\,t)^{r-2}(\lambda\,t)^2\lesssim \lambda^{\min\{2,r\}} ((\delta+\vert b\vert)^{p(y)-1}+t)^{r-2}t^2$,~we~obtain
  \begin{align}
  \label{eq:phih-phi.3}
  \begin{aligned}
  I_h^2&\lesssim \lambda^{{2\wedge (p^+)'}}
  \vert p'(x)-p'(y)\vert\,\vert\ln((\delta+\vert b\vert)^{p(y)-1}+t)\vert\,(1+(\delta+\vert b\vert)^{p(y)-1}+t)^{p'(x)-2}t^2
  \\&\quad+\lambda^{{2\wedge (p^+)'}}\vert p'(x)-p'(y)\vert\,\vert\ln((\delta+\vert b\vert)^{p(y)-1}+t)\vert\,(1+(\delta+\vert b\vert)^{p(y)-1}+t)^{p'(y)-2}t^2\\ 
  &\lesssim \lambda^{\smash{{2\wedge (p^+)'}}}\,\tfrac{\vert p(x)-p(y)\vert}{(p(x)-1)(p(y)-1)}\,\vert\ln((\delta+\vert b\vert)^{p(y)-1}+t)\vert\,(1+(\delta+\vert b\vert)^{p(y)-1}+t)^{p'(x)}
  \\&\quad+\lambda^{\smash{{2\wedge (p^+)'}}}\,\tfrac{\vert p(x)-p(y)\vert}{(p(x)-1)(p(y)-1)}\,\vert\ln((\delta+\vert b\vert)^{p(y)-1}+t)\vert\,(1+(\delta+\vert b\vert)^{p(y)-1}+t)^{p'(y)}
 \\&\lesssim \lambda^{\smash{{2\wedge (p^+)'}}}\,\vert p(x)-p(y)\vert\, \vert\ln((\delta+\vert b\vert)^{p(y)-1}+t)\vert\,(1+(\delta+\vert b\vert)^{p(y)-1}+t)^{p'(y)(p'(x)/p'(y))}
  \\&\quad+\lambda^{\smash{{2\wedge (p^+)'}}}\,\vert p(x)-p(y)\vert\,\vert\ln((\delta+\vert b\vert)^{p(y)-1}+t)\vert\,(1+(\delta+\vert b\vert)^{p(y)-1}+t)^{p'(y)} 
 \\&\lesssim \lambda^{\smash{{2\wedge (p^+)'}}}\,\vert p(x)-p(y)\vert\, (1+\vert b\vert^{p(y)s}+t^{p'(y)s})\,,
  \end{aligned} \hspace{-2.5mm}
  \end{align}
  for some $s>1$, which can chosen to be close to $1$ if $\vert x-y\vert$ is close to $0$.
  
  Eventually, combining \eqref{eq:phih-phi.2} and \eqref{eq:phih-phi.3} in \eqref{eq:phih-phi.1}, appealing to Remark \ref{rem:phi_a}, we conclude~that
  \begin{align*}
  (\varphi_{\vert b\vert})^*(x, \lambda\,t)\lesssim (\varphi_{\vert b\vert})^*(y, \lambda\,t)+\lambda^{{2\wedge (p^+)'}}\,\vert p(x)-p(y)\vert (1+\vert b\vert^{p(y)s}+t^{p'(y)s})\,.
  \end{align*}

  \textit{ad \eqref{eq:F*xh-F*x}.} Using the Newton--Leibniz formula and proceeding as for \eqref{eq:F*xh-F*x} and \eqref{eq:phih-phi.3} yields for all  $x,y\in  \overline{\Omega}$ and $a\in \mathbb{R}^d$ that
  \begin{align*}
      \vert F^*(x,a)-F^*(y,a)\vert^2 &\lesssim  \vert (\delta^{p(x)-1}+\vert a\vert)^{\frac{p'(x)-2}{2}}a-(\delta^{p(x)-1}+\vert a\vert)^{\frac{p'(y)-2}{2}}a\vert^2
      \\&\quad +\int_{p(x)\wedge p(y)}^{p(x)\vee p(y)}{\vert \ln(\delta)\vert(\delta^{r-1}+\vert a\vert)^{p'(x)-2}\vert a\vert^2\,\textrm{d}r}
      \\&\lesssim \vert p(x)-p(y)\vert \,(1+\vert a\vert^{p(y)s})\,.\tag*{$\qedsymbol$}
  \end{align*}
 \end{proof}

 The authors  would like to thank the reviewer for careful reading of our paper, valuable remarks and suggestions.

 {\setlength{\bibsep}{0pt plus 0.0ex}\small

  \providecommand{\bysame}{\leavevmode\hbox to3em{\hrulefill}\thinspace}
  \providecommand{\noopsort}[1]{}
  \providecommand{\mr}[1]{\href{http://www.ams.org/mathscinet-getitem?mr=#1}{MR~#1}}
  \providecommand{\zbl}[1]{\href{http://www.zentralblatt-math.org/zmath/en/search/?q=an:#1}{Zbl~#1}}
  \providecommand{\jfm}[1]{\href{http://www.emis.de/cgi-bin/JFM-item?#1}{JFM~#1}}
  \providecommand{\arxiv}[1]{\href{http://www.arxiv.org/abs/#1}{arXiv~#1}}
  \providecommand{\doi}[1]{\url{https://doi.org/#1}}
  \providecommand{\MR}{\relax\ifhmode\unskip\space\fi MR }
  \providecommand{\MRhref}[2]{%
  	\href{http://www.ams.org/mathscinet-getitem?mr=#1}{#2}
  }
  \providecommand{\href}[2]{#2}

}


\end{document}